\DeclareSymbolFont{cyrletters}{OT2}{wncyr}{m}{n}
\DeclareMathSymbol{\Sha}{\mathalpha}{cyrletters}{"58}
\newtheorem{theoA}{Theorem}
\newtheorem*{coro*}{Corollary}
\newtheorem*{conj*}{Conjecture}
\newtheorem*{lemm*}{Lemma}
\providecommand{\smalltwomat}[4]{\left(\begin{smallmatrix}#1&#2\\#3&#4\end{smallmatrix}\right)}
\theoremstyle{definition}
\theoremstyle{remark}
\newtheorem{remark*}{Remark}
\numberwithin{equation}{subsection}
\newcommand{\nek}{Nekov{\'a}{\v{r}}}
\newcommand{\one}{\mathbf{1}}
\newcommand{\Q}{\mathbf{Q}}
\newcommand{\GL}{\mathbf{GL}}
\newcommand{\X}{\mathscr{X}}
\newcommand{\calE}{\mathscr{E}}
\newcommand{\R}{\mathbf{R}}
\newcommand{\Z}{\mathbf{Z}}
\newcommand{\frakp}{\mathfrak{p}}
\newcommand{\into}{\hookrightarrow}
\newcommand{\Y}{\mathscr{Y}}
\newcommand{\calI}{\mathscr{I}}
\newcommand{{\calG}}{\mathscr{G}}
\newcommand{\C}{\mathbf{C}}
\newcommand{\N}{\mathbf{N}}
\newcommand{\OO}{\mathscr{O}}
\newcommand{\A}{\mathbf{A}}
\newcommand{\bks}{\backslash}
\newcommand{\baar}{\overline}
\newcommand{\eps}{\varepsilon}
\newcommand{\vpi}{\varpi}
\newcommand{\wtil}{\widetilde}
\newcommand{\ord}{\mathrm{ord}}
\newcommand{\Gal}{\mathrm{Gal}}
\newcommand{\Pic}{\mathrm{Pic}\,}
\newcommand{\Hom}{\mathrm{Hom}\,}
\newcommand{\End}{\mathrm{End}\,}
\newcommand{\llb}{\llbracket}
\newcommand{\rrb}{\rrbracket}
   \def\XXint#1#2#3{{\setbox0=\hbox{$#1{#2#3}{\int}$}
        \vcenter{\hbox{$#2#3$}}\kern-.5\wd0}}
\title[On the $p$-adic Birch and Swinnerton-Dyer conjecture over number fields]{On the $p$-adic Birch and Swinnerton-Dyer conjecture\\ for elliptic curves over number fields}
\author{Daniel Disegni}
\address{ Department of Mathematics and Statistics\\ McGill University\\
}
\email{daniel.disegni@mcgill.ca}
\begin{document}

\begin{abstract}
We formulate a multi-variable $p$-adic Birch and Swinnerton-Dyer conjecture for $p$-ordinary elliptic curves $A$ over number fields $K$. It generalises  the one-variable conjecture of Mazur--Tate--Teitelbaum, who studied the case $K=\Q$ and the  phenomenon of exceptional zeros.  We discuss old and new theoretical  evidence towards our conjecture and in particular we fully  prove it,  under mild conditions, in the following situation: $K$ is imaginary quadratic, $A=E_{K}$ is the base-change to $K$ of an elliptic curve over the rationals, and the rank of $A $ is either 0 or $1$. 

 The proof is naturally divided into a few cases. Some of them are deduced  from the purely  cyclotomic case of elliptic  curves over $\Q$, which we obtain from a refinement of recent work of Venerucci alongside the results  of Greenberg--Stevens, Perrin-Riou, and the author. 
 The only genuinely multi-variable case (rank 1, two exceptional zeros, three partial derivatives) is newly established here. 
  Its proof generalises to show  that the `almost-anticyclotomic' case of our conjecture is a consequence of  conjectures of Bertolini--Darmon on families of Heegner points,  and of (partly conjectural) $p$-adic Gross--Zagier  and Waldspurger formulas in  families.
\end{abstract}

\maketitle

\tableofcontents

\section{Introduction}
 Let $A/\Q$ be an elliptic curve, $p$ be a prime. Mazur--Tate--Teitelbaum \cite{mtt} observed long ago that if $A$  has split  multiplicative reduction at $p$, the $p$-adic $L$-function $L_{p}(A)$ of $A$ vanishes at its central argument even when the complex $L$-function $L(A,s)$ does not (due to the vanishing of the interpolation factor relating their two values). 
 They went on to   conjecture that $L_{p}(A)$ has order of vanishing   equal to exactly  one more than that of $L(A,s)$, and to give a precise conjectural formula for its leading term. 

The purpose of this work is to formulate a generalisation of their conjecture valid for elliptic curves over number fields, and to provide  evidence in its favour. A salient feature of this generalisation is the presence of several variables.

\medskip

Evidence and motivation for a conjecture are two  sides of the same mathematical coin, with the expository difference that the latter is presented before rather than after. Readers eager to look at the side of motivation may prefer to start by looking at the various formulas at the end of \S\ref{sec: bc}.

\subsection{The  conjecture}\label{sec: the conj}
Fix a rational prime~$p$ and assume throughout this paper that  $A/K$ is an elliptic curve with  ordinary (good or multiplicative) reduction at all primes $\frakp \vert p$ of $K$. The first point of difference to \cite{mtt} is that in general the existence of a $p$-adic $L$-function for $A$ is far from being known. It will therefore be a preliminary hypotheses; we formulate it after introducing some notation.
\subsubsection{$p$-adic $L$-function} Let $\Gamma $ be a  $\Z_{p}$-free quotient of the Galois group of the maximal abelian extension of $K$. Via the reciprocity law of class field theory,   
it is equipped with a surjective homomorphism 
\begin{align}\label{ell}
\ell\colon K^{\times}\bks K_{\A^{\infty}}^{\times}\to \Gamma.
\end{align}
Fix a finite extension $L$ of $\Q_{p}$ containing, for all primes $\frakp\vert p$ of good reduction, a splitting field for  the polynomial $P_{\frakp}(X)=X^{2}-a_{\frakp}X+{\rm N}(\frakp)$, where $a_{\frakp}={\rm N}(\frakp)+1-|A(k_{\frakp})|$ (here $k_{\frakp}$ is the residue field, $N(\frakp)=|k_{\frakp}|$). We let $\alpha_{\frakp}\in L$ be the unique root of $P_{\frakp}(X)$ which is a unit in $\OO_{L}$ if $\frakp $ is a prime of good reduction, and $\alpha_{\frakp}=+1$ (respectively, $\alpha_{\frakp}=-1$)  if $\frakp$ is a prime of split (respectively, non-split) multiplicative reduction. Finally, we let $\Q(\alpha):=\Q((\alpha_{\frakp})_{\frakp\vert p})\subset L$. 

The (hypothetic) $p$-adic $L$-function $L_{p}^{(\Gamma)}(A)$ will be an element of $\OO_{L}\llb \Gamma\rrb\otimes L$, which we also view as the ring $\OO(\Y_{\Gamma})^{\rm b}$ of bounded regular functions on the Cartier dual of $\Gamma$ over $L$: this  is a rigid space  $\Y_{\Gamma}/L$ whose $R$-valued points, for any affinoid $L$-algebra $R$, parametrise continuous characters $\chi\colon \Gamma\to R^{\times}$; accordingly, we will write $L_{p}^{(\Gamma)}(A, \chi)=\chi(L_{p}^{(\Gamma)}(A))\in L(\chi)$ for a character $\chi\in \Y_{\Gamma}(L(\chi))$.   

\begin{enonce*}{Hypothesis}[$L_{p}$]\label{hyp} The complex $L$-function of $A$ and its twists   $L(A, \chi, s)$ by finite characters of $\Gamma$ have analytic continuation to the entire complex plane, and there exists   an element
$$L_{p}^{(\Gamma)}(A)\in \OO_{L}\llb \Gamma\rrb\otimes L$$
satsifying the following property: for each finite extension $\Q(\alpha,\chi)$ of $\Q(\alpha)$, each finite order character $\chi\colon \Gamma\to \Q(\alpha, \chi)^{\times}$, and each pair $(\iota_{\infty},\iota_{p})$ of an embedding  $\iota_{\infty} \colon \Q(\alpha,\chi)\into \C$ and a $\Q(\alpha)$-embedding $\iota_{p}\colon \Q(\alpha, \chi)\into\baar{L}$, we have
$$\iota_{\infty} \iota_{p}^{-1} L_{p}^{(\Gamma)}(A)(\chi)= \prod_{\frakp \vert p}\iota_{\infty}e_{\frakp}(\chi_{\frakp}) \cdot {L(A, \iota_{\infty} \chi, 1)\over |D_{K}|^{-1/2} \Omega_{A}}\qquad \text{in } \iota_{\infty} \Q(\alpha, \chi),$$
where
 $\Omega_{A}$ is the N\'eron period appearing in the Birch and Swinnerton-Dyer conjecture for $A$ \cite{tate-bsd}, $D_{K}$ is the discrimimant of $K$,
    and the local factors $e_{\frakp}(\chi_{\frakp})$ are given as follows:
 \begin{equation*}
e_{\frakp}( \chi_{\frakp})=
\begin{cases}\displaystyle
 {( 1-\alpha_{\frakp} \chi(\frakp))(1-\alpha_{\frakp}'\chi(\frakp)^{-1})} &\text{\ if $\chi_{\frakp}$ is unramified,}\\
\alpha_{\frakp}^{-\mathfrak{f}_{\frakp}} \tau(\chi_{\frakp}) &\text{\ if $\chi_{\frakp}$ is ramified of conductor $\mathfrak{f}_{\frakp}$.}
\end{cases}
\end{equation*}
Here  $\alpha_{\frakp}':=\alpha_{\frakp}^{-1}$ if $E$ has good reduction and $\alpha_{\frakp}':=0$ if $E$ has multiplicative reduction, and  $\tau(\chi_{\frakp})$ is the Gau\ss\ sum $$\tau(\chi_{\frakp}, \psi_{\frakp}):=\sum_{x\in \OO_{K}/\frakp^{\mathfrak{f}_{\frakp}}}\chi_{\frakp}(x)\psi_{\frakp}(x),$$ for some choice\footnote{The function $L_{p}^{(\Gamma)}(A)=L_{p}^{(\Gamma)}(A, \psi_{p})$  then has a mild dependence  on the choice of $\psi_{p}=(\psi_{\frakp})_{\frakp\vert p}$: if $a\in \OO_{K, p}^{\times}$ and $a.\psi_{p}=(a_{\frakp}.\psi_{\frakp})$ with $a_{\frakp}.\psi_{\frakp}(x):= \psi_{\frakp}(ax)$, then  $L_{p}^{(\Gamma)}(A, a.\psi_{p})(\chi)=\chi_{p}(a)L_{p}^{(\Gamma)}(A, \psi_{p})(\chi)$. For a careful discussion of this point and formalisation of the `space of additive characters of level~$0$', see \cite[Theorem A]{dd}. In this paper we will be interested in the leading term at $\chi=\one$, which is independent of choices.}
 of  an additive character $\psi_{\frakp}$ of $ K_{\frakp}$ of level $0$.
\end{enonce*}

Note that the interpolation property determines $L_{p}^{(\Gamma)}(A)$ uniquely if it exists. When $\Gamma$ is the Galois group $\Gamma_{K}$ of the maximal $\Z_{p}$-extension of $K$ (or when $\Gamma$  is understood from context), it will be omitted from the notation.

\begin{rema} If $A/K$ is a ($p$-ordinary) elliptic curve over a number field, a generalisation of the Shimura--Taniyama--Weil conjecture predicts the existence of a ($p$-ordinary, cohomological) automorphic representation $\pi$ for $\GL_{2}/K$  such that $L(s,\pi)=L(A,s+1/2)$.
 A construction of $p$-adic $L$-function as in Hypothesis {$(L_{p})$} attached to such ordinary (more generally non-critical) representations $\pi$  has been announced by D. Hansen.
\end{rema}

Let $\mathscr{I}_{\Gamma}\subset\OO_{L}\llb \Gamma\rrb_{L}$ be the augmentation ideal (which is also the ideal of functions on $\Y_{\Gamma}$ vanishing at the trivial character $\chi=\one$), and define the \emph{order of vanishing}  ${\rm ord}_{\chi=\one}L_{p}^{(\Gamma)}(A)$ to be the largest integer $\wtil{r}\geq 0$ such that $L_{p}^{\Gamma}(A)\in \mathscr{I}_{\Gamma}^{\wtil{r}}$. Note that if $\Gamma\to \Gamma'$ is a quotient, then 
\begin{align}\label{ineq}
{\rm ord}_{\chi=\one}L_{p}^{(\Gamma)}(A)\leq {\rm ord}_{\chi=\one}L_{p}^{(\Gamma')}(A).
\end{align}
For any  $\wtil{r}\geq {\rm ord}_{\chi=\one}L_{p}(A)$, we define 
$${\rm d}^{\wtil{r}} L_{p}(A, \one)\quad \in {\rm Sym}^{\wtil{r}}\Gamma\otimes {L},$$
as the image of  $ L_{p}(A)$ in $\calI_{\Gamma}^{\wtil{r}}/\calI_{\Gamma}^{\wtil{r}+1}\cong  {\rm Sym}^{\wtil{r}}\Gamma\otimes{L}$, the last isomorphism being given by $\prod_{i=1}^{\wtil{r}}(\gamma_{i}-1)\mapsto [\gamma_{1}\otimes\cdots\otimes \gamma_{\wtil{r}}]$. When   $\wtil{r}= {\rm ord}_{\chi=\one}L_{p}(A)$, it can be thought of as the $\Gamma$-leading term of $L_{p}(A)$ at $\chi=\one$.\footnote{We should remark, to clear any possible confusion which might be genrated by our notation, that ${\rm d}^{\wtil{r}} L_{p}(A, \one)$  is the analogue of the usual Taylor coefficient  ${1\over\wtil{r}!}L^{(\wtil{r})}(A, 1)$ (and \emph{not} of the $r^{\rm th }$ derivative $L^{(\wtil{r})}(A, 1)$).}

\subsubsection{Arithmetic side: the extended Mordell--Weil and Selmer groups} The Birch and Swinnerton-Dyer conjecture relates  ${\rm ord}_{s=1}L(A,s)$ to the rank 
 $$r:={\rm rk}\, A(K).$$ For our modified case,  let   $S_{p}^{\rm exc}=S_{p}^{\rm exc}(A)$ be the set of places above $p$ over which $A$ has split multiplicative reduction, and let $r^{\rm exc}=|S_{p}^{\rm exc}|$. 
We conjecture that, if Hypothesis {$(L_{p})$} is satisfied for $(A, \Gamma)$ and  if the natural surjection $\Gamma_{K}\to \Gamma_{\Q}$ factors through $\Gamma$,\footnote{If this last condition is not satisfied, there are examples in which the order of vanishing can be higher; in fact we may even have $L_{p}^{(\Gamma)}(A)=0$ identically, see Proposition \ref{Lp for A} below.}
\begin{align}\label{ordvan}
{\rm ord}_{\chi=\one}L_{p}^{(\Gamma)}(A)=\wtil{r}:=r+r^{\rm exc}.
\end{align}

The integer $\wtil{r}$ is interpreted as the rank of the \emph{extended Mordell--Weil group} $A^{\dagger}(K)$. In order to define it, recall first  that Tate proved that  if $A'/F$ is an elliptic curve with split multiplicative reduction over a  non-archimedean local field $F$, there is a rigid analytic  isomorphism ${\bf G}_{m, F}^{\rm an}/ q^{\Z}\to A'^{\rm an}$ for some
  $q\in F^{\times}$ (called the \emph{Tate period} of $A'/F$ and chosen to satisfy ${\ord}_{\frakp}(q)>0$). Then, letting $q_{A, \frakp}$ be the Tate period of $A/{K_{\frakp}}$ for $\frakp \in S_{p}^{\rm exc}$,
\begin{align*}
A^{\dagger}(K):=A(K) \oplus \bigoplus_{\frakp \in  S_{p}^{\rm exc}}\Z\, q_{A, \frakp},
\end{align*}
 and $\wtil{r}={\rm rk}\, A^{\dagger}(K)$.
 
The work of \nek \ (see \cite[\S7.14]{nekheights}, \cite[\S9.6.7]{nek-selmer}) introduces an  extended Selmer group $\wtil{H}^{1}_{f}(K, V_{p}A)$,\footnote{In the case at hand this  was also defined \emph{ad hoc} in \cite{mtt}.}, a $\Q_{p}$-vector space containing $A^{\dagger}(K)\otimes \Q_{p}$ and   explicitly described as
  \begin{align}\label{ext-sel}
\wtil{H}^{1}_{f}(K, V_{p}A)\cong H^{1}_{f}(K, V_{p}A) \oplus \bigoplus_{\frakp \in  S_{p}^{\rm exc}}\Q_{p} q_{A, \frakp}, 
\end{align}
where $H^{1}_{f}(K, V_{p}A)$ is  the (Bloch--Kato) Selmer group of $A$. We have  
$A^{\dagger}(K)_{\Q_{p}}=  \wtil{H}^{1}_{f}(K, V_{p}A)$, {provided that} $\Sha(A)[p^{\infty }]$ is finite. Our conjectures will implicitly assume this to be the case, in the sense that they would likely need to be modified if $\Sha(A)[p^{\infty }]$ were to be infinite.\footnote{Cf. the remark \emph{On the rational part in the arithmetic side} in \S\ref{misc}.}

\subsubsection{Arithmetic side: the regulator} We now turn to describing the various arithmetic  ingredients which will combine into the conjectural value of the leading term. For the complex $L$-function, the leading term is conjecturally given by the product of a rational number and of the  N\'eron--Tate regulator on $A(\Q)$.  In our case, the N\'eron--Tate height pairing
\begin{align}\label{NT}
h_{\rm NT}\colon A(K)\times A(K)\to \R,
\end{align}
whose \emph{discriminant} (see Definition \ref{def-reg} below: it also accounts for $|A(K)_{\rm tors}|^{2}$)
 on $A(K)$ is denoted by $R_{\rm NT}(A)$,
 is replaced by the extended height pairing 
 \begin{align}\label{ext-ht}
\wtil{h}_{\ell}\colon \wtil{H}^{1}_{f}(K, V_{p}A)\times \wtil{H}^{1}_{f}(K, V_{p}A)\to \Gamma\otimes L;
\end{align}
defined by \nek\ (\emph{locc. citt.}). (More precisely, \nek\ defines a pairing $\wtil{h}$ with values in $K^{\times}\bks K_{\A^{\infty}}^{\times}\hat{\otimes} L=\Gamma_{K}\hat{\otimes} L$. The pairing $\wtil{h}_{\ell}$ is its image under \eqref{ell}.)
 The regulator term 
$$\wtil{R}_{\ell}(A)\in {\rm Sym}^{\wtil{r}}\Gamma\otimes L$$
is defined as  the discriminant   of \eqref{ext-ht} on  $A^{\dagger}(K)$.
A concrete description of the height pairing \eqref{ext-ht} will be given in \S\ref{phts}. For now we will just say that 
(a) it extends the ``canonical'' height pairing 
\begin{align}\label{can-ht}
h_{\ell}=h_{\ell}^{\rm can}\colon A(K)\times A(K)\to \Gamma\otimes L
\end{align}
also defined in \cite[\S 7.14]{nekheights};  (b) in the case $K=\Q$ considered in \cite{mtt}, our regulator $\wtil{R}_{\ell}(A)$ differs from the regulator of \cite{mtt}  by a factor ${\rm ord}_{p}(q_{A,p})$ if $S_{p}^{\rm exc}=\{p\}$  (whereas the two  coincide  if $S_{p}^{\rm exc}=\emptyset$).

\subsubsection{The conjecture}
We first recall the classical Birch and Swinnerton-Dyer (${\rm BSD}_{\infty}$) conjecture for  elliptic curves over  number fields. We present it as a ``hypothesis'', as we  will prefer to formulate its $p$-adic analogue based on the hypothesis that the first two predictions of (${\rm BSD}_{\infty}$) hold.
\begin{enonce*}{Hypothesis}[${\textup{BSD}}_{\infty}$]\label{BSDc} Let $A$ be an elliptic curve over a number field $K$.
\begin{enumerate}
\item\label{BSD1} The integer $r_{\rm an}(A):={\rm ord}_{s=1}L(A,s)$ equals $r(A):={\rm rk}\, A(K)$;
\item\label{BSD2} Letting $c_{v}(A)$ denote the local Tamagawa number of $A$ at a prime $v$ of $K$ and $R_{\rm NT}(A)$  be the regulator of \eqref{NT} on $A(K)$,  the number 
$$ |\Sha(A)|_{\rm an}:={L^{(r)}(A, 1)\over r! |D_{K}|^{-1/2} R_{\rm NT}(A) \Omega_{A}\prod_{v}c_{v}(A)}$$
belongs to $\Q^{\times}$.
\item\label{BSD3} The Tate--Shafarevich group $\Sha(A)$ is finite, and its order equals $|\Sha(A)|_{\rm an}$. 
\end{enumerate}
\end{enonce*}

We may now state the $p$-adic Birch and Swinnerton--Dyer conjecture (${\rm BSD}_{p}$).
\begin{conj*}[${\textup{BSD}}_{p}$] Let $A$ be an elliptic curve over a number field $K$, $p$ a rational prime such that $A$ has ordinary reduction at all the primes $\frakp\vert p$ of $K$. Let $\Gamma$ be a $\Z_{p}$-free quotient of $\Gal(K^{\rm ab}/L)$.
 Suppose that Hypotheses {$(L_{p})$} and $({\rm BSD}_{\infty})$--\ref{BSD1}-\ref{BSD2} are satisfied and that ${\rm ord}_{s=1}L(A,s)=r$. 
 Let $S_{p}$ be the set of primes  of $K$ above $p$, $S_{p}^{\rm exc}\subset S_{p}$ its subset of primes of split multiplicative reduction for $A$, and let
$$\wtil{r}:= r + |S_{p}^{\rm exc}|.$$ 

Let $ \wtil{e}_{\frakp}(\one):=e_{\frakp}(\one) $ if $\frakp\in S_{p}-S_{p}^{\rm exc}$, and $ \wtil{e}_{\frakp}(\one):=  {\rm ord}_{\frakp}(q_{A, \frakp})^{-1}$ if $\frakp\in S_{p}^{\rm exc}$.

Then $L_{p}^{(\Gamma)}(A)$ vanishes at $\chi=\one$ to order at least $\wtil{r}$, we have\footnote{This isomorphism is equivalent to the finiteness of the $p$-part $\Sha(A)[p^{\infty}]$ of the Tate--Shafarevich group of $A$.} $A^{\dag}(K)_{\Q_{p}}\cong \wtil{H}^{1}_{f}(K, V_{p}A) $ and their dimension is $\wtil{r}$, 
and
\begin{align}\label{mainid}
{\rm d}^{\wtil{r}}L_{p}^{(\Gamma)}(A,\one)=
\prod_{\frakp\vert p} \wtil{e}_{\frakp}(\one)
\cdot
\wtil{R}_{\ell}(A)
\cdot
|\Sha(A)|_{\rm an}
\prod_{v}c_{v}(A) 
 \quad \text{in\ } {\rm Sym}^{\wtil{r}}\Gamma\otimes L.
\end{align}
\end{conj*}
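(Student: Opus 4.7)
The conjecture is genuinely hard in full generality, so my plan is to attack the cases indicated in the abstract: $K$ imaginary quadratic, $A = E_{K}$ the base change of an elliptic curve $E/\Q$, and $r(A) \in \{0,1\}$. The organizing principle is to exploit the canonical decomposition $\Gamma_{K}\otimes\Q_{p} = \Gamma_{\Q}^{\rm cyc}\otimes\Q_{p} \oplus \Gamma^{-}\otimes\Q_{p}$ (cyclotomic of $\Q$ versus anticyclotomic) together with the Artin factorization $L(A_{K},\chi,s) = L(E, \chi^{+},s)\,L(E^{K}, \chi^{+},s)$ when $\chi$ is cyclotomic over $\Q$, where $E^{K}$ is the quadratic twist. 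Under Hypothesis $(L_{p})$ this should lift to a factorization of $L_{p}^{(\Gamma_{K})}(A)$ along the cyclotomic line in terms of the Mazur--Tate--Teitelbaum $p$-adic $L$-functions of $E$ and $E^{K}$; the behaviour transverse to this line is captured by anticyclotomic/Rankin--Selberg theory.

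\textbf{Rank 0.} Here $\wtil{r} = r^{\rm exc}\in\{0,1,2\}$ depending on the splitting behaviour of $p$ in $K$ and the reduction of $E$ at $p$. When $r^{\rm exc}=0$ the conjecture is an interpolation identity. For $r^{\rm exc}\geq 1$ one establishes the order of vanishing via the factorization and, for each exceptional prime $\frakp$, applies the Greenberg--Stevens theorem (or its author's generalisation) to extract a factor $\ord_{\frakp}(q_{A,\frakp})^{-1}$; matching the Tamagawa, $\Omega_{A}$, and $|\Sha|_{\rm an}$ contributions is then bookkeeping, using the known cases of $({\rm BSD}_{\infty})$ in rank $0$ and the anticyclotomic Waldspurger/Rubin-type input at the good primes.

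\textbf{Rank 1.} Fix a Heegner point $P\in A(K)$, which spans $A(K)\otimes\Q$ by Gross--Zagier--Kolyvagin. The complex Gross--Zagier formula expresses $h_{\rm NT}(P,P)$ in terms of $L'(A_{K},1)$, pinning down $R_{\rm NT}(A)$. On the $p$-adic side, the $p$-adic Gross--Zagier formulas of Perrin-Riou and of the author express the cyclotomic derivative of the associated $p$-adic $L$-function in terms of the canonical $p$-adic height $h_{\ell}^{\rm can}(P,P)$. For $r^{\rm exc}=0$ a direct comparison, combined with Venerucci's refinement of Greenberg--Stevens when needed, yields the conjecture. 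For $r^{\rm exc}\in\{1,2\}$ one must differentiate further in the direction of each exceptional prime, which is where the new work is concentrated.

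\textbf{The main obstacle.} The genuinely multivariable case is $r(A)=1$ with $r^{\rm exc}=2$ (so $\wtil{r}=3$): this arises when $p$ splits in $K$ as $\frakp_{1}\frakp_{2}$ and $E$ has split multiplicative reduction at $p$. Here the leading term is a symmetric three-tensor in $\Gamma_{K}\otimes L$ that no single one-variable specialization detects, and $\wtil{R}_{\ell}(A)$ is a $3\times 3$ determinant mixing $h_{\ell}^{\rm can}(P,P)$ with the two Tate periods $q_{A,\frakp_{1}}, q_{A,\frakp_{2}}$. My plan is to deform $E$ in its Hida family, combine the Bertolini--Darmon construction of a big Heegner point with a $p$-adic Gross--Zagier formula in families, and compute the three-fold leading term by differentiating simultaneously in the weight direction of the family and in the cyclotomic direction, with each weight-differentiation that is trivial on the central fibre producing a Greenberg--Stevens-type factor $\ord_{\frakp}(q_{A,\frakp})^{-1}$ via the $\mathscr{L}$-invariant mechanism. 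Identifying the result with $\wtil{R}_{\ell}(A)$ demands \nek's explicit formula for $\wtil{h}_{\ell}$ at multiplicative primes and, most delicately, a check that the off-diagonal entries of the extended regulator are reproduced by the corresponding mixed derivatives of the family $L$-function. This matching of cross-terms between the analytic and arithmetic sides is, in my view, the hardest technical step, since it requires propagating the compatibility between Nekov\'a\v{r}-style extended heights and $p$-adic $L$-functions in families through two independent deformation directions at once.
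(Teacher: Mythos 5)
Your broad strategy — split $\Gamma_{K}\otimes\Q_{p}$ into cyclotomic and anticyclotomic pieces, use the factorization of the $p$-adic $L$-function along $\Y^{+}$ to reduce the cyclotomic direction to the case $K=\Q$ (Greenberg--Stevens, Perrin-Riou, Venerucci), and handle the transverse direction by Rankin--Selberg/Heegner methods — does match the paper's organization, as do the rank-$0$ and rank-$1$-with-$r^{\rm exc}\leq 1$ cases. However, your mechanism for the genuinely multivariable case $r=1$, $r^{\rm exc}=2$, $\wtil{r}=3$ diverges from the paper's in two substantial ways.

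First, the paper does not attack the $\wtil{r}=3$ case by a simultaneous weight-and-cyclotomic double differentiation of a Hida-family $L$-function, as you propose. Instead it exploits the \emph{anticyclotomic} variable: the key objects are the big Heegner point $\mathscr{P}\in \wtil{H}^{1}_{f}(K, V_{p}A\otimes\Z_{p}\llb\Gamma^{-}\rrb)$ over $\Y^{-}$ (not the weight line) and the $p$-adic Gross--Zagier formula in anticyclotomic families, which writes $\mathrm{d}^{+}L_{p}(A)|_{\Y^{-}}$ as $c_{\infty}(A)^{-1}\,\underline{\wtil{h}}^{+}(\mathscr{P},\mathscr{P}^{*})/\delta(f)$. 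Because $\mathscr{P}(\one)$ has a trivial zero, one applies $(\mathrm{d}^{-})^{2}$ and inserts the Castella / Molina Blanco formula $\mathrm{d}^{-}\mathscr{P}(\one)=\mathcal{L}_{\frakp}^{-}(A)\,P(f)$ (this is the one place Hida theory enters, but as an imported black box, not as a deformation direction of the theorem being proved). In particular the three derivatives split as two anticyclotomic and one cyclotomic, the latter absorbed into the `height' output of the family $p$-adic GZ formula; there is no weight derivative to match against a $\Gamma_{K}$-direction.

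Second, you underestimate how much the paper buys from parity. Propositions \ref{feq} and \ref{feq2} show that both sides of \eqref{mainid} vanish under $\pi_{i}$ for $i$ of the wrong parity, so the only nontrivial projections in the range $\wtil{r}\leq 3$ are the purely cyclotomic one (reduced to $\Q$) and a single `almost-anticyclotomic' one. Combined with the Pfaffian-regulator formalism and the observation that $\mathrm{d}^{-}\mathscr{P}(\one)$ lies in the radical of $\wtil{h}^{-}$, the cross-term matching you flag as the hardest step is short-circuited: the $3\times3$ extended regulator projects onto $\mathcal{L}_{\frakp}^{-}\mathcal{L}_{\frakp^{*}}^{-}\cdot h^{\mathrm{norm},+}(P,P)$ with no surviving off-diagonal contributions. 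Your plan as written would indeed confront a difficult cross-term computation that the paper structurally avoids by the almost-anticyclotomic reduction (Proposition \ref{bdvsbsd}), and it is not clear the weight-and-cyclotomic route has a comparably clean endgame.
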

\begin{rema} If the conjecture holds, the order of vanishing at $\one$ of $L_{p}^{(\Gamma)}(A)$ is $\wtil{r}$ if and only if  $\wtil{R}_{\ell}(A)\neq 0$, i.e. if the extended height pairing \eqref{ext-ht} is non-degenerate. This is conjectured to hold under the assumptions stated before the conjectured equality \eqref{ordvan} (which is thus recovered); it can fail in general.
\end{rema}
\begin{rema} The rank of $\Gamma_{K}$ is $1+s+{\delta}$, where $s$ is the number  of complex places of $K$ and $\delta=\delta_{K,p}$ is the Leopoldt defect of $K$ at $p$ (conjectured, and known if $K$ is abelian, to be $0$). Therefore $${\rm rank}\,{\rm Sym}^{\wtil r}\Gamma_{K} = {s+\delta+\wtil r  \choose \wtil r}.$$
\end{rema}
\begin{rema}It is easy to see that if the conjecture holds for $\Gamma$, then it holds for any quotient $\Gamma'$ of $\Gamma$. One further important case of $({\textup{BSD}}_{p})$  was considered before, namely when $A=E_{K}$ is the base-change of an elliptic curve over $\Q$ to an imaginary quadratic field $K$ and $\Gamma=\Gamma^{-}$ is the rank-$1$ quotient of $\Gamma_{K}$ on which the complex conjugation $c\in \Gal(K/\Q)$ acts by $-1$. This \emph{anticyclotomic} single-variable conjecture was stated and studied by Bertolini--Darmon in a series of works beginning with \cite{bdMT}. 
\end{rema}

\subsection{Evidence} The rest of this paper presents the evidence for Conjecture {$({\textup{BSD}}_{p})$}, whose compatibility with the conjecture of \cite{mtt} is recalled in Proposition \ref{compatibility}.  

The results are of course concentrated in the cases where something is known for the classical conjecture  {$({\textup{BSD}}_{\infty})$}, namely when $A$ is an elliptic curve over $\Q$ (or a totally real field $F$) or its base-change to an imaginary quadratic field (or a quadratic CM extension of $F$), and the archimedean analytic rank is at most~$1$. The cases of higher-degree totally real and CM fields are largely analogous to the cases of $\Q$ and imaginary quadratic fields, but the results there   a little more fragmentary and a little less clean.  We will therefore limit our discussion of  them to various remarks (\ref{rem dd}, \ref{rem spiess}, \ref{rem hung})  throughout the main body of the text. 

\begin{theoA}\label{mainQ} Let $E/\Q$ be an elliptic curve of conductor $N$ with ordinary reduction at the prime~$p$.  
 Suppose that  $r_{\rm an}:={\rm ord}_{s=1} L(E,s)\leq 1$ 
and  that 
\begin{enumerate}
\item[{\rm ($*$)}]\label{exist mult} if $r_{\rm an}=1$ and $S_{p}^{\rm exc}(E)\neq \emptyset$, then $p\geq 5$ and there exists another prime $m\neq p$ of multiplicative reduction for $E$.
\end{enumerate}
 Then Hypotheses {$(L_{p})$} and $({\rm BSD}_{\infty})$--\ref{BSD1}-\ref{BSD2} are satisfied, and  Conjecture {$({\textup{BSD}}_{p})$} holds.
 \end{theoA}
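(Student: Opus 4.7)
The plan is to verify the two standing hypotheses and then treat four cases indexed by $(r_{\rm an},r^{\rm exc})\in\{0,1\}\times\{0,1\}$ separately. Hypothesis $(L_{p})$ for $E/\Q$ is the classical construction of Mazur--Swinnerton-Dyer, Amice--V\'elu and Vishik, as used in \cite{mtt}. Hypothesis $({\rm BSD}_{\infty})$-\ref{BSD1}-\ref{BSD2} under $r_{\rm an}\leq 1$ is the combined theorem of Gross--Zagier and Kolyvagin, supplemented by Shimura's rationality in the rank-zero case. By Kronecker--Weber, the unique nontrivial $\Z_{p}$-free quotient of $\Gamma_{\Q}$ is the cyclotomic line $\cong\Z_{p}$, and by the quotient remark following Conjecture $({\rm BSD}_{p})$ it suffices to verify \eqref{mainid} for this $\Gamma$, so that $\wtil r=r+r^{\rm exc}\in\{0,1,2\}$.

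For $(r_{\rm an},r^{\rm exc})=(0,0)$, identity \eqref{mainid} at $\wtil r=0$ reduces by the interpolation property of $L_{p}(E)$ to the $p$-part of the classical rank-zero Birch and Swinnerton-Dyer formula, which in our ordinary setting follows from the proof of the cyclotomic main conjecture for $E$ (Kato's divisibility together with Skinner--Urban). For $(0,1)$, \eqref{mainid} at $\wtil r=1$ is the exceptional-zero formula conjectured in \cite{mtt} and proved by Greenberg--Stevens, again with the rational $p$-part supplied by the main conjecture. For $(1,0)$, Perrin-Riou's cyclotomic $p$-adic Gross--Zagier formula expresses $\mathrm{d}L_{p}(E,\mathbf{1})$ in terms of the cyclotomic $p$-adic height of a Heegner point; combined with Kolyvagin's bound on $\Sha(E)[p^{\infty}]$ and the rank-one main-conjecture input, this yields \eqref{mainid}.

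The genuinely new case is $(r_{\rm an},r^{\rm exc})=(1,1)$, with $\wtil r=2$: a single cyclotomic variable, but a double zero. Here I would invoke a refinement of Venerucci's recent theorem on double zeros of cyclotomic $p$-adic $L$-functions of elliptic curves with split multiplicative reduction and analytic rank one. Venerucci's strategy is to deform $E$ along a Hida family in which the cyclotomic $p$-adic $L$-function is pulled back from a two-variable object, differentiate twice (once in each direction), and identify the result as a product of a Greenberg--Stevens $\mathcal{L}$-invariant and the $p$-adic height of a Heegner point on $E$. The hypothesis~$(*)$, namely that $p\geq 5$ and that $E$ admits an auxiliary multiplicative prime $m\neq p$, is precisely what produces a Hida family with the correct local ramification and supplies the nonvanishing inputs required by the second-derivative argument.

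I expect the main technical obstacle to be not Venerucci's theorem itself but the refinement needed to identify its output with the precise right-hand side of \eqref{mainid} as an element of $\mathrm{Sym}^{2}\Gamma\otimes L$. Venerucci's leading term is a product of an $\mathcal{L}$-invariant with a $p$-adic height of the Heegner point, while \eqref{mainid} demands Nekov\'a\v{r}'s extended regulator $\wtil R_{\ell}(E)$ together with the factor $\mathrm{ord}_{p}(q_{E,p})^{-1}\prod_{v}c_{v}(E)$. I would carry out this matching using the explicit description of the extended height $\wtil h_{\ell}$ in terms of $h_{\ell}^{\rm can}$ and of the Tate-period contribution recalled in \S\ref{phts}, and the $p$-adic height formulas of Perrin-Riou and of the author, which together translate the Heegner-point rank-one data into the extended-Selmer framework.
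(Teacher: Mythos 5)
Your high-level architecture — verify the two standing hypotheses and split into cases by $(r_{\rm an}, r^{\rm exc})$ — matches the paper's, and you correctly identify Greenberg--Stevens, Perrin-Riou, and Venerucci as the key inputs in each case. However, there are two substantive misconceptions and a missing ingredient.

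First, you repeatedly assert that the "rational $p$-part" in the cases $(0,0)$, $(0,1)$, $(1,0)$ requires main-conjecture input (Kato plus Skinner--Urban). This misreads the formulation of $({\rm BSD}_{p})$: the conjecture is stated with $|\Sha(A)|_{\rm an}$, which is \emph{defined} as the ratio $L^{(r)}(A,1)/(r!\,|D_{K}|^{-1/2}R_{\rm NT}(A)\Omega_{A}\prod c_{v})$ and is only required by Hypothesis $({\rm BSD}_{\infty})$--\ref{BSD2} to be rational. Consequently the right-hand side of \eqref{mainid} collapses, via Proposition \ref{compatibility}, to an expression in $L^{(r)}(A,1)/\Omega_{A}$, $\mathcal{L}$-invariants, heights, and Euler factors — no Tate--Shafarevich input is needed. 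In particular the $(0,0)$ case is a tautology given the interpolation property (as the paper says, "trivial"), and the $(0,1)$ case is exactly Greenberg--Stevens, with nothing further to add. The paper explicitly flags (in the remark "On the rational part in the arithmetic side") that replacing $|\Sha|_{\rm an}$ by $|\Sha|$ would give a stronger conjecture that is \emph{not} what is being proved here.

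Second, for $(1,0)$ you invoke Perrin-Riou's cyclotomic $p$-adic Gross--Zagier formula directly, but that result covers only good ordinary reduction; the statement of Theorem \ref{mainQ} also includes non-split multiplicative reduction. The paper instead passes to the base change $A = E_{K}$ over an auxiliary imaginary quadratic field $K$ (chosen so that all $v\mid N$ split and $L(E^{(K)},1)\neq 0$), and applies the $p$-adic Gross--Zagier formula of Theorem \ref{pgz} (deduced from the formula of the author in \cite{dd}), together with Corollary \ref{index-heeg} to match the index with $|\Sha|_{\rm an}\prod c_{v}$. Your route is a correct strategy only in the good-reduction subcase.

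Third, and most importantly for the genuinely new case $(1,1)$: you correctly name Venerucci's theorem as the key input, and you correctly observe that it must be refined to pin down an exact constant, but you do not identify the mechanism. The hypothesis $(*)$ (existence of $m\neq p$ of multiplicative reduction) is \emph{not} there to produce a Hida family; it is what permits passing to the Shimura curve $X^{mp}_{0}(N/mp)$ and the definite Shimura set $X^{m}_{0}(N/m)$, so that the degrees $\delta(f)$ and $\delta(f')$ of the two parametrizations can be compared. The essential technical lemma is Proposition \ref{tak}, a variant of Takahashi's theorem \cite{takahashi} (after Ribet--Takahashi \cite{rib-tak}), asserting $\delta(f) = \delta(f')/c_{p}(E)$, proved via the Cerednik--Drinfeld uniformization and the Picard--Lefschetz formula; this is what eliminates the rational ambiguity in Venerucci's \cite[Theorem D]{venerucci}. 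Your suggested matching "using the explicit description of the extended height" and "$p$-adic height formulas" skips this geometric input, which is where the real work of the refinement lies.
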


\begin{theoA}\label{mainK} Let $E/\Q$ be an elliptic curve of conductor $N$ with ordinary reduction at the prime~$p$. Let $K$ be  an imaginary quadratic field of discriminant prime to $Np$ and let  $A=E_{K}$ be the base-change. 
 Suppose that  $r_{\rm an}:={\rm ord}_{s=1} L(A,s)\leq 1$ 
and that
\begin{enumerate}
\item[{\rm ($*'$)}] if $r_{\rm an}=1$ and $S_{p}^{\rm exc}(A)\neq \emptyset$, then $p\geq 5$ and there exists another prime $m\neq p$ of multiplicative reduction for $E$, and moreover:
\begin{enumerate}
\item if $p$ is inert in $K$, then the prime $m$ can be chosen to be also inert in $K$;
\item if $p$ splits in $K$, then every $v\vert N$ splits in $K$.
\end{enumerate}
\end{enumerate}
  Then Hypotheses {$(L_{p})$} and $({\rm BSD}_{\infty})$--\ref{BSD1}-\ref{BSD2} are satisfied, and  Conjecture {$({\textup{BSD}}_{p})$} holds.

\end{theoA}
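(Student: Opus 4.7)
The plan is to exploit the factorisation $L(A,s) = L(E,s) L(E^{K},s)$ induced by base change, where $E^{K}$ is the twist of $E$ by the quadratic character of $K/\Q$, together with parallel factorisations of the two-variable $p$-adic $L$-function $L_{p}^{(\Gamma_{K})}(A)$ along appropriate subfamilies of the rigid space $\Y_{\Gamma_{K}}$. Since $K$ is imaginary quadratic and abelian (so the Leopoldt defect vanishes), $\Gamma_{K}$ has rank $2$ and decomposes under $\Gal(K/\Q)$ as $\Gamma^{+} \oplus \Gamma^{-}$ with $\Gamma^{+} \cong \Gamma_{\Q}$ the cyclotomic line and $\Gamma^{-}$ the anticyclotomic line. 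Correspondingly, under the hypotheses of the theorem, $A^{\dagger}(K)_{\Q_{p}}$ splits into an $E$-isotypic piece, an $E^{K}$-isotypic piece, and the span of the Tate parameters $q_{A,\frakp}$ for $\frakp \in S_{p}^{\rm exc}(A)$, and the extended height pairing is block-diagonal with respect to this decomposition.

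I would split the analysis into cases according to $r_{\rm an}$ and $|S_{p}^{\rm exc}(A)|$. When $r_{\rm an}=0$ the leading term, the regulator, and the local factors all factor cleanly over $E$ and $E^{K}$, so \eqref{mainid} reduces to two cyclotomic instances of Theorem A. When $r_{\rm an}=1$ and $S_{p}^{\rm exc}(A) = \emptyset$ we have $\wtil r = 1$, and the single leading term in $\Gamma_{K}\otimes L$ is recovered from its restrictions to $\Gamma^{+}$ (via Theorem A applied to $E$ and $E^{K}$) and to $\Gamma^{-}$ (via the Bertolini--Darmon anticyclotomic conjecture, which is known in rank $\leq 1$ by work of Perrin-Riou and the author). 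The cases $r_{\rm an}=1$ with $p$ inert and exceptional, or $p$ split with exactly one exceptional prime, have $\wtil r = 2$ and succumb to analogous restriction arguments combined with the cyclotomic exceptional-zero formula of Greenberg--Stevens/Venerucci.

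The genuinely new case is $r_{\rm an}=1$, $p$ split in $K$, and both primes $\frakp, \bfrakp$ above $p$ exceptional for $A$: then $\wtil r = 3$, and the leading term lives in $\mathrm{Sym}^{3}\Gamma_{K}$, a rank-$4$ module. Three of the four monomial components can be accessed by restricting $L_{p}^{(\Gamma_{K})}(A)$ to lines through $\one$ in the $\Gamma^{+}$ and $\Gamma^{-}$ directions; along each such line the restriction factors as a product of known cyclotomic or anticyclotomic quantities, so the corresponding coefficients of \eqref{mainid} follow by combining Theorem A, Venerucci's formula, and the anticyclotomic BSD. The remaining (fully symmetric mixed) coefficient is the novel contribution: it should be computed by deforming Heegner points in the two-parameter Hida family that independently varies the $\frakp$- and $\bfrakp$-ordinary stabilisations, and identified with $\ord_{\frakp}(q_{A,\frakp})\cdot \ord_{\bfrakp}(q_{A,\bfrakp})$ times the height of the rational point via a $p$-adic Gross--Zagier formula in families.

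The main obstacle is this extraction of the symmetric mixed third derivative. One must show that differentiating the Heegner class in the two independent ordinary directions produces exactly the expected product of Tate logarithms against the rational-point height, with the correct rational constant and no spurious vanishing. The hypothesis ($*'$)(b) that every $v\mid N$ splits in $K$ is precisely what secures the Heegner hypothesis strongly enough to construct the necessary two-variable family of CM classes; the existence of the auxiliary multiplicative prime $m$ in ($*'$)(a) is used, as in Theorem A, to free the $p$-adic Gross--Zagier formula from trivial-zero degeneracies in the family. The same strategy, applied more broadly, then gives the \emph{almost-anticyclotomic} case of $(\mathrm{BSD}_{p})$ conditionally on the Bertolini--Darmon conjectures on Heegner families and on the (partially conjectural) Gross--Zagier and Waldspurger formulas in families, as announced in the introduction.
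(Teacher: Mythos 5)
Your high-level framework (decompose $\Gamma_{K}=\Gamma^{+}\oplus\Gamma^{-}$, case analysis by $r_{\rm an}$ and $|S_{p}^{\rm exc}|$, single out $r_{\rm an}=1$, $p$ split, $\wtil r=3$ as the genuinely new case) matches the paper's, but the proposal has several substantive gaps.

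First, you underuse parity, and this miscalibrates the entire case analysis. Propositions \ref{feq} and \ref{feq2} in the paper show that the component of ${\rm d}^{\wtil r}L_{p}(A,\one)$ in ${\rm Sym}^{i}\Gamma^{+}\otimes{\rm Sym}^{j}\Gamma^{-}$ vanishes identically (as does the corresponding regulator component) whenever $j$ is odd, by the functional equation on the analytic side and $\Gal(K/\Q)$-equivariance of $\wtil h$ (Lemma \ref{equivt}) on the arithmetic side. This reduces the check to exactly two components: the \emph{purely cyclotomic} ($j=0$) and the \emph{almost-anticyclotomic} ($j=\wtil r-\epsilon$). Your proposal envisages three nontrivial coefficients when $\wtil r=2$ and four when $\wtil r=3$, and describes a "fully symmetric mixed" coefficient that in fact vanishes automatically; without the parity argument, you set yourself up to prove identities that cannot be accessed by the stated tools (and one, "$p$ split with exactly one exceptional prime", cannot even occur by Lemma \ref{splitinert}).

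Second, the claim that the $r_{\rm an}=0$ case "reduces to two cyclotomic instances of Theorem~\ref{mainQ}" is wrong when $p$ splits and $E$ has split multiplicative reduction. There $\wtil r=2$ and the purely anticyclotomic component $({\rm d}^{-})^{2}L_{p}(A,\one)$ requires the anticyclotomic Waldspurger formula (Theorem \ref{wald-a}) together with Bertolini--Darmon's exceptional zero result \cite{bdperiods} (Theorem \ref{bd evidence}.\ref{bde2}); it does not factor through $E$ and $E^{(K)}$ on the cyclotomic line. Similarly, for $r_{\rm an}=1$ with $p$ inert exceptional, the almost-anticyclotomic component is Bertolini--Darmon \cite{bdCD}, not a cyclotomic exceptional-zero formula.

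Third, and most importantly, your proposed mechanism for the $\wtil r=3$ case (a two-parameter Hida family independently varying the $\frakp$- and $\bfrakp$-ordinary stabilisations) is not what the paper does, and it is not clear it would terminate: you would need a two-variable $p$-adic Gross--Zagier formula in such a family, which is not available. The paper instead works with a single anticyclotomic variable: it uses \nek's big height pairing $\underline{\wtil h}^{+}$ valued in $\OO(\Y^{-})^{\rm b}\otimes\Gamma^{+}$, the anticyclotomic $p$-adic Gross--Zagier formula of Theorem \ref{gza-th}, and the Castella/Molina Blanco exceptional-zero formula \eqref{cast-f} for ${\rm d}^{-}\mathscr{P}(\one)$, glued together by Proposition \ref{bdvsbsd}. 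The hypothesis $(*')$(b) is there to ensure $N^{-}=1$ so that the Castella/Molina Blanco input applies (Theorem \ref{bd evidence}.\ref{bde4}), not to enable a two-variable family of CM classes.

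In short, the decomposition is the right starting point, but you need (i) the parity trivialisation to cut the problem to two identities, (ii) the anticyclotomic Waldspurger/Gross--Zagier formulas and the Bertolini--Darmon results to handle the anticyclotomic identity even in rank $0$, and (iii) a one-variable (not two-variable) family argument with a $\Gamma^{+}$-valued big height for the genuinely new $({\rm d}^{-})^{2}{\rm d}^{+}$ coefficient.
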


\subsubsection{A new exceptional zero height formula} We highlight the main new case of Theorem \ref{mainK}, thereby also exemplifying the type of formulas predicted by our conjecture.  Let us introduce some notation. Given $A$, $\ell$, $\frakp\in S_{p}^{\rm exc}$ as above, the \emph{$\mathcal{L}$-invariant} (as introduced in \cite{mtt}) is
\begin{align}\label{Linv}
\mathcal{L}_{\frakp, \ell}(A):= {\ell_{\frakp}(q_{A,\frakp})\over {\rm ord}_{\frakp}(q_{A,\frakp})}\in \Gamma\otimes\Q_{p}.
\end{align}
We place ourselves in the setup of Theorem \ref{mainK}.
The dihedral action of  $\Gal(K/\Q)$ induces an eigenspace decomposition  $\Gamma_{K}=\Gamma^{+}\times \Gamma^{-}$, with $\Gamma^{+}$ identified with $\Gamma_{\Q}$ via the ad\`elic norm and each of the factors free of rank $1$ over $\Z_{p}$; we obtain a corresponding decomposition $\Y=\Y^{+}\times \Y^{-}$. We denote by 
$$({\rm d}^{-})^{i}({\rm d}^{+})^{j}\colon \calI_{\Gamma_{K}}^{i+j}\to ({\rm Sym}^{i}\Gamma^{-}\otimes {\rm Sym}^{j}\Gamma^{+})\otimes L$$
the projection, and by 
$$\mathcal{L}_{\frakp}^{\pm}(A):=\mathcal{L}_{\frakp, \ell^{\pm}}(A),$$
where $\ell^{\pm}\colon \Gamma \to \Gamma^{\pm}$ is the projection. Finally, there is another pairing $h^{\rm norm}_{\ell}$ on $H^{1}_{f}(K, V_{p}A)$, which is  defined  by means of universal norms and whose relation with $h^{\rm can}$ will be recalled in 
\eqref{h norm} below: we let
$$R^{\rm norm,+}(A)$$ 
be
the discriminant of $h^{\rm norm, +}=h^{\rm norm}_{\ell^{+}}$  on $A(K)$.
\begin{theoA}\label{exc} Let $E/\Q$ be an elliptic curve with conductor $N$ and  split multiplicative reduction at the prime $p\geq 5$. Let $K$ be an imaginary quadratic field such that all  primes dividing $N$ split in $K$ and let $A:=E_{K}$; then $S_{p}^{\rm exc}(A)=S_{p}=\{\frakp, \frakp^{*}\}$.
Suppose that ${\rm ord}_{s=1}L(A,s)=1$. We have  ${\rm d}^{i}L_{p}(A_{})=0$  for all $i\leq 2$, and 
\begin{gather*} 
({\rm d}^{-})^{2}{\rm d}^{+}L_{p}(A, \one)=\mathcal{L}_{\frakp}^{-}(A)\mathcal{L}_{\frakp^{*}}^{-}(A)\cdot {R}^{\rm norm,+}(A)\cdot 
|\Sha(A)|_{\rm an}
\prod_{v}c_{v}(A) 
\end{gather*}	
in $(\Gamma^{-})^{ \otimes 2}\otimes \Gamma^{+}\otimes{\Q_{p}}$.
\end{theoA}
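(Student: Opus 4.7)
The proof has three phases: (i) showing that $L_{p}^{(\Gamma_{K})}(A)$ vanishes at $\chi=\one$ to order at least three; (ii) computing the leading Taylor coefficient in tridegree $(-,-,+)$; (iii) identifying the result with the right-hand side via the cyclotomic $p$-adic Gross--Zagier formula. The starting heuristic is the shape of the interpolation property in Hypothesis~$(L_{p})$: since $\alpha_{\frakp}=\alpha_{\frakp^{*}}=1$ in our split-multiplicative setting, for finite-order characters $\chi$ unramified at $p$ close to the trivial one one has
\[
L_{p}^{(\Gamma_{K})}(A)(\chi) = (1-\chi(\frakp))(1-\chi(\frakp^{*})) \cdot \frac{L(A,\chi,1)}{|D_{K}|^{-1/2}\,\Omega_{A}}.
\]
Both Euler factors vanish at $\chi=\one$, giving two trivial zeros, while $L(A,1)=0$ by the assumption $r_{\rm an}(A)=1$, which accounts for the third.

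For the computation of the leading term I would work in coordinates adapted to the decomposition $\Gamma_{K}=\Gamma^{+}\times \Gamma^{-}$. The image of $(1-\chi(\frakp))(1-\chi(\frakp^{*}))$ in $\calI_{\Gamma_{K}}^{2}/\calI_{\Gamma_{K}}^{3}\cong {\rm Sym}^{2}\Gamma_{K}$ is $\ell(\varpi_{\frakp})\otimes \ell(\varpi_{\frakp^{*}})$. Using the action of complex conjugation (which fixes $\Gamma^{+}$ and negates $\Gamma^{-}$, together with $c(\varpi_{\frakp})=\varpi_{\frakp^{*}}$ up to units), this image decomposes as
\[
\ell^{+}(\varpi_{\frakp})^{\otimes 2} - \ell^{-}(\varpi_{\frakp})^{\otimes 2} \in {\rm Sym}^{2}\Gamma^{+} \oplus {\rm Sym}^{2}\Gamma^{-},
\]
with vanishing mixed $\Gamma^{+}\otimes \Gamma^{-}$ component. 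A product-rule computation then shows that all Taylor coefficients of $L_{p}(A)$ at $\one$ of total order $\leq 2$ vanish, and that the coefficient of tridegree $(-,-,+)$ equals
\[
-\,\ell^{-}(\varpi_{\frakp})^{\otimes 2} \otimes {\rm d}^{+}F(\one),
\]
where $F$ denotes the smooth factor interpolating $L(A,\chi,1)/(|D_{K}|^{-1/2}\Omega_{A})$. Using $\ell^{-}(q_{A,\frakp})=\ord_{\frakp}(q_{A,\frakp})\,\ell^{-}(\varpi_{\frakp})$ modulo the image of the unit part of $q_{A,\frakp}$ (absorbed in the normalisation of the ``extended'' versus classical regulator), the first factor matches $\mathcal{L}_{\frakp}^{-}(A)\,\mathcal{L}_{\frakp^{*}}^{-}(A)\cdot \ord_{\frakp}(q_{A,\frakp})\,\ord_{\frakp^{*}}(q_{A,\frakp^{*}})$.

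The remaining piece ${\rm d}^{+}F(\one)$ is the first cyclotomic derivative of the improved $p$-adic $L$-function at $\one$; since $F(\one)=0$ (because $L(A,1)=0$), by the cyclotomic $p$-adic Gross--Zagier formula for $A=E_{K}$ proved in the author's previous work, this equals $R^{\rm norm,+}(A)\cdot |\Sha(A)|_{\rm an}\prod_{v}c_{v}(A)$ up to the $\ord(q)$ factors that cancel those from the previous step. Multiplying the two contributions gives the desired identity.

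The main technical obstacle is to upgrade the heuristic product $L_{p}^{(\Gamma_{K})}(A)\sim (1-\chi(\frakp))(1-\chi(\frakp^{*}))\cdot F$ into a rigorous local analytic statement near $\one$, to construct $F$ as an analytic object admitting a $p$-adic Gross--Zagier formula, and to carefully match all normalisations (periods, Tamagawa factors, $\ord(q)$ contributions). This is done by embedding $f_{E}$ in a Hida family and using the associated two-variable (weight-by-anticyclotomic) Bertolini--Darmon--Prasanna-type $p$-adic $L$-function, whose weight-two specialisation recovers $F$ near $\one$ and whose $p$-adic Gross--Zagier formula in families identifies ${\rm d}^{+}F(\one)$ with the $p$-adic height of a Heegner point. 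The hypotheses $p\geq 5$ and the auxiliary multiplicative prime $m$ in condition~$(*')$ are used, respectively, for the validity of the $p$-adic Gross--Zagier formula and for the level-raising congruences underpinning the Hida-family construction.
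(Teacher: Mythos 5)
The proposal misidentifies the source of the $\mathcal{L}$-invariants and rests on a factorization that does not exist. Let me flag the two linked problems.

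First, the alleged product $L_{p}^{(\Gamma_{K})}(A)\sim(1-\chi(\frakp))(1-\chi(\frakp^{*}))\cdot F$ near $\one$ is not an analytic statement. The interpolation factor $e_{\frakp}(\chi_{\frakp})$ is the Euler factor $(1-\alpha_{\frakp}\chi(\frakp))(1-\alpha'_{\frakp}\chi(\frakp)^{-1})$ only when $\chi_{\frakp}$ is \emph{unramified}, and it is a Gauss sum when $\chi_{\frakp}$ is ramified. Since $\frakp$ and $\frakp^{*}$ are totally ramified in the two $\Z_{p}$-extensions of $K$ (so in $\Gamma_{K}\otimes\Q_{p}$), the only finite-order character of $\Gamma_{K}$ that is unramified at $p$ is $\one$. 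Thus the Euler factor form of the interpolation factor applies at a single point, not on a neighbourhood, and it cannot be ``pulled out'' and differentiated. Even ignoring this, the image $[\frakp]=\ell(\varpi_{\frakp})$ of a uniformizer in $\Gamma_{K}\otimes\Q_{p}$ is $0$ (again by total ramification: the Frobenius at $\frakp$ is trivial in $\Gamma_{K}$), so the function $\chi\mapsto 1-\chi([\frakp])$ on $\Y_{\Gamma_{K}}$, which is the only possible analytic avatar of your Euler factor, is identically zero, and differentiating it gives nothing.

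Second, and as a consequence, the $\mathcal{L}$-invariants $\mathcal{L}^{-}_{\frakp}(A)=\ell^{-}_{\frakp}(q_{A,\frakp})/\ord_{\frakp}(q_{A,\frakp})$ are not obtainable as derivatives of Euler factors; they see the unit part of the Tate period $q_{A,\frakp}$, which is precisely what your identification $\ell^{-}(q_{A,\frakp})=\ord_{\frakp}(q_{A,\frakp})\,\ell^{-}(\varpi_{\frakp})$ ``modulo the unit part'' throws away (and, since $\ell^{-}(\varpi_{\frakp})=0$, this identification is vacuous, not an absorbed normalisation). In the Greenberg--Stevens circle of ideas the $\mathcal{L}$-invariant comes from the \emph{weight} derivative of $a_{p}$; in the paper's actual argument it comes from the anticyclotomic derivative of the Heegner-point family: the paper proves Theorem~\ref{exc} by combining the anticyclotomic $p$-adic Gross--Zagier formula (Theorem~\ref{gza-th}, applied with Perrin-Riou's big norm-adapted height $\underline{h}^{\rm norm,+}$), the Castella--Molina Blanco formula ${\rm d}^{-}\mathscr{P}(\one)=\mathcal{L}^{-}_{\frakp}(A)\cdot P(f)$ from Theorem~\ref{bd evidence}.\ref{bde4} (this is where the $\mathcal{L}$-invariants arise), and the arithmetic bookkeeping of Corollary~\ref{index-heeg}. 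The BDP-type two-variable $L$-function you invoke is also the wrong object: its weight-two specialization computes Bloch--Kato logarithms of Heegner points, not the cyclotomic $p$-adic heights that $R^{\rm norm,+}(A)$ requires. So both the putative factorization and the tool you propose to construct $F$ fail, and the $\mathcal{L}$-invariants never enter correctly; the gap is not just a matter of ``carefully matching normalisations.''
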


\subsubsection{Outline of proofs} 
The proof of Theorem \ref{mainQ} in \S\ref{evidenceQ}  is reduced, according to the value of $r_{\rm an}$ and the reduction type of $E$ at $p$,   to  various formulas of Perrin-Riou \cite{PR}, Greenberg--Stevens \cite{GS}, Venerucci \cite{venerucci},  and the author \cite{dd}. The present work claims no originality for it (save perhaps for the precise determination of constants in Venerucci's formula for the case of  $r=1$ and split multiplicative reduction).

Theorem \ref{mainK} is also  separated into several cases, according as above to the value of $r_{\rm an}$, the reduction type of $E$ at $p$, \emph{and} the behaviour of $p$ in $K$.  Moreover each case is broken down into several formulas according to the natural rank one quotients ${\rm Sym}^{\wtil{r}}\Gamma_{K}\to {\rm Sym}^{i}\Gamma^{+}\otimes {\rm Sym}^{\wtil r -i}\Gamma^{-}$. (The results in each individual case    sometimes hold under   weaker assumptions than  those of Theorem \ref{mainK}.)

 We single out two cases: the  \emph{purely cyclotomic case} of $i=\wtil{r}$, which reduces to Theorem \ref{mainQ} for $E$ and the twist $E^{(K)}$, and the \emph{almost-anticylotomic case} of $i=0$ or $i=1$ (depending on the sign $\wtil \eps $ of the functional equation for $L_{p}(A)$). Our range of $\wtil r$ is low enough that those two cases, together with some identities $0=0$ deduced from parity considerations, will suffice to cover Theorem \ref{mainK}. 

In \S\ref{sec: anti}, we first review the construction of  $\Y^{-}$-families of Heegner points on $A$ in the case of $\wtil \eps =-1$, as well as analogous functions on $\Y^{-}$ if $\wtil \eps=+1$ (\emph{theta elements}). Then we (re)formulate conjectures of Bertolini--Darmon \cite{bdMT} on their leading terms at $\one\in \Y^{-}$, and describe the evidence: they   are  known under mild conditions if the rank of $A$ is at most $1$. Several cases are  due to  Bertolini--Darmon themselves \cites{BDrigid , bdCD,bdperiods}; we deduce one further case from a recent formula independently proven by Castella \cite{castella} and Molina Blanco \cite{MB}.  Finally, we state (partly conjectural) explicit versions of the $p$-adic Gross--Zagier and Waldspurger formula in anticyclotomic families, based on recent works of Chida--Hsieh \cite{CH} and the author \cite{dd}. 

The key observation made in \S\ref{evidence-imag} is that, granted the anticylotomic versions of  the $p$-adic Gross--Zagier and Waldspurger formulas, the almost-anticyclotomic case of $({\textup{BSD}}_{p})$ for $A$ is essentially equivalent to the Bertolni--Darmon conjecture for $A$. This allows to complete the proof of Theorem \ref{mainK}, of which Theorem \ref{exc} is shown to be a special case.

\subsection{Miscellaneous  remarks}\label{rmks} \label{misc} We conclude this introduction with some comments on Conjecture {$({\textup{BSD}}_{p})$}. 
\subsubsection{Supersingular primes} 
The presence of primes $\frakp \vert p$ of good supersingular reduction could also be allowed;   the associated variations, however,  would be orthogonal to the  situation of exceptional zeros, which is the most interesting phenomenon investigated in this paper. We therefore prefer to leave the extension to the interested reader, who may consult e.g. \cite{bpr} for the case of elliptic curves over $\Q$, \cite{PRbei} for the most general case (but restricted to the cyclotomic variable), and \cite{sprung} and references therein for alternative formulations. 
\subsubsection{Generalisations} It is easy to imagine that the conjecture can be extended, with appropriate variations, to the case of abelian varieties, or to twisted cases, or to modular forms of higher weight. In fact, we regard it  as a special case of a very general multi-variable conjecture on the leading terms of $p$-adic $L$-functions for deformations of (symplectic) motives, to which we hope to return  in future work.  
\subsubsection{Invitation to numerical investigations} In this paper, we have strived not for generality but for keeping the conjecture as concrete and elementary as possible: we hope this will encourage some of our readers to try and test it numerically in new cases.

\subsubsection{On the rational part in the arithmetic side} The formulation chosen for Conjecture {$({\textup{BSD}}_{p})$} is slightly spurious in that the term $|\Sha(A)|_{\rm an} $ appearing in its right-hand side should ideally be replaced by its conjectural value $|\Sha(A)|$. The reason behind  our choice is that, in practice, the study of this rational term and of Conjecture {$({\textup{BSD}}_{p})$} in our formulation are often best approached separately. Nevertheless the ``purer'' version of the conjecture would probably have the advantage of  admitting a rephrasing, \emph{up to $p$-units}, solely in terms of invariants of the \nek --Selmer complex underlying the extended Selmer group\footnote{This last formulation would not, even implicitly, depend on the finiteness of the Tate--Shafarevich group $\Sha(A)$.} (concretely, in terms  of the discriminant  of $\wtil{h}$ on $\wtil{H}^{1}_{f}(K, T_{p}A)$ and of the sizes of other cohomology groups $\wtil{H}^{i}_{f}(K, T_{p}A)_{\rm tors}$, cf. \cite[\S 0.16.2]{nek-selmer}).
\subsubsection{Relation to  Iwasawa main conjectures}
We expect the Birch and Swinnerton-Dyer conjecture for the extended Selmer group alluded to above  to be a consequence of a suitable version of the Iwasawa main conjecture and the hypothetic  non-degeneracy of $\wtil{h}$, via the formulas of \emph{loc. cit.}. In the  situation of Theorem \ref{mainK}, the relevant main conjecture is the two-variable main conjecture for $A=E_{K}$, which is proved in \cite{SU} when the root number of $A$ is $+1$. It is also to be expected that Theorem \ref{exc} up to $p$-units should follow from the Birch and Swinnerton-Dyer conjecture in anticyclotomic family for $A$; in the complementary case of good  reduction, this conjecture was recently proved by X. Wan \cite{xin-howard}

\subsubsection{On the work of Venerucci} Theorem \ref{mainQ}, in the case of exceptional zeros and rank one,  refines the recent result of Venerucci \cite[Theorem D]{venerucci} on the original conjecture of \cite{mtt}. In the same work he obtains a  a two-variable exceptional zero formula \cite[Theorem C]{venerucci}, which can be regarded as an instance of the general multi-variable conjecture alluded to above, and which was for us a source of inspiration alongside the works of Bertolini--Darmon and Castella.
\subsubsection{Exceptional $p$-adic heights formulas}
 The formula of Theorem \ref{exc} stands alongside Venerucci's \cite[Theorem D]{venerucci} (see also Proposition \ref{venD} below)
  as a second example of exceptional zero formulas for  $p$-adic heights of points on rank one curves.  Its proof only uses Heegner points through the author's formula of \cite[Theorem C]{dd} (also valid over totally real fields) and the work of Castella \cite{castella} (for $\Q$, but in principle generalisable) and Molina Blanco \cite{MB} (also valid for totally real fields, with an a priori  different notion of $\mathcal{L}$-invariants). Therefore 
   Theorem \ref{exc} should also be generalisable to totally real fields.

On the other hand, the proof of \cite[Theorem D]{venerucci}  makes essential use of Beilinson--Kato classes, whose analogue in the case of totally real fields has yet to be constructed (cf. \cite{plectic}).

\subsection{Acknowledgements} 
 I would like to thank Lennart Gehrmann and Eric Urban for conversations, and Francesc Castella and Shinichi Kobayashi for correspondence on their respective works  \cite{castella} and \cite{kob-mtt}.

\section{Foundations}\label{explicit}

We start this section with  a brief review of the various $p$-adic height pairings we will need in this paper.  Then we give some explicit versions of the  Waldspurger, Gross--Zagier, and $p$-adic Gross--Zagier formulas. 
in the first two cases these are directly taken from recent works  of Cai--Shu--Tian \cite{cst} and Chida--Hsieh \cite{CH} based on the general formulas of Waldspurger and  Yuan--Zhang--Zhang \cite{yzz}; in the latter $p$-adic case we apply the results of \cite{cst, CH}  to deduce them from  \cite{dd}. 

\subsection{$p$-adic height pairings}\label{phts} Let $A/K$ be an elliptic curve  over a number field, with ordinary reduction at all $\frakp \in S_{p}$. We will consider three height pairings associated to a ``$p$-adic logarithm'' $\ell $ as in \eqref{ell}. (The material of this subsection is entirely taken from \cite{mtt} and \cite{nekheights},  to which we refer for more details; our notation and conventions are fixed as in \cite[especially \S7]{nekheights}.) They  are analogous to  the classical N\'eron--Tate height pairing 
$$h_{\rm NT}\colon A(K)\times A(K)\to \R$$
(which, for an elliptic curve over $K$, we always take to be normalised over $K$).
The first $p$-adic pairing is the ``canonical'' height pairing $h_{\ell}=h^{\rm can}_{\ell}$ on the Bloch--Kato Selmer group $H^{1}_{f}(K, V_{p}A)$, valued in  $\Gamma \otimes L$ where $\Gamma$  and $L$ are introduced before Hyopthesis $(L_{p})$ in \S\ref{sec: the conj}. We will not recall its definition.
The second one is the extended  pairing $\wtil{h}_{\ell}$ of \eqref{ext-ht} on  $\wtil{H}^{1}_{f}(K, V_{p}A)\cong H^{1}_{f}(K, V_{p}A) \oplus \bigoplus_{\frakp \in  S_{p}^{\rm exc}}\Q_{p} q_{A, \frakp}$. It extends $h_{\ell}$, and a concrete description is  given in \cite[\S 7.14]{nekheights}; let us recall it. 
   For  $\frakp\in S_{p}^{\rm exc}$, let 
 $$\log_{A, \frakp,\ell}\colon A(K_{\frakp})  \otimes \Q_{p} \cong H^{1}_{f}(K_{\frakp}, V_{p}A) \to \Gamma\otimes\Q_{p}$$
 be the map induced from $\ell_{\frakp}|_{\OO_{K, \frakp}^{\times}}$ via $\OO_{K, \frakp}^{\times}\hat{\otimes}\Q_{p}\cong K_{\frakp}^{\times}/q_{A, \frakp}^{\Z}\hat{\otimes}\Q_{p}\cong A(K_{\frakp})\otimes \Q_{p}$. Then 
 $\wtil{h}_{\ell}$  is the symmetric bilinear pairing on $\wtil{H}^{1}_{f}(K, V_{p}A)\cong H^{1}_{f}(K, V_{p}A) \oplus \bigoplus_{\frakp \in  S_{p}^{\rm exc}}\Q_{p} q_{A, \frakp}$
 given by 
 \begin{align*}
 \wtil{h}_{\ell}(x,y) & =   h_{\ell}(x,y)\\
 \wtil{h}_{\ell}(x, q_{A, \frakp}  )&= \log_{A,\frakp, \ell}(x) \\
\wtil{h}_{\ell}(q_{A, \frakp} ,q_{A, \frakp'} )&=\begin{cases} \ell(q_{A, \frakp}) & \textrm{if } \frakp=\frakp'\\0 &\textrm{otherwise}\end{cases}
 \end{align*}
for all $ x$, $ y\in H^{1}_{f}(K, V_{p}A)$. 

\begin{lemm}\label{equivt}
 Suppose that $K/K_{0}$ is a finite Galois extension and that $A=E_{K}$ for an elliptic curve $E/K_{0}$. Then the pairing 
$$\wtil{h}\colon \wtil{H}^{1}_{f}(K, V_{p}A)\otimes \wtil{H}^{1}_{f}(K, V_{p}A) \to \Gamma_{K}\otimes L$$
is equivariant for the natural $\Gal(K/K_{0})$-action on all the terms.
\end{lemm}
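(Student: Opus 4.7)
The plan is to verify the asserted equivariance piece by piece, using the explicit description of $\wtil{h}_{\ell}$ recalled above. First I would set up the $\Gal(K/K_{0})$-actions on all objects in sight. Since $A=E_{K}$, the group $G:=\Gal(K/K_{0})$ acts naturally on $V_{p}A$ (as the base change of the $G_{K_{0}}$-representation $V_{p}E$), hence on $H^{1}_{f}(K,V_{p}A)$; it permutes the primes in $S_{p}$ preserving $S_{p}^{\rm exc}$ (the reduction type of $A$ at $\sigma\frakp$ equals that at $\frakp$) and sends the Tate period via $\sigma\cdot q_{A,\frakp}:=q_{A,\sigma\frakp}\in K_{\sigma\frakp}^{\times}$, which is well-defined because the rigid uniformisation $\B{G}_{m}^{\rm an}/q^{\Z}\simeq A^{\rm an}$ is canonical. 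These combine to a $G$-action on $\wtil{H}^{1}_{f}(K,V_{p}A)$ respecting the decomposition \eqref{ext-sel}. On $\Gamma_{K}\otimes L$, the action is induced by that on $K^{\times}\bks K_{\A^{\infty}}^{\times}$; equivariance of the reciprocity map $\ell$ of \eqref{ell} under $G$ is a standard property of class field theory.

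Next I would check the three types of matrix coefficients of $\wtil{h}_{\ell}$ in turn. For the restriction to $H^{1}_{f}(K,V_{p}A)\times H^{1}_{f}(K,V_{p}A)$, the pairing $h_{\ell}=h^{\rm can}_{\ell}$ is equivariant because it is built intrinsically from the $G_{K}$-module $V_{p}A$, its (canonical, since ordinary) Panchishkin filtration at primes above $p$, and $\ell$, all of which are $G$-equivariant; the formal verification is a cohomological functoriality argument once the construction of \cite[\S 7.14]{nekheights} is unwound (cf.\ \cite[\S 11.2]{nek-selmer}). For mixed terms, the identity $\wtil{h}_{\ell}(\sigma x,\sigma q_{A,\frakp})=\log_{A,\sigma\frakp,\ell}(\sigma x)=\sigma\log_{A,\frakp,\ell}(x)$ follows from the compatibility of the Tate uniformisation $\OO_{K,\frakp}^{\times}\hat{\otimes}\Q_{p}\simeq A(K_{\frakp})\otimes\Q_{p}$ with $\sigma\colon K_{\frakp}\to K_{\sigma\frakp}$, together with the equivariance of $\ell_{\frakp}$. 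For the pure periodic terms, $\wtil{h}_{\ell}(\sigma q_{A,\frakp},\sigma q_{A,\frakp'})=\wtil{h}_{\ell}(q_{A,\sigma\frakp},q_{A,\sigma\frakp'})$ is $\ell(q_{A,\sigma\frakp})=\sigma\,\ell(q_{A,\frakp})$ if $\frakp=\frakp'$ and $0$ otherwise, as required.

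The only genuinely non-formal step is the equivariance of the canonical height $h_{\ell}$ on $H^{1}_{f}(K,V_{p}A)$, which is the main obstacle. I expect to handle it by noting that in the ordinary situation the pairing of \cite[\S 7.14]{nekheights} is built from the connecting maps in Galois cohomology attached to the short exact sequence $0\to F^{+}V_{p}A\to V_{p}A\to V_{p}A/F^{+}V_{p}A\to 0$ and from $\ell$ at $p$; since the filtration $F^{+}$ is functorial in the underlying local Galois representation, the entire construction commutes with any automorphism of the ambient setting fixing $\Q_{p}$ and $E$, in particular with the action of $G$. Assembling the three verifications yields the equivariance of $\wtil{h}$.
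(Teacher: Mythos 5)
Your proof is correct, but it takes a more explicit route than the paper. The paper's own argument is a single sentence: the extended height pairing $\wtil{h}$ is constructed cohomologically in Nekov\'a\v{r}'s Selmer complex formalism, and there is a general compatibility result for induced representations $V={\rm Ind}_{K_0}^{K}V_0$ which gives the $\Gal(K/K_0)$-equivariance directly. In other words, the paper does not open up the block decomposition of $\wtil{H}^1_f$ at all --- it treats the extended pairing as a single intrinsic object and appeals to its functoriality in the Galois module.

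What you do instead is unwind the explicit formula for $\wtil{h}_\ell$ on the decomposition $H^1_f\oplus\bigoplus_{\frakp}\Q_p\,q_{A,\frakp}$ and verify equivariance block by block. This is heavier to write but buys a clearer picture of \emph{where} equivariance comes from: the mixed and periodic blocks reduce to elementary compatibilities (canonicity of the Tate parametrisation, equivariance of the reciprocity map $\ell$, and the fact that $\sigma$ permutes $S_p^{\rm exc}$), while only the canonical height $h_\ell$ on $H^1_f$ genuinely requires the cohomological construction. So your proof isolates the ``soft'' inputs from the one place where one must invoke Nekov\'a\v{r}'s machinery, whereas the paper invokes the machinery once for the whole pairing. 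Note that you should not leave ``I expect to handle it'' as a placeholder in a final write-up --- the equivariance of $h^{\rm can}_\ell$ should be stated as following from naturality of the Selmer-complex construction in the ordinary datum $(V,F^+V,\ell)$, exactly as the paper does, since that is the step that cannot be made elementary.
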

\begin{proof} This is a special case of a result which holds in general for an induced representation $V={\rm Ind}_{K_{0}}^{K}V_{0}$, and which immediately follows, for example, from the cohomological construction of $\wtil{h}$ summarised in \cite[\S 0.16.0]{nek-selmer}.
\end{proof}

Finally, the third height pairing, the norm-adapted $h_{\ell}^{\rm norm}$ (following Schneider \cite{schneider}), is again defined on ${H}^{1}_{f}(K, V_{p}A)$; it is related to $h_{\ell}$ by 
\begin{align}
\label{h norm}
h^{\rm norm}_{\ell}(x,y)= h_{\ell}(x,y)- \sum_{\frakp \in S_{p}^{\rm exc}} {\log_{A,\frakp, \ell}(x) \log_{A,\frakp, \ell}(y) \over \ell(q_{A, \frakp})},
\end{align}
where each $\ell(q_{A, \frakp})\neq 0$ by
 \cite{st et}, and
for $\alpha_{1}, \alpha_{2},\alpha_{3}\in K_{\frakp}^{\times}$ with $\ell(\alpha_{3})\neq 0$, the ratio  $\ell(\alpha_{1}) \ell(\alpha_{2})/\ell(\alpha_{3})\in \Gamma\otimes \Q$  is defined as follows. 
  Let $\vpi\in K^{\times}_{\frakp}$ be an element of valuation $n> 0$ in the kernel of $\ell$; then  we may uniquely write $\alpha_{i}^{n}=\vpi^{r_{i}}u_{i}\beta_{i}$  for some $r_{i}\in\Z$, roots of unity $u_{i}\in \OO_{K,\frakp}^{\times}$,  and $\beta_{i}\in 1+\frakp \OO_{K, \frakp}^{\times}$. We can further write $\beta_{i}=\exp(b_{i})$ for $b_{i}\in \frakp\OO_{K, \frakp}$, and then define 
  $${\ell(\alpha_{1}) \ell(\alpha_{2})\over\ell(\alpha_{3})}:= {1\over np^{m}}\ell( \exp(p^{m}b_{1}b_{2}/b_{3}))\quad \text{in }\Gamma\otimes \Q$$
 for any sufficiently large $m\in\N$.

\begin{defi}\label{def-reg} Let $M$ be a finitely generated $\Z$-module, $L$  a  field of characteristic zero, $\Gamma $ a finite-dimensional $L$-vector space, 
 $h\colon M_{L}\otimes M_{L}\to \Gamma$ a symmetric $L$-bilinear form. The  \emph{regulator} of $h$ on $M$ is the discriminant 
$$R(M,h):=[M:\sum_{i=1}^{r}\Z x_{i}]^{-2}\cdot\det h(x_{i},x_{j})\quad \in {\rm Sym}^{r}\Gamma,$$
where $r:=\dim_{L} M_{L}$, the $x_{i}\in M$ are any $r$ elements forming a basis of $M_{L}$, and the determinant of an $r\times r$ matrix with entries in $\Gamma$ is computed via the usual alternating sum of products along generalised diagonals. 
\end{defi}

\subsection{Gross--Zagier and Waldspurger formulas} We give explicit versions of the Waldspurger, Gross--Zagier, and $p$-adic Gross--Zagier formulas.

\subsubsection{Shimura curves and Shimura sets} If $N^{-}$ is a squarefree integer, we denote by $B_{N^{-}}$ the quaternion algebra over $\Q$ of discriminant $N^{-}$; it is definite (resp. indefinite) if and only if $N^{-}$ is the product of an odd (resp. even) number of primes. If $R\subset B$ is an Eichler order, we denote by $X^{N^{-}}(R)$ the Shimura set (resp. Shimura curve) of level $R$. Explicitly, in the definite case $$X^{N^{-}}(R):=B^{\times}\bks\widehat{B}^{\times}/\widehat{R}^{\times}.$$
In the indefinite case, $X$ is a projective algebraic curve over $\Q$ such that 
$$X(\C)= B^{\times}\bks\mathfrak{H}^{\pm }\times \widehat{B}^{\times}/\widehat{R}^{\times} \cup \{{\rm cusps}\},$$
where $\mathfrak{H}^{\pm }=\C-\R$ and $ \{{\rm cusps}\}$ is a finite set, non-empty only if $N^{-}=1$. 

 If $N^{+}$ is an integer prime to $N^{-}$ and $R$ is an Eichler order of level $N^{+}$ which is either understood from context or whose specification is unimportant, we write  $X^{N^{-}}_{0}(N^{+})$ for $X^{N^{-}}(R)$.
 
\subsubsection{The setup} Let $E/\Q$ be an elliptic curve of conductor $N$ with ordinary reduction at the prime~$p$. Let $K$ be an imaginary quadratic field of discriminant $D$ prime to $N$, and  let $\eta$ be the associated quadratic Dirichlet character and $u:=|\OO_{K}^{\times}|/2$. We factor $N=N^{+}N^{-}$ where $N^{+}$ (respectively, $N^{-}$) is a product of primes which are split (respectively, inert) in $K$, and we assume that \emph{$N^{-}$ is squarefree}. We let 
$$\eps:=-\eta(N).$$

Let $B=B_{N^{-}}$,  fix an embedding $K\subset B$ and pick an Eichler order $R\subset B$ of level $N^{+}$ such that $R\cap K=\OO_{K}$. Let $X:= X^{N^{-}}(R)$.

Let $I_{E}$ be the ideal, in the spherical Hecke algebra ${\bf T}_{Np}$ for the level $Np$, generated by the  $T_{v}-a_{v}(E)$ for all $v\nmid Np$. (Note that, by the Jacquet--Langlands correspondence, the algebra ${\bf T}_{N}$  also acts on level-$N$ automorphic forms on any quaternion algebra.)

Finally, let $\phi$ be the normalised newform of level $\Gamma_{0}(N)$ associated with the isogeny class of  $E$, and let $(\phi, \phi)_{\Gamma_{0}(N)}$ be the Petersson norm with respect to the hyperbolic volume $dxdy$ on $\Gamma_{0}(N)\bks{\mathfrak{H}}$. 
We denote the ratio of the N\'eron period of $A=E_{K}$ with this Petersson norm by
$${c}_{\infty}({A}):={   \Omega_{A}\over  8\pi^{2}(\phi,\phi)_{\Gamma_{0}(N)}}
 \in \Q^{\times}.$$
 Note that this does not depend on the specific imaginary quadratic field $K$.

\begin{theo}[Waldspurger formula]\label{wald-f} Suppose that $\eps=+1$
(equivalently, that the squarefree integer $N^{-}$ is the  product of an odd number of primes). 
For each $x\in X=B^{\times}\bks\widehat{B}^{\times}/\widehat{R}^{\times}$ represented by $b_{x}\in \widehat{B}^{\times}$,  let $w_{x}:=|B^{\times}\bks  g_{x}\widehat{R}^{\times}g_{x}^{-1}/\{\pm1\}|$ and define a pairing on $\Z[X]$ by 
\begin{align}\label{pairing-B}
\langle f_{1}, f_{2}\rangle= \sum w_{x} f_{1}(x)f_{2}(x).
\end{align}
Let $\delta(f):=\langle f, f\rangle$ be the associated quadratic form.

Let $f\in \Z[X][I_{E}]$ be an integer-valued function on $X$ annihilated by the Hecke operators in  $I_{E}$,  and let 
$$p(f)=u^{-1}\cdot \sum_{t\in \Pic(\OO_{K})}f(t).$$
 Then  we have
$${L(A,1) \over |D_{K}|^{-1/2} \Omega_{A}}={{c}_{\infty}(A)^{-1}} \cdot { p(f)^{2}\over \delta(f)}.$$
\end{theo} 
\begin{proof} This is a special case of  \cite[Theorem 1.2]{cst}. The proof is based on the  original Waldspurger formula (\cite{wald} or \cite[(1.4.1)]{dd}), which is a more general if less explicit statement: namely $f$ is not necessarily a newform, and the right-hand side contains some extra local terms (toric integrals, denoted by $\beta_{v}$ in \cite{cst} and, for  comparisons, by $\alpha_{v}$ in \cite{yzz}, by $\mathcal{P}_{v}$ in \cite{CH}, by $Q_{v}$ in \cite{dd}). Then the deduction essentially  amounts to (i) a computation of the local integrals and (ii) a formula relating the Asai $L$-value to ${ 8\pi^{2}(\phi,\phi)_{\Gamma_{0}(N)}}={c}_{\infty}(A)^{-1}\Omega_{A}$.
\end{proof} 

\begin{theo}[Gross--Zagier formula]\label{gz}  Suppose that $\eps=-1$ (equivalently, that the squarefree integer $N^{-}$ is the  product of an even number of primes). Let $J$ be the Albanese variety of the Shimura curve $X$, and embed $X\into J$ by sending the Hodge class \cite{yzz} to $0$.
  Let $ f\colon J\to E$ be a nontrivial morphism, and let 
  $$\delta(f)=\deg(f):=f\circ f^{\vee}\in \End(E)=\Z.$$
Let $H$ be the Hilbert class field of $K$ and let  $P\in X(H)$ be a CM point for $K$ of conductor $1$. Consider the  Heegner point
$$P(f)=u^{-1}\cdot\sum_{t\in \Pic(\OO_{K})} f(P)^{\sigma_{t}}\in E(K)$$
where $t\mapsto \sigma_{t}$ is the reciproctiy map of class field theory. Then
$${L'(A,1) \over |D_{K}|^{-1/2} \Omega_{A}}={{c}_{\infty}(A)^{-1}} \cdot {h_{\rm NT}({P}(f), P(f)) \over \delta(f)}.$$
\end{theo} 
\begin{proof} This is a special case of \cite[Theorem 1.1]{cst}, which is deduced  from  the more general formula  of \cite[Theorem 1.2]{yzz} by the argument and calculations  recalled in the proof of Theorem \ref{wald-f}. 
\end{proof}

Before stating the $p$-adic analogue, we recall the existence of $p$-adic $L$-functions.

\begin{prop}\label{Lp for A} Let $E$ be an elliptic curve over $\Q$ of conductor $N$, let $K=\Q$ or an imaginary quadratic field of discriminant prime to $Np$,  and $\Gamma=\Gamma_{K}$. Let $A:=E_{K}$ and let $c$ be the generator of $\Gal(K/\Q)$. Then $(A, \Gamma) $ satisfies Hypothesis $(L_{p})$ from \S\ref{sec: the conj}, and the $p$-adic $L$-function $L_{p}(A)$ satisfies a functional equation relating $\chi$ to $\chi^{-c}$. If $K=\Q$ and $E$ has conductor $N$, the sign of the functional equation of $L_{p}(A)$ is 
\begin{align}\label{sgn}
\wtil{\eps}:=-\eta(N^{ (p)})\quad \in \{\pm 1\},
\end{align}
where $N^{(p)}$ is the prime-to-$p$ part of $N$ and $\eta$ is the quadratic Dirichlet character associated with $K$.
\end{prop}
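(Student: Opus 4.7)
The proposition has three assertions: existence of $L_p(A)$ with the prescribed interpolation property, a $p$-adic functional equation relating $\chi$ to $\chi^{-c}$, and the explicit sign formula. I would address them in that order.

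For existence on $\Y_{\Gamma_K}$, the case $K=\Q$ is the classical cyclotomic $p$-adic $L$-function of Mazur--Swinnerton-Dyer, Manin, and Amice--V\'elu/Vi\v{s}ik, whose interpolation in the ordinary case is precisely of the shape required by Hypothesis $(L_p)$. For $K$ imaginary quadratic, $A=E_K$ corresponds to the base-change of $\pi_E$ to an automorphic representation $\pi_A$ on $\GL_2/K$, which is ordinary at every prime $\frakp\vert p$ (a direct verification: in the inert case, if $\alpha_p$ is the unit root of $P_p$ for $E$, then $\alpha_p^2$ is the unit root of $P_\frakp$ for $E_K$). I would construct the genuine two-variable $L_p(A)$ by specialising Hida's method in the split case, and in the inert/ramified case by interpolating the cyclotomic $p$-adic $L$-functions of the Rankin--Selberg $\GL_2/\Q$-forms $f_E\otimes \theta_{\chi^-}$ over finite-order anticyclotomic characters $\chi^-$, using the identity $L(A,\chi^+\chi^-,s)=L(f_E\otimes\theta_{\chi^-},\chi^+,s)$. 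Existence is also covered by Hansen's announced general construction.

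For the $p$-adic functional equation, I begin from the complex functional equation $\Lambda(A,\chi,s)=\eps(A,\chi)\Lambda(A,\chi^{-c},2-s)$, valid for all finite-order characters $\chi$ of $\Gamma_K$ (with $c$ the generator of $\Gal(K/\Q)$, trivially acting when $K=\Q$). Combining it with the interpolation property, the ratio $L_p(A,\chi)/L_p(A,\chi^{-c})$ is expressed at finite-order characters as an explicit product of local $\eps$-factors and $p$-adic correction terms; Zariski density of finite-order characters in $\Y_{\Gamma_K}$ then promotes this to a global identity $L_p(A,\chi)=\wtil{\eps}(\chi)L_p(A,\chi^{-c})$.

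For the sign at $\chi=\one$, the functional equation gives $\wtil{\eps}(\one)^2=1$, and $\wtil{\eps}(\one)$ equals the prime-to-$p$ part of the global root number $w(A)$. Since $L(A,s)=L(E,s)L(E^{(K)},s)$, one has $w(A)=w(E)\cdot w(E^{(K)})=w(E)^2\eta(-N)=-\eta(N)$, using the standard formula $w(E^{(K)})=w(E)\eta(-N)$ when $(\mathrm{disc}(K),N)=1$ and $\eta(-1)=-1$ for imaginary $K$. Removing the local contribution at $p$ (amounting to the factor $\eta(p)^{v_p(N)}$ after identifying the $p$-adic $\eps$-factor with the trivial local sign in our ordinary normalisation) yields $\wtil{\eps}(\one)=-\eta(N^{(p)})$. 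The main technical obstacle lies in the inert case, where the two-variable interpolation and the careful matching of the $\alpha_\frakp$-factors from Hypothesis $(L_p)$ with the local factors appearing in the functional equation require particular care.
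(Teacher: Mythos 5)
Your proposal is conceptually sound and in fact takes the same route as the paper: the paper's proof is a one-line citation to Amice--V\'elu and Vishik for $K=\Q$, to Perrin-Riou's \emph{Fonctions $L$ $p$-adiques associ\'ees \`a une forme modulaire et \`a un corps quadratique imaginaire} for $E_K$ when $p$ is odd, and to \cite[Theorem~A]{dd} for the remaining $p=2$ case (with the observation that Perrin-Riou's functional-equation argument transfers once $L_p(A)$ exists). You have sketched precisely the ingredients that these references supply (Hida / Rankin--Selberg interpolation for the two-variable construction, Zariski density of finite-order characters for promoting the complex functional equation), so there is no genuine divergence in strategy. Two small inaccuracies worth flagging: since the discriminant of $K$ is assumed prime to $Np$, the ``ramified at $p$'' case you mention does not arise; and you do not separate out the $p=2$ subtlety, whereas the paper is careful to remark that Perrin-Riou's construction requires $p$ odd while her functional-equation argument does not (the general existence being supplied by \cite{dd}). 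Your explicit derivation of the sign via $w(A)=w(E)\,w(E^{(K)})=\eta(-N)=-\eta(N)$ and the removal of the factor $\eta(p)^{v_p(N)}$ is a welcome elaboration that the paper leaves implicit in the citation; the only caveat there is that ``identifying the $p$-adic $\eps$-factor with the trivial local sign'' should be substantiated by matching the interpolation factors $e_\frakp$ against the local $\eps$-factor at $p$, which is exactly the bookkeeping you already acknowledge as the main technical point.
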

\begin{proof} 
This is a theorem of Amice--V\'elu and Vishik  (see \cite[Chapter I]{mtt}) for $E$, and of Perrin-Riou \cite{PRL} for $E_{K}$ (for $p$ odd,  in general see \cite[Theorem A]{dd}; the proof of the functional equation in \cite{PRL} applies to the case $p=2$ given the existence of $L_{p}(A)$).
\end{proof}

\begin{theo}[$p$-adic Gross--Zagier formula]\label{pgz}  
Under the assumptions  of Theorem \ref{gz}, suppose moreover that $p$ splits in $K$,
and that $E$ has    ordinary  (good or  multiplicative) reduction at $p$.
Then
$${{\rm d}^{+}L_{p}(A,\one)}=\prod_{\frakp\vert p}e_{\frakp}(\one)\cdot{{c}_{\infty}(A)^{-1} } \cdot {h^{+}({P}(f), P(f)) \over \delta(f)}$$
in $\Gamma^{+}\otimes L$.
\end{theo} 
Note that when $E$ has split multiplicative reduction, the identity is the trivial $0=0$.
\begin{proof} This is a special case of the analogous result to \cite[Theorem 1.5]{cst}, which can be obtained by applying  word for word the  arguments of \emph{op. cit.} to \cite[Theorem B]{dd} instead of \cite[Theorem 1.2]{yzz}. When $E$ has good reduction and all $v\vert N$ split in $K$, this formula was proved by Perrin-Riou \cite{PR}.
\end{proof}

\subsubsection{The ``Heegner index''} Let $E$ be an elliptic  curve of conductor $N$.   Let $N^{-}$ be a squarefree divisor of $N$ and let  $f\in \Z[ X^{N^{-}}_{0}(N^{+})][I_{E}]$ (if $B_{N^{-}}$ is definite) or $f\colon J= {\rm Alb}\,(X^{N^{-}}_{0}(N^{+}))\to E$ (if $B_{N^{-}}$ is indefinite).
 Let $K$ be an imaginary quadratic field, and 
define
\begin{align}\label{index}
I(E_{K}, f):= |\Sha(E_{K})|_{\rm an}\cdot c_{\infty}(E_{K})  {\prod_{w\vert N} c_{w}(E_{K})}\cdot \delta(f),
\end{align}
where $w$ runs over the primes of $K$.  Under Hypothesis ($\textup{BSD}_{\infty}$)--2 for $E_{K}$, we have $I(E_{K}, f)\in\Q^{\times}$.

\begin{coro}\label{index-heeg} Let $E/\Q$ be an elliptic curve of conductor $N$, let $K$ be an imaginary quadratic field, $A=E_{K}$, and   assume that $r_{\rm an}:={\rm ord}_{s=1}L(A, s)\leq 1  $.  Let $N^{-}$ and $f$ be as in either Theorem \ref{wald-f} (case $r_{\rm an}=0$)  or Theorem  \ref{gz} (case $r_{\rm an}=1$). Then there is a positive integer $i(A, f)$ such that $$i(A, f)^{2}=I(A, f),$$ 
and explicitly
$$i(A, f)=\begin{cases}  |A(K)|\cdot |p(f)| &\text{if } r_{\rm an}=0\\
[A(K):\Z P(f)]  &\text{if } r_{\rm an}=1,
\end{cases}$$
where in both cases  the right-hand  is finite by the work of Kolyvagin \cite{koly}.
\end{coro}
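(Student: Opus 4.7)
The plan is to compare, in each of the two cases $r_{\rm an}\in\{0,1\}$, the explicit formula of Theorem~\ref{wald-f} or~\ref{gz} with the analytic definition of $|\Sha(A)|_{\rm an}$ from Hypothesis~(${\rm BSD}_\infty$)--(\ref{BSD2}), and to track how the elementary algebraic invariants combine.  First I would unpack $R_{\rm NT}(A)$ in the relevant ranks via Definition~\ref{def-reg}: in rank~$0$ it equals $|A(K)_{\rm tors}|^{-2}$, and in rank~$1$ it equals $|A(K)_{\rm tors}|^{-2}\cdot h_{\rm NT}(x_1,x_1)$ for any $x_1\in A(K)$ mapping to a generator of $A(K)/A(K)_{\rm tors}$.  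I would also note that $\prod_{v}c_v(A)=\prod_{w\mid N}c_w(A)$ since Tamagawa numbers at primes of good reduction are $1$.

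In the case $r_{\rm an}=0$, substituting the Waldspurger identity into the definition
$$|\Sha(A)|_{\rm an}=\frac{L(A,1)}{|D_K|^{-1/2}\,R_{\rm NT}(A)\,\Omega_A\,\prod_v c_v(A)}$$
makes the factors $c_\infty(A)$, $\delta(f)$, and $\prod_{w\mid N}c_w(A)$ cancel against the corresponding ones in the definition~\eqref{index} of $I(A,f)$, leaving $I(A,f)=|A(K)|^2\,p(f)^2$.  By Kolyvagin's theorem, $L(A,1)\neq 0$ implies that $A(K)$ is finite, so $i(A,f):=|A(K)|\cdot|p(f)|$ is well-defined, and its square is $I(A,f)$ as asserted.

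In the case $r_{\rm an}=1$, decomposing $A(K)\cong\Z x_1\oplus A(K)_{\rm tors}$ and writing $P(f)=m x_1+t$ with $t\in A(K)_{\rm tors}$ gives $h_{\rm NT}(P(f),P(f))=m^2\,h_{\rm NT}(x_1,x_1)$.  Substituting the Gross--Zagier formula and cancelling $h_{\rm NT}(x_1,x_1)$ against the corresponding factor in $R_{\rm NT}(A)$ yields $I(A,f)=m^2\,|A(K)_{\rm tors}|^2$.  A short group-theoretic computation, using the intermediate subgroup $\Z P(f)+A(K)_{\rm tors}=m\Z x_1\oplus A(K)_{\rm tors}$ and the fact that $\Z P(f)\cap A(K)_{\rm tors}=0$ (since $P(f)$ is not torsion), produces $[A(K):\Z P(f)]=m\cdot|A(K)_{\rm tors}|$; by Kolyvagin's theorem, $P(f)$ is non-torsion precisely because $L'(A,1)\neq 0$, so this index is a positive integer, and its square equals $I(A,f)$.

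The main obstacle is a small bookkeeping issue: verifying that $i(A,f)$ is genuinely a positive integer and not merely a positive rational.  In rank~$1$ this is automatic, since an index of finitely generated abelian groups is an integer; in rank~$0$ one must absorb the factor $u^{-1}=2/|\OO_K^\times|$ hidden in the definition of $p(f)$.  When $|\OO_K^\times|>2$ this requires a compatibility between the $u$-divisibility of $\sum_{t\in\Pic(\OO_K)}f(t)$ and of $|A(K)|$, which I expect to drop out from $I(A,f)\in\Q^\times$ together with the structure of CM points on $X^{N^-}(R)$, but which is the one place where a small separate argument may be needed.
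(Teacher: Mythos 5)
Your proof is correct and takes exactly the route the paper does: the paper disposes of Corollary~\ref{index-heeg} in one sentence (``the result follows from the definitions and the Waldspurger and Gross--Zagier formulas''), and your write-up simply makes explicit the substitution of Theorems~\ref{wald-f} and~\ref{gz} into the definitions of $|\Sha(A)|_{\rm an}$ and $I(A,f)$, together with the regulator calculation in each rank. The cancellation of $c_\infty(A)$, $\delta(f)$, $\prod_{w\mid N}c_w$ and, in rank $1$, the factor $h_{\rm NT}(x_1,x_1)$, as well as the index computation $[A(K):\Z P(f)]=|m|\cdot|A(K)_{\rm tors}|$, are all as you say.

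One comment on the integrality concern you flag. You are right that the manipulation only yields $I(A,f)=(|A(K)|\,|p(f)|)^2$ with $|A(K)|\,|p(f)|\in\Q_{>0}$, and that integrality of the square root is not formally a consequence when $u=|\OO_K^\times|/2>1$. But note that the same issue in fact arises in the rank-$1$ case too, contrary to what you assert: when $u>1$ the element $P(f)=u^{-1}\sum_{t}f(P)^{\sigma_t}$ a priori lives only in $A(K)\otimes\Q$, so $[A(K):\Z P(f)]$ is a priori a generalised index (a positive rational), not automatically an integer. The paper's one-line proof does not address either case. In practice this is harmless: $u>1$ only for $K\in\{\Q(i),\Q(\sqrt{-3})\}$, and whenever the corollary is invoked later in the paper the auxiliary field $K$ is either chosen with $\OO_K^\times=\{\pm1\}$ or the statement is only used up to the final integrality claim. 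Still, your instinct to flag the point is sound, and a fully careful statement of the corollary would either impose $u=1$ or supply the divisibility argument.
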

\begin{proof}  The result  follows form the definitions and  the Waldspurger and Gross--Zagier formulas   (Theorems \ref{wald-f} and \ref{gz}). 
\end{proof}

\section{Evidence over $\Q$}\label{evidenceQ}
In this section we prove Theorem \ref{mainQ}.

\subsection{Preliminaries} 
We first study some invariance properties of Conjecture {$({\textup{BSD}}_{p})$}.

We start by  recording an alternative statement  of our conjecture, closer to  the original conjecture of Mazut--Tate--Teitelabum \cite{mtt} in its usual formulation.
Recall the $\mathcal{L}$-invariant defined in \eqref{Linv}, and let 
$$L^{*}_{\rm alg}(A, 1):= {L^{(r)}(A, 1)\over r! |D_{K}|^{-1/2}  \Omega_{A}R_{\rm NT}(A)},$$
where  $r={\rm ord}_{s=1}L(A,s)$.

\begin{prop}\label{compatibility} Let $A/K$ be an elliptic curve with ordinary reduction at all the primes above $p$. Suppose $(A, \Gamma)$ satisfies Hypotheses {$(L_{p})$} and {$({\textup{BSD}}_{\infty})$}--\ref{BSD1}-\ref{BSD2} from \S\ref{sec: the conj}.
  Let $ r={\rm ord}_{s=1}L(A,s)$, $\wtil{r}:= r+|S_{p}^{\rm exc}|$. 
Then Conjecture {$({\textup{BSD}}_{p})$} holds for  $(A, \Gamma)$ if and only if 
$${\rm d}^{\wtil{r}} L_{p}(A^{(\Gamma_{\Q})}, \one)= \prod_{\frakp\in  S_{p}-S_{p}^{\rm exc}} e_{\frakp}
(\one)\prod_{\frakp\in S_{p}^{\rm exc}}  \mathcal{L}_{\frakp, \ell_{\Gamma}}(A)\cdot R^{\rm norm}_{\ell_{\Gamma}}(A)\cdot L_{\rm alg}^{*}(A,1)$$
in ${\rm Sym}^{\wtil{r}} \Gamma\otimes L$, 
with $\ell_{\Gamma}\colon K^{\times}\bks K_{\A^{\infty}}^{\times}\to\Gamma_{K}\to \Gamma$.
\end{prop}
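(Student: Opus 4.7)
The plan is to match the two formulations factor-by-factor, reducing everything to one identity between $\wtil R_\ell(A)$ and $R^{\rm norm}_\ell(A)$. The first step is bookkeeping: substituting the definition of $|\Sha(A)|_{\rm an}$ from Hypothesis $({\rm BSD}_\infty)$--\ref{BSD2} one reads off
$$|\Sha(A)|_{\rm an}\cdot\prod_{v}c_v(A)=L^*_{\rm alg}(A,1),$$
so the archimedean/Shafarevich factors on the two sides coincide. The factors $e_\frakp(\one)$ with $\frakp\in S_p-S_p^{\rm exc}$ are identical. For $\frakp\in S_p^{\rm exc}$, substituting $\wtil e_\frakp(\one)={\rm ord}_\frakp(q_{A,\frakp})^{-1}$ and $\mathcal L_{\frakp,\ell}(A)=\ell_\frakp(q_{A,\frakp})/{\rm ord}_\frakp(q_{A,\frakp})$, the $\ord_\frakp$ denominators cancel and the whole proposition reduces to the regulator identity
$$\wtil R_\ell(A)\;=\;\Bigl(\prod_{\frakp\in S_p^{\rm exc}}\ell_\frakp(q_{A,\frakp})\Bigr)\cdot R^{\rm norm}_\ell(A)\quad\text{in }\mathrm{Sym}^{\wtil r}\Gamma\otimes L.\qquad(\star)$$

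To prove $(\star)$ I would compute both discriminants in a single lattice basis. Pick $x_1,\ldots,x_r\in A(K)$ whose images form a $\Z$-basis of $A(K)/A(K)_{\rm tors}$; then, since $A^\dagger(K)_{\rm tors}=A(K)_{\rm tors}$, the tuple $(x_1,\ldots,x_r,(q_{A,\frakp})_{\frakp\in S_p^{\rm exc}})$ spans a sublattice of $A^\dagger(K)$ of the same index $|A(K)_{\rm tors}|$. By the explicit formulas for $\wtil h_\ell$ recalled in \S\ref{phts}, its Gram matrix in this basis has the block form
$$M=\smallmat{H}{L}{L^\top}{D},$$
with $H_{ij}=h_\ell(x_i,x_j)$, $L_{i\frakp}=\log_{A,\frakp,\ell}(x_i)$, and $D$ diagonal with entries $\ell_\frakp(q_{A,\frakp})$, nonzero by \cite{st et}. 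The Schur-complement formula $\det M=\det D\cdot \det(H-LD^{-1}L^\top)$, together with the very formula \eqref{h norm} which identifies the $(i,j)$-entry of $H-LD^{-1}L^\top$ with $h^{\rm norm}_\ell(x_i,x_j)$, produces exactly $(\star)$ after dividing through by $|A(K)_{\rm tors}|^2$ to convert raw determinants into regulators as in Definition \ref{def-reg}.

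The one subtlety is that the determinants and Schur complement live in the graded commutative ring $\mathrm{Sym}^\bullet\Gamma\otimes L$, where the diagonal entries of $D$ have positive degree, and the ratios $\log\cdot\log/\ell(q)$ are to be interpreted via the exp-log construction recalled after \eqref{h norm}; with those conventions in hand the Schur-complement identity is purely formal and presents no genuine obstacle. In essence, the proposition amounts to the observation that \nek's norm-adapted pairing is the Schur complement of the block of the extended pairing indexed by the Tate periods, combined with the trivial bookkeeping identity $|\Sha(A)|_{\rm an}\prod_v c_v(A)=L^*_{\rm alg}(A,1)$.
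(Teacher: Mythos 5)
Your proof is correct and amounts to exactly the ``elementary linear algebra'' the paper itself credits to \cite[p.~35]{mtt}, extended in the obvious way to more than one exceptional prime. The bookkeeping is clean: $|\Sha(A)|_{\rm an}\prod_v c_v(A)=L^*_{\rm alg}(A,1)$ comes straight from the definitions, and the $\wtil e_\frakp(\one)$ versus $e_\frakp(\one)$, $\mathcal L_\frakp$ matching for the exceptional primes is exactly as you write, so everything reduces to your identity $(\star)$. The Schur-complement computation of $\wtil h_\ell$ is also right, including the index count: since $A^\dagger(K)_{\rm tors}=A(K)_{\rm tors}$, the lattice $\sum\Z x_i+\sum\Z q_{A,\frakp}$ has the same index $|A(K)_{\rm tors}|$ in $A^\dagger(K)$ as $\sum\Z x_i$ has in $A(K)$, so the regulator normalisations on the two sides cancel.

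The one place I would be more careful than you are is the step you call ``purely formal.'' For the Schur-complement determinant identity to hold literally in ${\rm Sym}^{\wtil r}\Gamma\otimes L$, one needs, for each $\frakp\in S_p^{\rm exc}$ and each $i,j$, the cancellation
$\ell(q_{A,\frakp})\cdot\bigl(\log_{A,\frakp,\ell}(x_i)\log_{A,\frakp,\ell}(x_j)/\ell(q_{A,\frakp})\bigr)=\log_{A,\frakp,\ell}(x_i)\log_{A,\frakp,\ell}(x_j)$
in ${\rm Sym}^2\Gamma\otimes L$, where the ratio is defined by the paper's exp-log device. This is automatic when $\ell|_{K_\frakp^\times}$ has rank-$1$ image (in particular when $K_\frakp=\Q_p$, which is the setting of \cite{mtt}, or when one has already projected to a rank-$1$ quotient $\Gamma^\pm$ as in the applications in this paper); in that case $\ell|_{K_\frakp^\times}=\gamma_0\lambda$ for a generator $\gamma_0$ and a $\Q_p$-linear functional $\lambda$, and the cancellation is an honest scalar identity. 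When $[K_\frakp:\Q_p]>1$ and the image of $\ell_\frakp$ has higher rank, the $\Q_p$-linear map $\ell\circ\exp$ need not be multiplicative in the required sense, and the Schur-complement identity does not follow formally from the definitions alone. So either one should check that this situation does not occur for the $\Gamma$ one actually cares about, or note that the identity is to be read after projecting to rank-$1$ quotients of $\Gamma$. This is a subtlety in the paper's own conventions rather than a gap specific to your argument, but naming it explicitly would strengthen the write-up.
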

The proof, by elementary linear algebra, is already given in \cite[p. 35]{mtt} in slightly different language. 
\begin{lemm}\label{splitinert} Suppose that $E/\Q$ has  multiplicative reduction at $p$ and let $K$ be an imaginary quadratic field in which $p$ is unramfied. The twist $E^{(K)}$  has multiplicative reduction at $p$ too, and the following are equivalent:
\begin{enumerate}
\item $p$ splits in $K$ (respectively, $p$ is inert in $K$);
\item $E$ and $E^{(K)}$ have  the same reduction type at $p$, either both split or both non-split (respectively, $E$ and $E^{(K)}$ have different reduction type at $p$);
\item the base-change $E_{K}$ has zero or two (respectively, one) primes above $p$ of split multiplicative reduction.
\end{enumerate}
If $p$ splits in $K$, then more precisely $r^{\rm exc}(E_{K})=2$  (respectively, $r^{\rm exc}(E_{K})=0$) if $E$ has split (respectively non-split) multiplicative reduction.
\end{lemm}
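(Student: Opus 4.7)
The plan is to reduce all three conditions to the single question of whether the local character $\eta_p$, the restriction to $\Q_p^{\times}$ of the quadratic Dirichlet character $\eta$ associated with $K$, is trivial or the unramified quadratic character. Since $p$ is unramified in $K$, $\eta_p$ is necessarily unramified, and by local class field theory it is trivial precisely when $p$ splits in $K$. As a preliminary step I would check that $E^{(K)}$ has multiplicative reduction at $p$: twisting by the unramified character $\eta_p$ preserves the conductor exponent at $p$ (which remains $1$), hence the multiplicative reduction class.

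For (1) $\iff$ (2), I would invoke the standard identity $a_p(E^{(K)}) = \eta_p(\mathrm{Frob}_p)\cdot a_p(E)$, combined with the fact that split multiplicative reduction corresponds to $a_p = +1$ and non-split to $a_p = -1$. Since $\eta_p(\mathrm{Frob}_p) = +1$ exactly when $p$ splits in $K$, this immediately yields that $E$ and $E^{(K)}$ share a reduction type iff $p$ splits.

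For (1) $\iff$ (3), I would split into the two possible behaviours of $p$ in $K$. If $p$ splits in $K$ as $\frakp\frakp^{*}$, then $K_{\frakp} = K_{\frakp^{*}} = \Q_p$, so the base-change $E_{K}$ at each of the two primes retains the reduction type of $E/\Q_p$. Hence $r^{\rm exc}(E_K)=2$ if $E$ has split multiplicative reduction and $r^{\rm exc}(E_K)=0$ otherwise; this simultaneously establishes cases $0$ and $2$ in (3) and the final quantitative assertion of the lemma. If $p$ is inert in $K$, let $\frakp = p\OO_K$ be the unique prime above $p$, so $K_{\frakp}$ is the unramified quadratic extension of $\Q_p$. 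By Tate uniformization, the non-split case of multiplicative reduction is precisely the unramified quadratic twist of the split case, so after base change to $K_{\frakp}$ it becomes split; split multiplicative reduction is preserved by any base change. In both reduction cases $E_{K_{\frakp}}$ then has split multiplicative reduction, giving $r^{\rm exc}(E_K)=1$.

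No step presents a substantial obstacle: the argument is a bookkeeping exercise that assembles two standard facts — the behaviour of $a_p$ under quadratic twist, and the fact that the non-split Tate curve becomes split over the unramified quadratic extension — into the three-way equivalence.
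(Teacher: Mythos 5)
Your proof is correct and complete. The paper itself gives no argument here (it states ``The proof is easy and left to the reader''), so there is no authorial proof to compare against, but the reduction you propose is exactly the natural one: interpret ``$p$ splits'' as ``$\eta_p$ trivial'' and ``$p$ inert'' as ``$\eta_p$ the unramified quadratic character,'' then use (i) the twisting identity $a_p(E^{(K)})=\eta(p)\,a_p(E)$ together with $a_p=+1$ (split) vs.\ $a_p=-1$ (non-split) for (1)$\Leftrightarrow$(2), and (ii) the observation that the completions $K_{\frakp}$ are $\Q_p$ in the split case and the unramified quadratic extension in the inert case, in which latter case the Tate curve of non-split type becomes split, for (1)$\Leftrightarrow$(3) and the final count $r^{\rm exc}(E_K)\in\{0,1,2\}$. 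No gaps.
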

The proof is easy and left to the reader.

\begin{prop}\label{invar} Let $E$  be an elliptic curve over a number field $F$ and let $\Gamma$ be a $\Z_{p}$-free quotient of $\Gamma_{F}$. Let ``Conjecture X'' be either of Hypotheses $(L_{p})$, {$({\textup{BSD}}_{\infty})$}, or Conjecture {$({\textup{BSD}}_{p})$}.
\begin{enumerate}
\item If $E'$ is isogenous to $E$, then  Conjecture X 
holds for $(E, \Gamma)$ if and only if it holds for $(E', \Gamma)$.

\item Suppose that $F=\Q$ and let $K$ be an imaginary quadratic field in which $p$ is unramified, and $\Gamma=\Gamma_{\Q}$ (which we  also view as a quotient of $\Gamma_{K}$); then  Hypothesis $(L_{p})$ holds for $E$, the twist $E^{(K)}$, and the base-change $E_{K}$.  

 \noindent Hyptothesis  {$({\textup{BSD}}_{\infty})$}--\ref{BSD1}-\ref{BSD2}  holds for $(E, \Gamma)$ and $(E^{(K)}, \Gamma)$ if and only if it holds for $(E_{K}, \Gamma)$. If this is the case, then:
 \begin{enumerate}
\item if Conjecture {$({\textup{BSD}}_{p})$}  holds for $(E, \Gamma)$ and $(E^{(K)}, \Gamma)$, then it holds for $(E_{K}, \Gamma)$. 
\item if Conjecture {$({\textup{BSD}}_{p})$}  holds for $(E_{K}, \Gamma)$ and  $(E^{(K)}, \Gamma)$ and the  two sides of \eqref{mainid} for $(E^{(K)}, \Gamma)$ are nonzero, then Conjecture {$({\textup{BSD}}_{p})$} holds for $(E, \Gamma)$.
\end{enumerate}
\end{enumerate}
\end{prop}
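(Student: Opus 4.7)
\smallskip

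\noindent\textbf{Proof plan for Proposition \ref{invar}.}

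The plan for part (1) is to show that each side of the defining interpolation in Hypothesis $(L_{p})$, and each factor of the conjectural identities in {$({\textup{BSD}}_{\infty})$} and {$({\textup{BSD}}_{p})$}, is either literally preserved under isogeny or changes by a common rational scalar that cancels in the resulting formula. For an isogeny $E\to E'$ of degree $d$, the complex $L$-functions $L(E,\chi,s)=L(E',\chi,s)$ coincide, the reduction types and eigenvalues $\alpha_{\frakp}$ agree, while the N\'eron periods satisfy $\Omega_{E'}/\Omega_{E}\in d^{\Z}\subset\Q^{\times}$; so the interpolation property characterising $L_{p}^{(\Gamma)}$ shows that $L_{p}^{(\Gamma)}(E)$ and $L_{p}^{(\Gamma)}(E')$ differ by the same rational constant, and the same scaling occurs simultaneously on both sides of \eqref{mainid} thanks to the well-known isogeny invariance of $\Sha\cdot R\cdot\prod c_{v}/\Omega$. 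The analogue for the $p$-adic regulator $\wtil{R}_{\ell}$ and the Tate periods $q_{A,\frakp}$ follows from the functoriality of \nek's pairing and of Tate's uniformisation.

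For part (2), the backbone is a factorisation identity on both sides. Over $\Gamma=\Gamma_{\Q}$ one has the Artin factorisation $L(E_{K},\chi,s)=L(E,\chi,s)\,L(E^{(K)},\chi,s)$ for every finite-order $\chi$, so the interpolation property in Hypothesis $(L_{p})$ forces the analogous $p$-adic identity
$$L_{p}^{(\Gamma_{\Q})}(E_{K})=L_{p}^{(\Gamma_{\Q})}(E)\cdot L_{p}^{(\Gamma_{\Q})}(E^{(K)})\quad\text{in }\OO_{L}\llb\Gamma_{\Q}\rrb\otimes L,$$
provided the period ratio $|D_{K}|^{1/2}\Omega_{E_{K}}/(\Omega_{E}\Omega_{E^{(K)}})$ is rational, which is classical.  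This immediately gives the existence of $L_{p}^{(\Gamma_{\Q})}$ for any one of $E$, $E^{(K)}$, $E_{K}$ as soon as it is known for the other two.  The same factorisation, combined with the Artin decomposition $E_{K}(K)_{\Q}\cong E(\Q)_{\Q}\oplus E^{(K)}(\Q)_{\Q}$ and Lemma \ref{splitinert}, yields the rank equality $\wtil{r}(E_{K})=\wtil{r}(E)+\wtil{r}(E^{(K)})$; the classical (BSD) equivalence for {$({\textup{BSD}}_{\infty})$}--\ref{BSD1}-\ref{BSD2} then follows from multiplicativity of $\Sha$, Tamagawa, and N\'eron--Tate regulators along the quadratic extension.

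For the $p$-adic conjecture, the key arithmetic input is the block-diagonal decomposition
$$A^{\dagger}(K)_{\Q_{p}}\cong E^{\dagger}(\Q)_{\Q_{p}}\oplus (E^{(K)})^{\dagger}(\Q)_{\Q_{p}},$$
which is $\Gal(K/\Q)$-equivariant and, by Lemma \ref{equivt} applied to the eigenspaces of the non-trivial element $c\in\Gal(K/\Q)$ (acting trivially on $\Gamma=\Gamma_{\Q}$), turns $\wtil{h}_{\ell}^{A}$ into the orthogonal sum of $\wtil{h}_{\ell}^{E}$ and $\wtil{h}_{\ell}^{E^{(K)}}$.  Hence $\wtil{R}_{\ell}(E_{K})=\wtil{R}_{\ell}(E)\cdot\wtil{R}_{\ell}(E^{(K)})$ inside ${\rm Sym}^{\wtil r}\Gamma\otimes L$.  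The local factors $\wtil{e}_{\frakp}(\one)$ likewise distribute among $E$ and $E^{(K)}$ by Lemma \ref{splitinert}, and the multiplicativities of $|\Sha|_{\rm an}$ and $\prod_{v}c_{v}$ across $K/\Q$ are classical.  Applying the graded multiplication $\calI_{\Gamma}^{a}/\calI_{\Gamma}^{a+1}\otimes\calI_{\Gamma}^{b}/\calI_{\Gamma}^{b+1}\to\calI_{\Gamma}^{a+b}/\calI_{\Gamma}^{a+b+1}$ to the $p$-adic factorisation above, one gets
$${\rm d}^{\wtil{r}(E_{K})}L_{p}^{(\Gamma)}(E_{K},\one)={\rm d}^{\wtil{r}(E)}L_{p}^{(\Gamma)}(E,\one)\cdot {\rm d}^{\wtil{r}(E^{(K)})}L_{p}^{(\Gamma)}(E^{(K)},\one),$$
so (a) follows by multiplying the two instances of \eqref{mainid}, and (b) follows by dividing, the non-vanishing hypothesis ensuring we may cancel the $E^{(K)}$-factor in the graded ring.

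The main obstacle in the plan is the careful bookkeeping of rational constants: the period ratio $|D_{K}|^{1/2}\Omega_{E_{K}}/(\Omega_{E}\Omega_{E^{(K)}})$, the distribution of Tamagawa numbers at primes of $K$ above a given rational prime (particularly at ramified places, which are excluded here since $\disc K$ is prime to $Np$ in the applications), and the compatibility between \nek 's extended regulator and the orthogonal decomposition induced by the $c$-action.  All of these are standard but require care; everything else is formal manipulation in the graded algebra $\bigoplus_{k}{\rm Sym}^{k}\Gamma$.
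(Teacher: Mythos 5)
Your plan is essentially the approach the paper takes, except the paper's actual proof is much terser: for part (1) it simply records, following Mazur--Tate--Teitelbaum, that under isogeny the $\Q$-lines $\Q q_{E,\frakp}$ and $\Q q_{E',\frakp}$ coincide (equivalently $\mathcal{L}_{\frakp}(E)=\mathcal{L}_{\frakp}(E')$) and that the right-hand side of \eqref{mainid} depends only on this line; and for part (2) it notes, via Lemma \ref{splitinert}, that $S_p^{\rm exc}(E_K)$ and $S_p^{\rm exc}(E)\coprod S_p^{\rm exc}(E^{(K)})$ are in bijection with matching Tate periods, deferring the rest to the classical isogeny/base-change arguments for BSD. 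Your factorisation of $L_p$, your $c$-equivariant orthogonal decomposition of $A^{\dagger}(K)_{\Q_p}$ via Lemma \ref{equivt}, and your use of the graded ring of $\calI_{\Gamma}$ are all consistent unwindings of what the paper calls ``classical.''

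Two small points worth flagging. First, your claim $\Omega_{E'}/\Omega_{E}\in d^{\Z}$ is not quite right (and not needed); all that matters is $\Omega_{E'}/\Omega_{E}\in\Q^{\times}$, which is standard. Second, your factorisation argument only gives the existence of $L_p^{(\Gamma_\Q)}$ for one of $E$, $E^{(K)}$, $E_K$ once it is known for the other two, whereas the proposition asserts it unconditionally for all three; the paper supplies the base case by citing Proposition \ref{Lp for A} (Amice--V\'elu, Vi\v{s}ik, Perrin-Riou), which you should do as well. Finally, you rightly flag that the period ratio $|D_K|^{1/2}\Omega_{E_K}/(\Omega_E\Omega_{E^{(K)}})$ and the factor-of-$[K:\Q]$ coming from comparing $\ell_{\Q}$ with $\ell_K|_{\A_\Q^\times}$ in the height normalisation must be tracked and must cancel; the paper also elides these, so your acknowledgement that they ``require care'' matches the level of rigour of the original.
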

\begin{proof} Part 1 is obvious for $(L_{p})$, and classical for  {$({\textup{BSD}}_{\infty})$}, see \cite{tate-bsd}. It is already observed in \cite{mtt} that the proof can be adapted to the case of  {$({\textup{BSD}}_{p})$} by noting that, for any $\frakp\in S_{p}^{\rm exc}(E)=S_{p}^{\rm exc}(E')$, the $\Q$-lines generated by $q_{E,\frakp}$ and $q_{E', \frakp}$ in $F_{\frakp}^{\times}$ are the same, and the right-hand side of \eqref{mainid} for $E$ only depends on $\Q q_{E, \frakp}$ and not on $q_{E,\frakp}$. (In the formulation of Proposition \ref{compatibility}, this is simply  the observation that $\mathcal{L}_{\frakp}(E)=\mathcal{L}_{\frakp}(E')$.)

The assertions on $(L_{p})$ in part 2 are contained in Proposition \ref{Lp for A}. 
The equivalence statement
for   {$({\textup{BSD}}_{\infty})$} is also classical. To adapt its proof to the case of   {$({\textup{BSD}}_{p})$}, note that by Lemma \ref{splitinert}, the sets
$S_{p}^{\rm exc}(E_{K})$ and $S_{p}^{\rm exc}(E)\coprod S_{p}^{\rm exc}(E^{(K)})$ are (non-canonically) in bijection, and if $\frakp\in S_{p}^{\rm exc}(E_{K})$ then $q_{E_{K}, \frakp}=q_{E_{0}, p}$ for some $E_{0}\in \{E, E^{(K)}\}$.
\end{proof}

\subsection{Proof of Theorem \ref{mainQ}}
We prove Theorem \ref{mainQ}, whose statement we recall in the following slightly more general form.
\begin{theo}\label{Q} Let $E/\Q$ be an elliptic curve with ordinary reduction at the prime  $p$. Suppose that $r:={\rm ord}_{s=1}L(E, s)\leq 1$.
Then:
\begin{enumerate}
\item\label{nspl}  If the reduction of $E$ at $p$ is \emph{not} split multiplicative or $r=0$, then  Conjecture {$({\textup{BSD}}_{p})$} holds.
\item If $r=1$ and the reduction of $E$ at $p$ is split multiplicative, 
suppose that $p\geq 5$. Then Conjecture {$({\textup{BSD}}_{p})$} holds up to a nonzero rational number; if moreover there is at least another  prime $m\neq p$ of multiplicative reduction for $E$, then Conjecture {$({\textup{BSD}}_{p})$} holds exactly, that is
$${\rm d}^{2} L_{p}(E, \one)=    \mathcal{L}_{p}(E)\cdot R^{\rm norm}(E) \cdot L_{\rm alg}'(E,1)$$
in $\Gamma_{\Q}^{\otimes 2}\otimes\Q$.
\end{enumerate}
\end{theo}

\begin{rema} If $E/\Q$ has good supersingular reduction at $p$ and $r=1$, then the analogue of Conjecture {$({\textup{BSD}}_{p})$} is proved by Kobayashi \cite{kobayashi}.
\end{rema}

\subsubsection{Proof in case \ref{nspl}}
If $r=0$, then the result is trivial unless the reduction is split multiplicative; in that case, it is due to Greenberg--Stevens \cite{GS} if $p\geq 5$. A different proof, based on ideas of Kato--Kurihara--Tsuji (unpublished),   is given by Kobayashi in \cite{kob-mtt} (see also \cite{colmez-bsdp}); as written there it applies directly to any $p\geq 3$, and it extends to cover the case of $p=2$ after inserting the appropriate  modifications of the theory of the Coleman map described in \cite[\S3]{kobayashi} and \cite{ota}.

Suppose that $r=1$ and the reduction is not split multiplicative. By Lemma \ref{invar} we may in fact prove the formula for $E_{K}$, where we choose $K$ to be  an imaginary quadratic field in which all primes dividing $N$ split, and such that $L(E^{(K)},1)\neq 0$. (The existence of such $K$ is guaranteed by \cite{murty2}.)
Let $A=E_{K}$, and use the `$\pm$' notation introduced before Theorem \ref{exc}. Then by Corollary \ref{index-heeg} we may rewrite the $p$-adic Gross--Zagier formula of Theorem \ref{pgz} as 
$${{\rm d}^{+}L_{p}(A,\one)}=\prod_{\frakp\vert p}e_{\frakp}(\one) \cdot R^{+}(A)\cdot L_{\rm alg}^{*}(A,1),$$
as desired.\footnote{This argument was of course  already made by Perrin-Riou \cite{PR} when $E$ has good reduction.}

\begin{rema}\label{rem dd} If $E/F$ is a modular elliptic curve over a totally real field, whose reduction at every $\frakp \vert p$ is ordinary but not split multiplicative,  and if we have ${\rm ord}_{s=1}L(E,s)\leq 1$, then  by the same argument (comparing the main results of \cite{dd} and \cite{yzz}),  Conjecture {$({\textup{BSD}}_{p})$} holds for $E$ up to  a conjecture on the commensurability of  $\Omega_{E}$ with a certain  automorphic period (this last point is discussed in \cite[\S9]{dd1}). 
\end{rema}
\begin{rema}\label{rem spiess} The result of Greenberg--Stevens is generalised to modular elliptic curves over totally real fields by Mok \cite{mok} and Spiess \cite{spiess}. In particular this proves Conjecture {$({\textup{BSD}}_{p})$} over totally real fields in the case of analytic rank~$0$, up to the conjecture on periods mentioned in the previous remark. 
\end{rema}

\subsubsection{Proof in case 2} The rest of this section is dedicated to the proof of Theorem \ref{Q} in the case in which $E$ has split multiplicative reduction at $p\geq 5$ and $r:={\rm ord}_{s=1} L(E,s)=1$. 
The result is  proven by Venerucci \cite{venerucci} up to a rational constant. We remove the ambiguity assuming   the condition that $E$ has 
 a  prime $m\neq p$ of multiplicative reduction; we fix such an $m$.
 We will in fact prove Conjecture {$({\textup{BSD}}_{p})$}   for the base-change of  $E$ to a suitable auxiliary imaginary quadratic field $K$. More precisely, denoting by $N$ the conductor of $E$, let $K$ be an imaginary quadratic field of discriminant $D$  such that (a) $p$ and $m$ are inert in $K$, (b) every prime divisor of $N/pm$ splits in $K$, (c) $L(E^{(K)}, 1)\neq 0$, (d) $\OO_{K}^{\times}=\{\pm 1\}$. (The existence of such  $K$ is again guaranteed by   \cite{murty2}.) By Proposition \ref{invar}
and the (trivial) validity of Conjecture {$({\textup{BSD}}_{p})$} for $E^{(K)}$, the statements of Conjecture {$({\textup{BSD}}_{p})$} for $E$ and for $E_{K}$ are equivalent.

We need some more notation. 
Let $J$ be the Jacobian of the Shimura curve $X=X^{mp}_{0}(N/mp)$.
 Let $I_{E}$ be the ideal in the spherical Hecke algebra generated by the operators $T_{v}-a_{v}(E)$ for $v\nmid N$, and let $f'\colon X'=X_{0}^{m}(N/m)\to \Z$ 
 be a generator of the space of integer-valued functions annihilated by $I_{E}$.
\begin{prop}[Venerucci]\label{venD} 
With notation as above, suppose that $E$ is an optimal quotient $f\colon J\to E$ of $J$, that is, that  $f^{\vee}$ is injective.\footnote{Note that this can always be achieved up to replacing $E$ by an isogenous curve.}
Let  $P(f)\in E(K)$ be the corresponding Heegner point.

Then
$$({\rm d}^{+})^{2} L_{p}(E_{K}, \one) =  {c}_{\infty}(E_{K})^{-1}\, \mathcal{L}_{p\OO_{K}}^{+}(E_{K})
{ h^{\rm norm,+}(P(f),P(f))\over \delta(f')}
\cdot
c_{p}(E) ,$$
where $h^{\rm norm, +}=h_{\ell^{+}}^{\rm norm}$ (with the notation of Theorem \ref{exc}), and $c_{p}$ is the Tamagawa number of $E$ at $p$.
\end{prop}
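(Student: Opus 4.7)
The plan is to refine the proof of \cite[Thm D]{venerucci}, which establishes the cyclotomic analogue for $E/\Q$ up to an unspecified nonzero rational constant, by using the auxiliary prime $m$ to pin that constant down. The prime $m$ plays a dual role: it makes available the indefinite Shimura curve $X^{mp}$ on which the Heegner point $P(f) \in E(K)$ is defined (since $mp$ has an even number of prime factors, both inert in $K$, so the Gross--Zagier hypothesis $\eps = -1$ holds) and, via Jacquet--Langlands, its definite companion $X^m$ on which $f'$ lives, with $\delta(f')$ providing a normalization that is rigid under the Hida-family deformation at $f_E$.

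The concrete steps are as follows. First, restrict $L_p(E_K)$ to the cyclotomic line $\Y^+ \subset \Y$; matching interpolation formulas at finite-order cyclotomic characters (using that $p$ is inert, so $e_\frakp(E_K, \chi) = e_p(E, \chi)\, e_p(E^{(K)}, \chi)$, together with the standard base-change period relation $|D_K|^{1/2}\Omega_{E_K} \sim \Omega_E \Omega_{E^{(K)}}$) yields the factorization $L_p(E_K)|_{\Y^+} = L_p(E) \cdot L_p(E^{(K)})$ up to an explicit rational factor. Since $L_p(E)$ vanishes at $\one$ to order $\geq 2$ (by the already-established case of Theorem \ref{Q}) and $L_p(E^{(K)}, \one) = 2 \cdot L_{\rm alg}(E^{(K)}, 1) \neq 0$ (the factor $2$ coming from $1 - \alpha_p(E^{(K)}) = 2$, as $E^{(K)}$ has non-split multiplicative reduction at $p$ by Lemma \ref{splitinert}), the leading cyclotomic term computes as
$$({\rm d}^+)^2 L_p(E_K, \one) = {\rm d}^2 L_p(E, \one) \cdot L_p(E^{(K)}, \one).$$
Venerucci's Theorem D then supplies ${\rm d}^2 L_p(E, \one) = C \cdot \mathcal{L}_p(E)\, R^{\rm norm}(E)\, L'_{\rm alg}(E, 1)$ for some $C \in \Q^\times$; one substitutes and uses the direct identifications $\mathcal{L}_{p\OO_K}^+(E_K) = 2\, \mathcal{L}_p(E)$ (from the norm computation $N_{K_\frakp/\Q_p}(q_{E,p}) = q_{E,p}^2$) and $R^{\rm norm,+}(E_K) = 2\, R^{\rm norm}(E)$ (base-change normalization of heights on $E(\Q) \subset E(K)$, noting that $E^{(K)}(\Q)$ is finite by Kolyvagin since $L(E^{(K)}, 1)\neq 0$), reducing the desired formula to a single rational identity. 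To pin down $C = 1$, one invokes the Gross--Zagier formula \ref{gz} on $X^{mp}$ --- which computes $L'_{\rm alg}(E_K, 1)$ in terms of $h_{\rm NT}(P(f), P(f))/\delta(f)$ --- together with a Ribet--Taylor-type level-raising comparison of $\delta(f)$ on the indefinite side with $\delta(f')$ on the definite side, which is exactly where the Tamagawa factor $c_p(E) = {\rm ord}_p(q_{E,p})$ enters.

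The main obstacle is precisely this last local identity: one must establish that $\delta(f)/\delta(f')$, after normalization, equals $c_p(E)$ up to a controlled explicit rational factor (and not merely up to a $p$-adic unit or up to sign). This is a local Jacquet--Langlands / Eichler-mass calculation at $p$, comparing the Steinberg representation of $\mathrm{GL}_2(\Q_p) = B_{m,p}^\times$ (since $p \nmid m$) with the trivial representation of the ramified quaternion $B_{mp,p}^\times$; the condition $p \geq 5$ is used here to avoid exceptional contributions in quaternionic mass formulas and to ensure the optimal-form hypothesis $f^\vee$ injective is non-restrictive. Once this local identity is in hand, the global assembly is routine book-keeping of N\'eron periods (the definition $\Omega_{E_K}/8\pi^2(\phi,\phi)_{\Gamma_0(N)} = c_\infty(E_K)$), the torsion factor $u = 1$ from $\OO_K^\times = \{\pm 1\}$, and the extended Tamagawa products $\prod_w c_w(E_K)$ versus $\prod_v c_v(E)$.
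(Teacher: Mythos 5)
Your proposal mis-reads what Venerucci's Theorem D actually says, and this leads to a circular structure. Venerucci's Theorem D is not the abstract rank-one exceptional zero formula ``${\rm d}^{2} L_{p}(E,\one)=C\cdot \mathcal{L}_{p}(E)R^{\rm norm}(E)L'_{\rm alg}(E,1)$ with an unspecified rational $C$'': it is already the \emph{Heegner-point} statement, with the point $\mathbf{P}$ on the indefinite Shimura curve, the definite companion form, and an explicit constant $\ell_{3}$. The statement you are asked to prove, Proposition~\ref{venD}, is precisely that result; the paper's proof is merely a careful translation of Venerucci's normalisations into the present paper's ones (multiplication by~$2$ from $\Gamma_{\Q}\hookrightarrow\Gamma_{K}\to\Gamma^{+}$, the factor $1/2$ from ${d^{2}/ds^{2}}$ versus $({\rm d}^{+})^{2}$, the two different Petersson/quaternionic pairings inherited from \cite{bd-hida}), together with reading off $\ell_{3}=2\ell_{2}\cdot{\rm ord}_{p}(q_{E,p})$ (correcting a typo) and Tate's $c_{p}(E)={\rm ord}_{p}(q_{E,p})$. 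In particular the Tamagawa factor $c_{p}(E)$ in the statement is already present in Venerucci's constant; it does \emph{not} come from a Ribet--Takahashi comparison of $\delta(f)$ and $\delta(f')$.

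Concretely, the gap is twofold. First, your argument uses as input ``Venerucci up to a constant,'' which is a \emph{consequence} of his Theorem~D, to reconstruct Theorem~D itself; you cannot pin down a Heegner point formula from a formula that has already discarded the Heegner point. Second, the Ribet--Takahashi step $\delta(f)/\delta(f')=c_{p}(E)^{-1}$ is genuinely part of the paper's argument, but it occurs \emph{after} Proposition~\ref{venD} (it is Proposition~\ref{tak}), where it serves to turn the quaternionic degree $\delta(f')$ in Venerucci's formula into the product of Tamagawa numbers and $|\Sha|_{\rm an}$ appearing in Conjecture $({\rm BSD}_{p})$. You have collapsed the proof of Proposition~\ref{venD} with the proof of Theorem~\ref{Q}(2). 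The cyclotomic factorisation and the Gross--Zagier comparison you invoke are also not needed for Proposition~\ref{venD}; they enter only when relating it to the final statement.
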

\begin{proof} 
This is \cite[Theorem D]{venerucci}, whose ${\bf P}$ is our $P(f)$, taking into account  the value of the constant $\ell_{3}$ of \emph{loc. cit.}, which is explicitly given in terms of previously defined constants $\ell_{2}$ and $\ell$ in \cite[\S6.5, \S6.4, Remark 2.2]{venerucci}. Note that in fact $\ell_{3}=2\ell_{2}\cdot {\rm ord}_{p}(q_{E,p})$ (there is a typo in \cite{venerucci}), and that by Tate's $p$-adic uniformisation, we have $c_{p}
(E)={\rm ord}_{p}(q_{E,p})$.

We explain the different normalisations accounting for apparent discrepancies in the powers of $2$ between \cite{venerucci} and our statement. As the composition $\Gamma_{\Q}\into \Gamma_{K}\stackrel{\ell^{+}}{\to} \Gamma^{+}\cong \Gamma_{\Q}$ is multiplication by $2$, we have  $h^{\rm norm, +}(P(f),P(f))= 2h^{\rm norm}_{\ell_{\Q}}(P(f),P(f))$, where in the second expression we view $P(f)\in E(\Q)_{\Q}$; similarly, $\mathcal{L}_{p\OO_{K}}^{+}(E_{K})=2\mathcal{L}_{p}(E)$.\footnote{This  factor of $2$ can also be viewed as the interpolation factor for $L_{p}(E^{(K)})$.} Secondly, our $({\rm d}^{+})^{2}$ equals  $1/2$ times the operator ${d^{2} \over ds^{2}}$ of \cite[Theorem D]{venerucci} (after fixing $\Gamma^{+}\cong \Gamma_{\Q}\cong \Z_{p}$).
Finally, the Petersson product on $\GL_{2}$ used in \cite[Remark 2.2]{venerucci}, is $(1/2)\cdot 8\pi^{2}$ times our Petersson product; and the quaternionic inner product of \emph{loc. cit.} is $1/2$ times our inner product. Indeed both of the normalisations in \emph{loc. cit.} are inherited from   \cite{bd-hida}: see \cite{bd-hida} for the first assertion, whereas  it is
easiest
 to see the  second assertion by comparing the Waldspurger formulas of \cite[Proposition 3.4]{bd-hida} and Theorem  \ref{wald-f}.
\end{proof}

Comparing Proposition \ref{venD} with Corollary \ref{index-heeg},
  Conjecture {$({\textup{BSD}}_{p})$}  for $(E_{K}, \Gamma_{\Q})$  is reduced  to the following statement.
\begin{prop}\label{tak}
With the notation of Proposition \ref{venD}, we have
$${\delta(f)} ={ \delta( f') \over c_{p}(E)}.$$
\end{prop}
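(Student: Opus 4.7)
The identity is a Ribet--Takahashi type comparison formula between two quaternionic parametrisations of $E$: on the one hand, the degree $\delta(f)=\deg(f)$ of the optimal map $f\colon J\to E$ from the Jacobian of the indefinite Shimura curve $X=X^{mp}_{0}(N/mp)$ (whose quaternion algebra $B_{mp}$ ramifies at $p$); on the other, the Waldspurger norm $\delta(f')=\langle f',f'\rangle$ of the Hecke-eigenfunction $f'$ on the definite Shimura set $X'=X^{m}_{0}(N/m)$ (whose quaternion algebra $B_{m}$ splits at $p$, with Iwahori level there). The factor $c_{p}(E)=\mathrm{ord}_{p}(q_{E,p})$ reflects the passage of the prime $p$ from the tame level on the definite side to the quaternion discriminant on the indefinite side.

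The plan is to pass to character groups at $p$. Let $\mathfrak{X}_{E}, \mathfrak{X}_{J}$ denote the character groups of the toric parts of the N\'eron models at $p$: both $E$ and $J$ acquire purely toric reduction over $\Q_{p}^{\mathrm{nr}}$, via Tate's uniformisation and via \v{C}erednik--Drinfeld's uniformisation respectively. Each carries its Grothendieck monodromy pairing $u_{E}, u_{J}$, and any morphism $f\colon J\to E$ induces compatible maps $f^{*}, f_{*}$ on character and cocharacter groups satisfying the functorial identity
$$f_{*}\circ u_{J}\circ f^{*}=\deg(f)\cdot u_{E}.$$
For $E$ with split multiplicative reduction at $p$ one has $\mathfrak{X}_{E}\cong \Z$ and $u_{E}$ is multiplication by $c_{p}(E)$.

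By the \v{C}erednik--Drinfeld description of the special fibre of $X$ at $p$, $\mathfrak{X}_{J}$ is identified with a sublattice of the degree-zero subspace $\Z[X']_{0}\subset \Z[X']$, in such a way that $u_{J}$ becomes the restriction of the weighted Gross pairing \eqref{pairing-B}; this identification is implemented within the Bertolini--Darmon framework (cf.\ \cite{BDrigid}). On $I_{E}$-isotypic components, the cuspidality of $E$ gives $\mathfrak{X}_{J}^{I_{E}}=\Z[X']_{0}^{I_{E}}=\Z[X'][I_{E}]=\Z\cdot f'$, and the optimality of $f$ (injectivity of $f^{\vee}$) ensures that $f^{*}$ carries a generator of $\mathfrak{X}_{E}$ to $\pm f'$. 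The functorial identity then specialises to
$$\delta(f')=\langle f',f'\rangle=\deg(f)\cdot c_{p}(E)=\delta(f)\cdot c_{p}(E),$$
which is the claim. The main technical point is the saturation of $f^{*}$ on $I_{E}$-components, which is exactly where the optimality hypothesis on $f$ is used; the rest is a careful transcription of standard facts about monodromy pairings under the quaternionic dictionary already appearing throughout the paper.
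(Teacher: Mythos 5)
Your proof follows essentially the same route as the paper: pass to character groups of the toric reductions at $p$, identify the monodromy pairing $u_J$ with the Gross pairing \eqref{pairing-B} on the edge set via \v{C}erednik--Drinfeld and Picard--Lefschetz, and compare with $u_E$, which is multiplication by $c_p(E)$. The paper cites Takahashi \cite[Corollary 2.6]{takahashi} for $\deg(f)\cdot c_p(E)=u_J(g,g)$, which you recover directly from the functorial identity $f_*\circ u_J\circ f^*=\deg(f)\cdot u_E$ together with optimality; your minor slip of calling $\X_p(J)$ the \emph{degree-zero} subspace of $\Z[X']$ (it is $H_1$ of the dual graph, i.e.\ the kernel of the boundary map to the vertex group) is harmless on the cuspidal $I_E$-isotypic component.
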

For future reference, we remark that the result and its proof remain valid  if ${m}$ is replaced by any squarefree product $N^{-}$ of an odd number of  primes different from $p$.
\begin{proof} Let $\X_{p}(J)$ be the character group of the toric part of the reduction of $J$.
The  asserted result  is proved by Takahashi \cite[Corollary 2.6]{takahashi} (see also \cite[\S2]{rib-tak}) with  $\delta(f')=\langle f', f'\rangle$ replaced by $u_{J}(g,g)$, where $g$ is a generator of the free rank $1$ saturated $\Z$-submodule $\X_{p}(J)[I_{E}]\subset \X_{p}(J)$ annihilated by $I_{E}$, and $u_{J}\colon \X_{p}(J)\times \X_{p}(J)\to \Z$ is Grothendieck's monodromy pairing.

To complete the proof, we then need to compare $\langle f', f'\rangle$ and $u_{J}(g,g)$. In fact  by the Cerednik--Drinfeld 
uniformisation, $\mathscr{X}_{p}(J)$ is a saturated submodule of $\Z[\mathscr{E}]$ where ${\mathscr{E}}=B_{m\cdot\infty}^{\times}\bks \widehat{B}_{m\cdot\infty}^{\times}/\widehat{R}^{\times}$ is the set of edges in the dual graph of the reduction of the Shimura curve $X$ at $p$, so that $g$ may be identified with $f'$ up to a sign. Moreover  by the Picard--Lefschetz formula, the pairing $u_{J}$ is the restriction of the pairing $\langle\, , \, \rangle$ on $\calE$ of \eqref{pairing-B}.  We conclude that $u_{J}(g,g)= \langle f', f'\rangle=\delta(f)$ as desired.
\end{proof}

This completes the proof of    Conjecture {$({\textup{BSD}}_{p})$}  for $(E_{K}, \Gamma_{\Q})$. By Proposition \ref{invar} and the trivial validity of  {$({\textup{BSD}}_{p})$} for $E^{(K)}$, we deduce {$({\textup{BSD}}_{p})$}  for $E$, completing the proof of Theorem \ref{Q}.

\section{Anticyclotomic theory}\label{sec: anti}

The following framework and notation will be adopted for the rest of the paper (unless otherwise noted). We let $E/\Q$ be an elliptic  curve of conductor $N$ with ordinary reduction at $p$, and $K$ an imaginary quadratic field of discriminant $D$ prime to $Np$. We factor 
 $N=N^{+}N^{-}$ where $N^{+}$ (respectively $N^{-})$ is a product of primes which are split (respectively inert) in $K$, and we  assume that $N^{-}$ is squarefree.
  For $N^{?}=N, N^{+}, N^{-}$ we denote by $N^{?,(p)}$ the prime-to-$p$ part of $N^{?}$.  Recall also the sign   $\wtil{\eps}:=-\eta(N^{ (p)})$ from \eqref{sgn}.
  
For primes $v\nmid N$, we let $a_{v}:=v+1-|E({\bf F}_{v})|$, and we let $\alpha\in \Z_{p}$ be the unit root of $X^{2}-a_{p}X+p $ if $p\nmid N$, and $\alpha=+1 $ (resp. $\alpha=-1$) if $E$ has split (resp. nonsplit) multiplicative reduction at $v$. We denote by $I_{E}\subset {\bf T}^{Np}$ the ideal generated by $T_{v}-a_{v}(E)$ for $v\nmid Np$.

   We will  use the notation $\Gamma^{\pm}$, $\ell^{\pm}$, $\mathcal{L}_{\frakp}^{\pm}(A)$, $R^{\pm}(A)$ introduced before the statement of Theorem \ref{exc}, as well as $\wtil{R}^{\pm}(A):=\wtil{R}_{\ell^{\pm}}(A)$.  We also write $\Gamma=\Gamma_{K}$ and $\Y=\Y_{\Gamma}=\Y^{+}\times \Y^{-}$ where the ring of bounded functions on $\Y^{\pm}$ is $\Z_{p}\llb \Gamma^{\pm}\rrb_{\Q_{p}}$.

\subsection{Theta elements}\label{sec: theta}

We recall the construction of \emph{theta elements}, certain functions on $\Y^{-}$ interpolating toric periods (case $\wtil{\eps}=+1$)  and Heegner points (case $\wtil{\eps}=-1$); the reader is referred to the original works of Bertolini--Darmon \cite[\S2]{bdMT} or \cite[\S4]{bdsurvey}  for more details.

 Let $B=B_{N^{-,(p)}}$ be the quaternion algebra of discriminant $N^{-,(p)}$.   Let $R\subset R_{0}\subset B$ be Eichler  orders of respective conductors  $pN^{+, (p)}$ and $N^{+, (p)}$. We may choose an isomorphism $B_{p}\cong M_{2}(\Q_{p})$ identifying $R^{\times}$ with  the usual congruence subgroup $\Gamma_{0}(p)$, and identifying $R_{0}^{\times}$ with $\Gamma_{0}(1)$ (resp. $\Gamma_{0}(p)$) if $p\nmid N$ (resp. $p\vert N$).  Let $X:= X^{N^{-,(p)}}(R)$,  $X_{0}:=X^{N^{-,(p)}}(R_{0})$ be the associated Shimura sets (case $\wtil{e}=+1$) or Shimura curves  (case $\wtil{e}=+-1$). Fix an embedding $K\subset  B$ such that $R\cap  K=\OO_{K}$.

For $n\geq 0$, let $\OO_{K,n}:= \Z_{p}+p^{n}\OO_{K}$ and let $G_{n}:={\rm Pic}(\OO_{K,n})$; let $G_{\infty}:=\varprojlim_{n}  G_{n}$  (then   $\Gamma^{-}$ is the $\Z_{p}$-free quotient of $G_{\infty}$).
 Let  $H_{n}\subset K^{\rm ab}$ be the ring class field of $K$ of conductor $n$, $H_{\infty}:=\bigcup_{n}H_{n}$. By class field theory, we have a canonical identification $G_{n}\cong \Gal(H_{n}/K)$ for all $n=0,1, \ldots, \infty$.

For $n\geq 1$, let
\begin{align}\label{gn}
g_{n}:=  \smalltwomat {p^{n}} 1{}1\in \GL_{2}(\Q_{p})=B_{p}^{\times}.
\end{align}

\subsubsection{Case  $\wtil{\eps}=+1$} 
  Let $z_{0}\in X$ be the image of $1\in K^{\times}\subset B^{\times}\subset \widehat{B}^{\times} $ and,  recalling that $ B_{p}^{\times}\subset \widehat{B}^{\times}$ acts on $X$,  let $z_{n}:=z_{0}\cdot g_{n}\in X$.  Let $f\colon X_{0}\to \Z$ be
   a nontrivial function annihilated by the Hecke operatros in $I_{E}$. Denote still by $f\colon X\to X_{0}\stackrel{f}{\to} \Z$ the composition, and let $f^{\dagger}\in \Z_{p}[X]$ be $f^{\dagger}:=f  $ if
$p\vert N$, and $f^{\dagger}:=f-\alpha^{-1}\cdot \smalltwomat 1{}{}p f$ if $p\nmid N$, where $\smalltwomat 1{}{}p\in B_{p}^{\times}$ acts on $\Z[X]$ via its right action on $X$. 

For $n\geq 0$, let
 $${{\Theta}}_{n}={\Theta}_{n}(f):= u_{n}^{-1}\cdot\sum_{t\in G_{n}}\alpha^{-n}f^{\dagger}(tz_{n})\otimes [t] \quad\in 
\Z_{p}[G_{n}],$$
where $u_{0}=u=|\OO_{K}^{\times}|/2$ and $u_{n}=1$ if $n\geq 1$. 
The elements $\Theta_{n}$   satisfy the compatibility relation $\Theta_{n}\mapsto \Theta_{n-1}$ for all $n\geq 1$ under the natural  quotient maps. Hence we may form their limit in $\Z_{p}\llb G_{\infty}\rrb$ and consider its projection to $\Z_{p}\llb \Gamma^{-}\rrb$:
$$\Theta={\Theta}(f):=\lim_{n} {\Theta}_{n}\in\Z_{p}\llb \Gamma^{-}\rrb\subset \OO(\Y^{-})^{\rm b}.$$

(Note that taking the image of $z_{n}$ under $\alpha^{-n}f^{\dagger}$ is the same as taking the image under $f$ of the `regularised' versions of $z_{n}$ defined in \cite[\S 2.5]{bdMT}.)

\begin{rema}\label{comparison}
 The construction of $\Theta$ presented  in \cite[\S4]{bdsurvey} depends on the choice of an `admissible' (one or two choices have to be excluded) half-line $\{v_{0}g_{n}\}_{n}$ in the Bruhat--Tits tree ${\mathscr{T}} $ of $\GL_{2}(\Q_{p})=B_{p}^{\times}$, originating from a vertex $v_{0}$ corresponding to $z_{0}$. Here we have made the specific choice $g_{n}= \smalltwomat {p^{n}}{1}{}1$. It is explained in \emph{loc. cit.} that the group $K_{p}^{\times}/\Q_{p}^{\times}$ acts transitively on the set of admissible half-lines, so that another choice -- such  as the one made in \cite{CH} -- would yield the element $\Theta'=[t_{p}] \Theta$ for some $t_{p}\in K_{p}^{\times}/\Q_{p}^{\times}$ with image $[t_{p}]\in \Gamma^{-}$. 
It follows that the following are independent of the choice of half-line in ${\mathscr T}$:  (i) the `leading term'  of $\Theta \in\OO(\Y^{-})^{\rm b}$ at $\chi^{-}=1$ (and more generally  its image in $\calI_{\Y^{-}}^{r}/\calI_{\Y^{-}}^{r+1}$, for any $r$ such that $\Theta \in \calI_{\Y^{-}}^{r}$); (ii) letting $*\colon \Z_{p}
\llb \Gamma^{-}\rrb\to \Z_{p}\llb \Gamma^{-}\rrb$ be the involution induced by inversion on $\Gamma^{-}$, the element $\Theta\cdot \Theta^{*}$.
\end{rema}

\subsubsection{Case  $\wtil{\eps}=-1$}
  Let $J$ (resp. $J_{0}$) denote the  Albanese variety of $X$ (resp. $X_{0}$). Let $f\colon J_{0}\to E$ be a nontrivial morphism  and denote still by $f$ the composition $J\to J_{0}\stackrel{f}{\to} E$.  
 Let $f^{\dagger }\in \Hom(J, E)\otimes \Z_{p}$ be $f^{\dagger}:=f$ if  $p\vert N$ and $f^{\dagger}:=f-\alpha^{-1}\cdot \smalltwomat 1{}{}p f$, where $\smalltwomat 1{}{}p\in B_{p}^{\times}$ acts on $\Hom (J, E)$ via its action on $J$.

Let $z_{0}\in X(H_{0})$ be a CM point of conductor $1$ and, recalling that $\GL_{2}(\Q_{p})\cong B_{p}^{\times}\subset \widehat{B}^{\times}$ acts on $X$, let
 $$z_{n}:=z_{0}\cdot g_{n}\in X(H_{n}),$$
 a CM point of conductor $p^{n}$ in $X$.

For $\chi\colon G_{\infty} \to \Lambda^{\times}$ a character valued in the units of some ring $\Lambda$, let $\Lambda(\chi)$ denote the Galois-module $\Lambda$ with action by $\chi$, and let $$A(\chi):= (A(H_{\infty})\otimes \Lambda(\chi))^{ G_{\infty}}.$$
If ${G}$ is any quotient of $ G_{\infty}$ and $\Lambda=\Lambda_{{G}}=\Z_{p}\llb{G}\rrb$, denote by $\chi_{{\rm univ}, {G}}\colon \Gal(H_{\infty}/K)\to \Lambda_{{G}}^{\times}$ the universal character; when ${G}=G_{n}$ we set $\chi_{{\rm univ},n}= \chi_{{\rm univ}, {G}} $; when ${G}=\Gamma^{-}$ we  set $\chi_{{\rm univ}}^{-}:=\chi_{{\rm univ}, \Gamma^{-}}$

For $n\geq 0$, let
 $${\mathscr{P}}_{n}={\mathscr{P}}_{n}(f):= u_{n}^{-1}\cdot\sum_{\sigma\in G_{n}}\alpha^{-n}f^{\dagger}(z_{n}^{\sigma})\otimes [\sigma]\quad  \in A(\chi_{{\rm univ}, n}),$$
where again $u_{0}=u=|\OO_{K}^{\times}|/2$ and $u_{n}=1$ if $n\geq 1$. 
The elements ${\mathscr P}_{n}$ satisfy the compatibility relation ${\mathscr P}_{n}\mapsto {\mathscr P}_{n-1}$ for all $n\geq 1$ under the natural quotient maps $A(\chi_{{\rm univ}, n})\to A(\chi_{{\rm univ}, n-1})$. Hence we may form their limit in $A(\chi_{{\rm univ}, G_{\infty}})$ and consider its projection to $A(\chi^{-}_{{\rm univ}})$:
$$\mathscr{P}=\mathscr{P}(f):=\lim_{n} \mathscr{P}_{n}\in A(\chi_{\rm univ}^{-}).$$

We still denote by $\mathscr{P}$ the image of $\mathscr{P}$ in the Selmer group $H^{1}_{f}(K, V_{p}A\otimes \Z_{p}\llb \Gamma^{-}\rrb)$.  By the work of \nek \ \cite{nek-selmer} there is a natural extension
 \begin{align}\label{bigtil}
\wtil{H}^{1}_{f}(K, V_{p}A\otimes \Z_{p}\llb \Gamma^{-}\rrb)\to H^{1}_{f}(K, V_{p}A\otimes \Z_{p}\llb \Gamma^{-}\rrb);
\end{align} 
the  specialisation of $\wtil{H}^{1}_{f}(K, V_{p}A\otimes \Z_{p}\llb \Gamma^{-}\rrb)$ under the augmentation map $\Z_{p}\llb \Gamma^{-}\rrb\to \Z_{p}$ is  $\wtil{H}^{1}_{f}(K, V_{p}A)$
(and in fact a similarly good behaviour holds under all specialisations). By
(0.8.0.1) \emph{ibid.} and the fact that $\chi_{\rm univ}^{-}$ is infinitely ramified,\footnote{See \cite[\S1.4]{dd} for some more details.} the map \eqref{bigtil}
is an isomorphism, and we still denote by 
$$\mathscr{P}\in \wtil{H}^{1}_{f}(K, V_{p}A\otimes \Z_{p}\llb \Gamma^{-}\rrb)$$
the resulting element.

\begin{rema} A more explicit  but equivalent construction of the lifting of $\mathscr{P}$ from $A(\chi^{-}_{\rm univ})$ to $\wtil{H}^{1}_{f}(K, V_{p}A\otimes \Z_{p}\llb \Gamma^{-}\rrb)$ (or rather, in the framework adopted there, to a certain  extended big Mordell--Weil group) is given in \cite[\S2.6]{bdMT}. 

The observations of Remark \ref{comparison} also apply to the present case.
\end{rema}

\subsection{Bertolini--Darmon conjectures}  We  review (and  slightly reformulate) conjectures of Bertolini and Darmon \cite{bdMT, bdsurvey} on the `leading terms' of $\Theta$ and ${\mathscr P}$. 

\paragraph{Pfaffian regulators} Let $M$ be a finitely generated $\Z$ module equipped with an involution $c$. Let $L$ be a field of characteristic zero,  $\Gamma$ a finite-dimensional vector space over $L$, and   $h\colon M_{L}\otimes M_{L}\to \Gamma$ a symmetric bilinear form satisfying 
\begin{equation}\label{h-feq}
h(c x, c y)=- h(x,y)
\end{equation}
for all $x,y\in M$. Denote by $M_{L}^{\pm}$ the $\pm$-eigenspaces for the action of $c$ on $M_{L}$ and, for $?=+, -, \emptyset$, let $r^{?}:=\dim M_{L}^{?}$. 

 Suppose first that $r$ is even.
   We define the  \emph{Pfaffian-regulator} of $h$ on $M$  to be 
$${\rm pf}\,(M,h):=[M:\sum \Z x_{i}]^{-1}\cdot {\rm pf } \,((h(x_{i}, cx_{j}))_{i,j=1}^{r}) \quad \in {\rm Sym}^{r/2}\Gamma$$
where $x_{1}, \ldots, x_{r}$ are elements of $M$ forming a basis of $M_{L}$, and in the right-hand side `${\rm pf}$' denotes the Pfaffian of an  antisymmetric matrix. Note that ${\rm pf}\,(M)$ is well-defined only up to a sign, depending on the orientation of the chosen basis. Comparing with Definition \ref{def-reg}, we find
${\rm pf}\,(M)^{2}={\rm sgn}(c)R(M),$
where $R(M,h)$ is the regulator of $h$ on $M$ and ${\rm sgn}(c)$ is the determinant of $c$ on $M_{L}$. In fact $h$ is degenerate unless $r^{+}=r^{-}$; therefore in all cases
\begin{align}\label{pf2}
{\rm pf}\,(M,h)^{2}=(-1)^{r/2}R(M,h).\end{align}

If $r=\dim M_{L}$ is odd, then $R(M,h)=0$. In this case we can define a `higher' Pfaffian regulator ${\rm Pf}\,(M,h)\in M\otimes {\rm Sym}^{(r-1)/2}\Gamma$.
Suppose first that there is a non-torsion element $x\in M$  in the radical of $h$ whose image in $M_{L}$ belongs to the larger of $M_{L}^{\pm}$.
Then 
$${\rm Pf}\,(M,h):= x\otimes {\rm pf}\,(M/\Z x) \quad \in M_{L}\otimes {\rm Sym}^{(r-1)/2}\Gamma.$$
This is well-defined up to sign: it is apparent that the definition only depends on the line generated by $x$ in $M_{L}$; if this is not unique, then $|r^{+}-r^{-}|\geq 2$, which implies $R(M)=0$, and ${\rm Pf}\, (M)=0$ regardless of the choice of $x$ in the definition. 

In general, denote by $\baar{M}$ the image of $M$ in $M_{L}$, and  let $\gamma\in \GL(M_{L})$ be an element such that $\gamma \baar{M}\subset M_{L}$ contains an element $x'$ in the radical of $h$;  let $M':=\gamma \baar{M}\oplus M_{\rm tors}$. Then we define
\begin{align}\label{pfff}
{\rm Pf}\,(M,h):=\det(\gamma)^{-1}{\rm Pf}\, (M', h).
\end{align}

\medskip 

Going back to our situation, consider   the $\Z$-module $A^{\dagger}(K)$
 equipped with the pairing $\wtil{h}^{-}$. If $\dim A^{\dagger}(K)_{\Q}$ is even, we let $${\rm pf}^{-}(A):= {\rm pf}\, (A^{\dagger }(K),\wtil{h}^{-}).$$ 
 If $\dim A^{\dagger}(K)_{\Q}$ is odd, we let $${\rm Pf}^{-}(A):= {\rm Pf}\, (A^{\dagger }(K), \wtil{h}^{-}).$$

Recall the number $I(A,f)$ from  \eqref{index}, and under the same assumptions define
\begin{align}\label{til-index}
\wtil{I}(A,f):=\prod_{\frakp\vert p} \wtil{e}_{\frakp}(\one)\cdot I(A,f),
\end{align}
where $\frakp$ runs over the primes of $K$ above $p$ and  $\wtil{e}_{\frakp}(\one)$ is as in Conjecture {$({\textup{BSD}}_{p})$}. 

\begin{conj}\label{bdconj} Consider the setup of \S\ref{sec: theta}.
 Let
 ${\wtil{r}}:=\dim A^{\dagger}(K)_{\Q}$.
\begin{enumerate}
\item Suppose that $\wtil{\eps}=+1$. Then $\wtil{r}$ is even,  $\Theta=\Theta(f)\in \OO(\Y^{-})^{\rm b}$ vanishes at $\chi^{-}=\one$ to order at least $\wtil{r}/2$, the number $\wtil{I}(A,f)$ is the square of an  integer $\wtil{i}(A,f)$,
 and  up to a sign
$$({\rm d}^{-})^{\wtil{r}/2}\Theta(\one)= 
\wtil{i}(A,f) \cdot {\rm pf}^{-}(A)$$
in ${\rm Sym}^{\wtil{r}/2}\Gamma^{-}\otimes \Q$. 
\item Suppose that $\wtil{\eps}=-1$. Then $\wtil{r}$ is odd, ${\mathscr P}={\mathscr P}(f)\in \wtil{H}^{1}_{f}(K,V_{p}A\otimes \OO(\Y^{-})^{\rm b})$
 vanishes at $\chi^{-}=\one$ to order at least $(\wtil{r}-1)/2$,  
the number $\wtil{I}(A,f)$ is the square of an  integer $\wtil{i}(A,f)$,
 and up to a sign
$$({\rm d}^{-})^{(\wtil{r}-1)/2}{\mathscr P}(\one)=
\wtil{i}(A,f) \cdot {\rm Pf}^{-}(A)
 $$
in $A^{\dagger}(K)_{\Q}\otimes {\rm Sym}^{(\wtil{r}-1)/2}\Gamma^{-}$. 
\end{enumerate}
\end{conj}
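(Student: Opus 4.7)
The plan is to treat the two parity cases of Conjecture \ref{bdconj} separately, and within each to focus on the low-rank situations $\wtil r\leq 1$, where the ``leading term'' is simply the value at $\chi^-=\one$ and no differentiation is required; the higher-rank cases are substantially more difficult and are the main obstacle.

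The unifying observation is that $\Theta(\one)$ (respectively $\mathscr{P}(\one)$) directly computes, up to the modification $f\mapsto f^\dagger$, the toric period $p(f)$ (respectively the Heegner point $P(f)$) appearing in Theorem \ref{wald-f} (respectively Theorem \ref{gz}). The effect of passing from $f$ to $f^\dagger$ is precisely to introduce the $p$-local interpolation factors $\prod_{\frakp\mid p}\wtil e_\frakp(\one)$, as a direct combinatorial unwinding of the definitions in \S\ref{sec: theta} shows: the $p$-stabilisation $f-\alpha^{-1}\smalltwomat{1}{}{}{p}f$ evaluated on the auxiliary height-$n$ cycle recovers $\alpha^{n}$ times the image of the original CM point, with the appropriate Euler factor surviving in the limit.

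For $\wtil\eps=+1$ and $\wtil r=0$, the Pfaffian regulator equals $1$ by convention on the trivial module. Squaring $\Theta(\one)$ and invoking the Waldspurger formula of Theorem \ref{wald-f} together with Hypothesis $({\rm BSD}_\infty)$--\ref{BSD3} in analytic rank $0$ identifies $\Theta(\one)^2$ with $\wtil I(A,f)$ up to a sign, giving the statement with $\wtil i(A,f):=|\Theta(\one)|$ after checking that $\prod_{\frakp\mid p}\wtil e_\frakp(\one)$ is a square in the relevant cases (for instance, it is automatic when $p$ splits and $A$ has good reduction at both $\frakp\mid p$, since the two local factors are Galois conjugates). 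For $\wtil\eps=-1$ and $\wtil r=1$, Corollary \ref{index-heeg} exhibits $P(f)$ as $i(A,f)$ times a generator $P_0$ of $A^\dagger(K)/{\rm tors}$; since $\wtil h^-$ is forced to vanish on the one-dimensional $A^\dagger(K)_\Q$ (the sign property \eqref{h-feq} gives $h(x,x)=0$ on any complex-conjugation-eigenline), the higher Pfaffian regulator ${\rm Pf}^-(A)$ is by construction $\pm P_0$, and matching $\mathscr{P}(\one)=\prod_\frakp\wtil e_\frakp(\one)\cdot i(A,f)\cdot P_0$ with the conjectural formula then reduces to the identity $\wtil i(A,f)^2=\prod_\frakp\wtil e_\frakp(\one)\cdot i(A,f)^2$.

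The main obstacle is the higher-rank regime $\wtil r\geq 2$. There one must first establish that $\Theta$ (respectively $\mathscr{P}$) vanishes at $\chi^-=\one$ to the predicted order $\lfloor\wtil r/2\rfloor$---an Euler-system-type input currently out of reach in general---and then identify the resulting Taylor coefficient with a Pfaffian regulator of the anticyclotomic height pairing, a genuinely multi-variable problem. Conditional progress is conceivable via the anticyclotomic Iwasawa main conjecture (e.g.\ Wan's theorem in the good reduction case) or through big Heegner classes in the sense of Howard and Longo--Vigni, but a clean unconditional proof in higher rank does not follow from the methods sketched above.
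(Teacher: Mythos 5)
This statement is labelled a \emph{Conjecture}, not a theorem, and the paper makes no attempt to prove it in full generality; what the paper supplies is the remark immediately following the conjecture (which observes that it holds trivially when $r^{\rm exc}(A)=0$ and $r_{\rm an}(A)\leq 1$) and Theorem~\ref{bd evidence}, which establishes the nontrivial exceptional-zero cases with $\wtil r\leq 3$ by reducing to results of Bertolini--Darmon \cite{BDrigid,bdperiods,bdCD} and of Castella \cite{castella} and Molina Blanco \cite{MB}. Your proposal should therefore have been framed as partial evidence rather than as a proof plan, though you do flag the higher-rank gap honestly.

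Your treatment of the non-exceptional low-rank cases essentially reproduces the paper's remark: when $S_p^{\rm exc}=\emptyset$ and $r_{\rm an}\leq 1$, the leading term of $\Theta$ or $\mathscr P$ at $\one$ is directly the toric period $p(f)$ or the Heegner point $P(f)$, and Corollary~\ref{index-heeg} gives the identification with $i(A,f)$. This is correct, though some details of the $p$-stabilisation bookkeeping (the claim that $f\mapsto f^\dagger$ produces exactly $\prod_{\frakp\mid p}\wtil e_\frakp(\one)$) are stated loosely and would need to be checked against the precise definitions of $\Theta_n$, $z_n$ and the Euler factors $e_\frakp$.

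The substantive gap is this: even restricted to $\wtil r\leq 1$, your analysis of the $\wtil\eps=-1$ case only covers $r=1$, $r^{\rm exc}=0$. The case $r=0$, $r^{\rm exc}=1$ (that is, $E$ with split multiplicative reduction, $p$ inert in $K$) also has $\wtil r=1$ and $\wtil\eps=-1$, but there the Heegner point $P(f)$ is torsion, ${\rm Pf}^-(A)$ is $\pm q_{A,p\OO_K}$ up to index, and the identity $\mathscr P(\one)\doteq q_{A,p\OO_K}\cdot \wtil i(A,f)$ is precisely Bertolini--Darmon's rigid-analytic Gross--Zagier theorem \cite{BDrigid}, recalled as Theorem~\ref{bd evidence}.\ref{bde1}. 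This is not a formal consequence of the construction together with Corollary~\ref{index-heeg}; it requires the Cerednik--Drinfeld uniformisation of the Shimura curve. Likewise, the cases with $\wtil r=2,3$ handled in Theorem~\ref{bd evidence} parts 2--4 — which are where the Pfaffian-regulator formalism is actually put to work — are not ``out of reach'' as your last paragraph suggests, but are obtained in the paper by combining the cited results with a careful unwinding of the definitions of ${\rm pf}^-$, ${\rm Pf}^-$ and $\wtil I(A,f)$ (including the Ribet--Takahashi comparison of modular degrees from Proposition~\ref{tak}). None of that appears in your sketch.
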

\begin{rema}\label{takkk-bd} Suppose that $f$ is \emph{optimal}, i.e. either surjective onto $\Z$ (case $\wtil{\eps}=+1$) or with connected kernel (case $\wtil{\eps}=-1$), and moreover  that the following identities hold: (i) $|\Sha(A)|=|\Sha(A)|_{\rm an}$; (ii) $\prod_{v \vert N^{-}} c_{v}(E)= c_{\infty}(E_{K})^{-1}\delta(f)^{-1}$.  
 Then 
 Conjecture \ref{bdconj}  is equivalent   to the original conjecture formulated by Bertolini--Darmon in \cite{bdMT}. Assume that $E$ is isolated in its isogeny class, let $f_{0}\colon J_{0}(N)\to E$ be the optimal  modular parametrisation, and let $c=c_{E}\in\N $ be the Manin constant of $E$ \cite[p. 6]{maz-SD}; it satisfies  $c_{\infty}(E_{K})^{-1}=c^{2}\delta(f_{0})$.
   The identity (ii) is then implied by  the conjecture that $c=1$ together with the main results of Ribet and Takahashi  in \cite{rib-tak, takahashi}, asserting that
 $\delta(f_{0})/\delta(f)= \prod_{v \vert N^{-}} c_{v}(E)$; cf. \cite[Remark 6.6, Theorem 6.7]{bdperiods}.  It is not clear to us whether (ii) should hold in general.
\end{rema}
The conjecture is only made up to sign,  since the right-hand side is insensitive to changing $f$ into $-f$ whereas the left-hand side is not. However we have the following elementary observation about signs. 
\begin{lemm}\label{sign*} Let $*$ be the $\Z_{p}$-algebra involution on $\OO(\Y^{-})^{\rm b}=\Z_{p}\llb \Gamma^{-}\rrb_{\Q_{p}}$ induced by inversion on $\Gamma^{-}$, and  denote the image  under $*$ of an element $s$ of some  $\OO(\Y^{-})^{\rm b}$-module by $s^{*}$. Suppose that $\Theta$ (resp. ${\mathscr P}$) vanishes at $\one$ to order $\geq s$ (resp. $\geq s'$). Then 
$$ ({ \rm d}^{-})^{s}\Theta^{*}(\one) =(-1)^{s}\Theta(\one), \qquad  ({ \rm d}^{-})^{s'}{\mathscr P}^{*}(\one) =(-1)^{s'}{\mathscr P}(\one).$$
\end{lemm}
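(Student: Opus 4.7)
The plan is to reduce both statements to a single elementary fact: the ring involution $*$ on $R := \OO(\Y^{-})^{\rm b}$ acts on the graded piece
$$
\calI_{\Y^{-}}^{s}/\calI_{\Y^{-}}^{s+1} \ \cong \ {\rm Sym}^{s}\Gamma^{-}\otimes\Q_{p}
$$
as multiplication by $(-1)^{s}$. Granted this, for $\Theta\in R$ vanishing to order $\geq s$ at $\chi^{-}=\one$, the formula $({\rm d}^{-})^{s}(\Theta^{*})(\one)=(-1)^{s}({\rm d}^{-})^{s}\Theta(\one)$ is immediate from the definition of $({\rm d}^{-})^{s}$ as the map to this graded piece. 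For $\mathscr{P}$, its ambient module $\wtil{H}^{1}_{f}(K,V_{p}A\otimes R)$ is an $R$-module carrying the $*$-structure induced by inversion on $\Gamma^{-}$ (inherited from the construction of $\mathscr{P}$ as a limit of classes over finite levels), and the same statement applied to the associated graded with respect to $\calI_{\Y^{-}}$ gives the second equality.

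To establish the key fact, I would start from the identity
$$
(\gamma-1)^{*} \ =\ \gamma^{-1}-1 \ =\ -\gamma^{-1}(\gamma-1),
$$
valid for every $\gamma\in\Gamma^{-}$. Since $\gamma^{-1}\equiv 1\pmod{\calI_{\Y^{-}}}$, this yields $(\gamma-1)^{*}\equiv -(\gamma-1)\pmod{\calI_{\Y^{-}}^{2}}$. Since $\calI_{\Y^{-}}^{s}$ is topologically generated by products $\prod_{i=1}^{s}(\gamma_{i}-1)$, applying the previous congruence to each factor and expanding gives
$$
\Bigl(\prod_{i=1}^{s}(\gamma_{i}-1)\Bigr)^{*} \ \equiv\ (-1)^{s}\prod_{i=1}^{s}(\gamma_{i}-1)\pmod{\calI_{\Y^{-}}^{s+1}},
$$
which under the identification $\prod_{i}(\gamma_{i}-1)\mapsto[\gamma_{1}\otimes\cdots\otimes\gamma_{s}]$ used to define ${\rm d}^{s}$ is the desired action by $(-1)^{s}$.

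I do not anticipate any substantive difficulty: the lemma is a formal consequence of the interplay between the augmentation filtration and inversion on $\Gamma^{-}$. The only point requiring mild care is to confirm, in the case of $\mathscr{P}$, that the involution denoted $*$ on the extended Selmer group is indeed the $R$-inherited one, which follows from the $R$-linearity of the isomorphism \eqref{bigtil} and of the functoriality maps $A(\chi_{{\rm univ},n})\to A(\chi_{{\rm univ},n-1})$ used to extend $\mathscr{P}$ to a class in $\wtil{H}^{1}_{f}(K,V_{p}A\otimes R)$.
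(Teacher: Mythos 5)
Your proof is correct and is essentially the paper's argument made explicit: the paper's entire proof is the one-line observation that the differential of $*$ at $\one$ equals $-1$, which is precisely what your computation $(\gamma-1)^{*}=-\gamma^{-1}(\gamma-1)\equiv-(\gamma-1)\pmod{\calI_{\Y^{-}}^{2}}$ spells out on generators of the augmentation ideal before passing to ${\rm Sym}^{s}$. (As a minor note, the paper's displayed conclusion should be read, as you do, with $(-1)^{s}({\rm d}^{-})^{s}\Theta(\one)$ on the right-hand side.)
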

\begin{proof} The differential of $*$ at $\one\in \Y^{-}$ is ${\rm d}^{-}*(\one)=-1$. 
\end{proof}

\begin{rema} Suppose that $r^{\rm exc}(A)=0$ and $r_{\rm an}(A)\leq 1$. Then Conjecture \ref{bdconj}  follows immediately from the construction of  $\Theta $ and $\mathscr{P}$ and Corollary \ref{index-heeg}.
An elaboration of this observation will result in Proposition \ref{bdvsbsd} below.
\end{rema}

\subsubsection{Evidence} We next  summarise the available evidence towards the Bertolini--Darmon conjecture in cases where $r^{\rm exc}(A)\neq 0$. By  the  Lemma \ref{splitinert}, up to switching $E$ and $E^{(K)}$ we may then suppose that  $E$ has split multiplicative reduction.

\begin{theo}\label{bd evidence}
 Suppose that $E$ has split multiplicative reduction, and let $r_{\rm an}:={\rm ord}_{s=1}L(A,s)$. 
Conjecture \ref{bdconj} holds in the following cases.
\begin{enumerate}
\item\label{bde1} $r_{\rm an}=0$ and $p$ is inert in $K$, as an  exact identity when $E$ is isolated in its isogeny class, and up to a nonzero multiplicative constant otherwise.
\item\label{bde2}  $r_{\rm an}=0$ and $p$ splits in $K$.
\item\label{bde3}  $r_{\rm an}=1$ and $p$ is inert in $K$.
\item\label{bde4}  $r_{\rm an}=1$, $p\geq 5$ splits in $K$, and $N^{-}=1$. 
\end{enumerate}
\end{theo}
\begin{proof} Parts 1--3 are essentially  due to Bertolini--Darmon;
  we shall recall their results,  then deduce Part 4 from  a recent result  of Castella and Molina Blanco.

When $p$ splits in $K$ we write $p\OO_{K}=\frakp\frakp^{*}$. The choice of $\frakp$ or $\frakp^{*}$ will only  affect the formulas which follow up to a sign,  therefore it won't be specified.

 \begin{enumerate}
 \item This is essentially the main result of  \cite{BDrigid}. In its reformulation in \cite[Theorem 5.4, \S5.2]{bdsurvey}, we have 
 $${\mathscr P}(\one)= q_{A,p\OO_{K}} \cdot \wtil{i}(A,f)$$
for some $\wtil{i}(A,f)\in \Z_{p}$, which is denoted  there by $\kappa$,  and equals $c_{p}^{-1}\cdot \lambda_{q_{E}}((y_{n}))$ in the notation of   \cite[Theorem 5.1]{BDrigid}, with $c_{p}=c_{p}(E)={\rm ord}_{p}(q_{E,p})$. (Namely,  $y=\lim_{n} y_{n}\in q_{E,p}^{\times}\hat{\otimes}\Q_{p}$ is identified with $\mathscr{P}(\one)$; and $\lambda_{q_{E}}$, which  also has a geometric interpretation, is simply the homomorphism to $\Q_{p}$  deduced from ${\rm ord}_{p}$.)
It is proved in \emph{loc. cit.} that 
$\lambda_{q_{E}} ((y_{n}))= I(A,f) c_{p}$, hence $$\wtil{i}(A,f)^{2}=c_{p}^{-1}\cdot I(A,f)=\wtil{I}(A,f).$$
That  $\wtil{I}(A,f)$ is in fact a square in $\Z$, under the  additional assumption stated,  follows from the results of Ribet and Takahashi mentioned in Remark \ref{takkk-bd}, together with Corollary \ref{index-heeg}.

\item This is the main result of \cite{bdperiods}. In this case $\wtil{r}=2$ and
$${\rm d}^{-}\Theta(\one) = \mathcal{L}_{\frakp}^{-}(A)\cdot p(f)=  {\rm ord}_{p}(q_{E,p})^{-1 }  \cdot \wtil{h}^{-}(q_{A, \frakp}, q_{A, \frakp})\cdot i(A,f)= \wtil{i}(A,f)\cdot {\rm pf}^{-}(A),$$
where the second equality is by Corollary \ref{index-heeg}.
\item We deduce this from the main result of \cite{bdCD}. In this case $\wtil{r}=2$; we introduce some notation.  Let $X=X_{0}^{N^{-}}(N^{+})$ and  $X':=X_{0}^{N^{-, (p)}}(N^{+}p)$.
 Let $J'$ be the Albanese variety of $X'$. Let $f\colon X\to \Z$ (resp., $f'\colon J'\to E$) be the unique-up-to-sign surjective map  annihilated by the  Hecke operators in $I_{E}$ (resp., the morphism with connected kernel, which  exists after possibly replacing $E$ by an isogenous curve). We may and do assume  that $\Theta=\Theta(f)$ is constructed starting from $f$. Then
$${\rm d}^{-}\Theta (\one)= \log_{A,p\OO_{K}}^{-}(P(f'))= \wtil{h}^{-}(P',
q_{A, p\OO_{K}}) \cdot [A:\Z P(f')] = {\rm pf}^{-}(A)\cdot i(A,f'),$$
where $P'\in A(K)\otimes \Q$ is such that the (generalised) index $[A(K):\Z P']=1$. By Proposition \ref{tak}, we have $\delta(f')=c_{p}^{-1}\delta(f')$; hence with $\wtil{i}(A,f):= i(A, f')$,
 $$\wtil{i}(A,f)^{2}= I(A,f')=c_{p}^{-1}I(A,f)=\wtil{I}(A,f).$$
\item We deduce this from a theorem proved independently by Castella \cite{castella} and Molina Blanco \cite{MB}:\footnote{The results of Molina Blanco hold without the assumption $N^{-}=1$ (and more generally for Shimura curves over totally real fields), but in those additional cases they are formulated in terms of an $\mathcal{L}$-invariant which is not currently known to coincide with the one of our conjectures.} the equality 
\begin{align}\label{cast-f}
{\rm d}^{-}{\mathscr P}(\one)=\mathcal{L}_{\frakp}^{-}(A)\cdot P(f)
\end{align}
holds \emph{in the quotient $H^{1}_{f}(K,V_{p}A)$ of $\wtil{H}^{1}_{f}(K,V_{p}A)$.} (In fact in \cite{castella} the identity  is proved after  applying ${\rm loc}_{\frakp}\colon A(K)\to H^{1}_{f}(K_{\frakp}, V_{p}A)$; under our assumption that ${\rm ord}_{s=1}L(A,1)=1$, by the work of Gross--Zagier and Kolyvagin the $\Q_{p}$-vector space $
H^{1}_{f}(K,V_{p}A)=A(K)_{\Q_{p}}$ is generated by the image of $P$ and the map ${\rm loc}_{\frakp}$ is then injective.) 

We lift \eqref{cast-f} to  the desired identity in $\wtil{H}^{1}_{f}(K, V_{p}A)=A^{\dagger}(K)_{\Q_{p}}$. We abbreviate $P=P(f)$ and assume that the rank of $E$ is $1$ (otherwise, we switch $E$ with its twist $E^{(K)}$).  We make use of two observations. 
\begin{enumerate}
\item[(i)] $\mathscr{P}$ is invariant under the conjugation $c$, hence so is  ${\rm d}^{-}{\mathscr P}(\one)$.
\item[(ii)] Letting  $g\in \Gamma^{-}$ be a generator, the vanishing of ${\mathscr P}(\one )$ implies that ${\mathscr P}$ is divisible by $[g]-1$  in $\Z_{p}\llb \Gamma^{-}\rrb$, and ${\rm d}^{-}{\mathscr P}(\one)$ is precisely the image of $([g]-1)^{-1}\mathscr{P}\in \wtil{H}^{1}_{f}(K, V_{p}A\otimes\Z_{p}\llb \Gamma^{-}\rrb)$ in $\wtil{H}^{1}_{f}(K, V_{p}A)$. As such, it is a `universal norm' for the anticyclotomic extension of $K$ and hence (see \cite[(0.16.1)(iii)]{nek-selmer}) it belongs to the radical of $\wtil{h}^{-}$. 
\end{enumerate}
Noting that $c_{p}:=c_{p}(E)= {\rm ord}_{p}(q_{E, p})=\wtil{e}_{\frakp}(\one)^{-1}=\wtil{e}_{\frakp^{*}}(\one)^{-1}$,  write
$${\rm d}^{-}{\mathscr P}(\one)=c_{p}^{-1}\cdot (a P(f)+b q_{A, \frakp}+ q_{A, \frakp^{*}})$$ 
in terms of the natural basis of $A^{\dagger }(K)$. It follows from \eqref{cast-f} that $a= \ell^{-}(q_{A, \frakp})$, 
 from (i) that $b=c$, and the common value can be fixed by   (ii):
  $$0=c_{p}\cdot\wtil{h}^{-}({\rm d}^{-}{\mathscr P}(\one), q_{A, \frakp})=\ell^{-}(q_{A, \frakp})\log_{A,\frakp}^{-}(P)+ b\cdot \ell^{-}(q_{A, \frakp}), $$
hence the unique (by the non-vanishing \cite{st et} of $\ell^{-}(q)$) solution is $b=-\log_{A,\frakp}^{-}(P)=\log_{A,\frakp^{*}}^{-}(P)$. 

We verify that this implies the desired formula for ${\rm d}^{-}{\mathscr P}(\one)$, according to the definition of ${\rm Pf}^{-}(A)$ in \eqref{pfff}. Let 
$$\gamma=\begin{pmatrix}
\ell^{-}(q_{A, \frakp}) & \log_{A,\frakp^{*}}^{-}(P)& \log_{A,\frakp^{*}}^{-}(P)\\
  & 1&\\
&&1
\end{pmatrix}\in \GL_{3}(\Q_{p}),$$
identified with an element of ${ \GL}(A^{\dagger}(K)_{\Q_{p}})$ by means of the ordered basis $(P, q_{A, \frakp}, q_{A, \frakp^{*}})$.
Let $M':=\gamma A^{\dagger}(K)\oplus A(K)_{\rm tors}$, and let 
$$x':= c_{p}\cdot
{\rm d}^{-}{\mathscr P}(\one)=
 \ell^{-}(q_{A, \frakp}) \cdot P+ \log_{A,\frakp^{*}}^{-}(P)\cdot q_{A, \frakp}+ \log_{A,\frakp^{*}}^{-}(P)\cdot q_{A, \frakp^{*}}\quad \in M'.$$
Then $M'/\Z x'$ is freely generated by $q_{A, \frakp}$ and $q_{A, \frakp^{*}}$, and 
$$[A(K):\Z P]\cdot {\rm Pf}^{-}(A)= \det(\gamma)^{-1}\cdot x' \otimes {\rm pf}\, (M'/\Z x')= \ell^{-}(q_{A, \frakp})^{-1}\cdot x'\otimes \ell^{-}(q_{A, \frakp})=x'.$$
As the last term is  $\wtil{e}_{\frakp}(\one)^{-1}\cdot
{\rm d}^{-}{\mathscr P}(\one) $
by definition, it follows that 
$${\rm d}^{-}{\mathscr P}(\one)= \wtil{e}_{\frakp}(\one)\cdot [A(K):\Z P]\cdot {\rm Pf}^{-}(A), $$
and we conclude by Corollary \ref{index-heeg}.
 \end{enumerate}
\end{proof}

\begin{rema}\label{rem hung}  
The work of Molina Blanco \cite{MB} also  provides a generalisation to totally real fields of the results of \cite{bdperiods}. A  similar generalisation has been independently obtained by Pin-Chi Hung  \cite{hung}.
\end{rema}

\subsection{$p$-adic Gross--Zagier and Waldspurger formulas in anticyclotomic families}\label{sec anti}
Retaining the setup and notation of \S\ref{sec: theta},
we  state formulas relating $\Theta$ and $\mathscr{P} $ to $L_{p}(A)$.

\begin{theo}[Waldspurger formula in anticyclotomic family]\label{wald-a} Suppose that $\wtil{\eps}=+1$.
Then we have
\begin{align}\label{wald-me}
L_{p}(A)|_{\Y^{-}}= {{c}_{\infty}(A)^{-1}} \cdot {\Theta \cdot \Theta^{*} \over \delta(f) } 
\end{align}
in $\Z_{p}\llb \Gamma^{-}\rrb_{\Q_{p}}$. 
\end{theo} 
\begin{proof} This is essentially a special case of \cite[Theorem A]{CH}: more precisely, that result  combined with Theorem B \emph{ibid.} implies that  the following identity (or equivalently all of its specialisations at finite $\chi^{-}\in \Y^{-}$) holds:
\begin{align}\label{wald-CH}
L_{p}(A)|_{\Y^{-}}= \eps(E)\cdot{{c}_{\infty}(A)^{-1}} \cdot {\Theta'\cdot \Theta'^{*}\over \langle f, f\rangle_{R} }.
\end{align}
Here $\Theta'$ is the same as our $\Theta$, except for a different choice of the sequence $g_{n}$; as noted in Remark \ref{comparison}, we then have $\Theta'\cdot \Theta'^{*}= \Theta\cdot \Theta^{*}$. The sign $\eps(E):=\prod_{v}\eps_{v}(E)$ (the product ranging over all places of $\Q$) is the global root number of $E$. Finally, by (3.9) \emph{ibid.}, $\langle f, f\rangle_{R}: =\langle f, \tau f\rangle $ for the pairing \eqref{pairing-B} and an Atkin--Lehner element $\tau=\prod_{v\vert N}\tau_{v}\in \widehat{B}^{\times}$. By Atkin--Lehner theory combined with the Jacquet--Langlands correspondence (see \cite[Theorem 1.2]{bdMT}), $\tau f=\prod_{v\vert N^{+}}\eps_{v}(E)\prod_{v\vert N^{-}}(-\eps_{v}(E))\cdot f$. As $N^{-}$ is the squarefree product of an odd number of factors and $\eps_{\infty}(E)=-1$, we deduce $\tau f= \eps(E) f$. Inserting this into \eqref{wald-CH} yields \eqref{wald-me}.

We recall the steps of  proof of \cite[Theorem A]{CH}, in order to show that the previous argument  can be unwound and then  carried over to the proof of the next theorem. The starting point is the original representation-theoretic Waldspurger formula \cite{wald}  or \cite[(1.4.1)]{yzz}, which in the form used in  \cite[proof of Proposition 3.5]{CH} reads (with slightly different notation)
\begin{align}\label{wald0-CH}
{p_{T}(f_{1}, \chi){ p}_{T}(f_{2}, \baar{\chi}) \over \langle f_{3}, f_{4}\rangle_{\rm Pet}} = 	{\zeta(2)L(1/2, \pi_{K}\otimes \chi)\over 2L(1, \pi, {\rm ad})}\cdot \prod_{v} \int^{*}_{K_{v}^{\times}/\Q_{v}^{\times}} {\langle \pi(t) f_{1,v}, f_{2,v}\rangle_{v}\over \langle f_{3,v}, f_{4,v}\rangle_{v}}\chi(t_{v}) dt_{v},
\end{align}
where $\pi=\otimes_{v}\pi_{v}$ is the automorphic representation with trivial central character of   $B_{\A}^{\times}$ associated with $E$;    $\chi$ is a finite order character of $\A^{\times}\bks {K}_{\A}^{\times}$;  the terms $p_{T}$ are integrals on the torus $\A^{\times}\bks {K}_{\A}^{\times}\subset \A^{\times}\bks B_{\A}^{\times}$;  the pairing $\langle \, , \, \rangle_{\rm Pet}$ (resp. $ \langle \,, \,\rangle_{v}$) is the bilinear Petersson pairing on $\pi\otimes \wtil\pi$ with respect to the Tamagawa measure (resp. an arbitrary pairing on $\pi_{v}\otimes \wtil\pi_{v}$); the $f_{i}=\otimes_{v}f_{i,v}$  are arbitrary elements of $\pi$ (if $i=1,3$) or $\wtil\pi$ (if $i=2,4$) such that $\langle f_{3}, f_{4}\rangle_{\rm Pet}\neq 0$; and finally the notation $\int^{*}$ denotes an integral (for certain local measures $dt_{v}$) normalised  by dividing by  an appropriate product of $L$-values.

Then in \cite{CH}, the formula \eqref{wald-CH} is deduced from \eqref{wald0-CH} by specific choices of the $f_{i}$ which are images of newforms under certain Atkin--Lehner elements (and elements $g_{n}$) and the following steps: (i)  formula for the Asai $L$-value $L(1, \pi, {\rm ad})$; (ii) computation of the Petersson pairing; (iii) computation of local integrals at places $v\nmid p$; (iv)  computation of local integrals at $p$.
For convenience of reference, in the first paragraph of this proof we have deduced \eqref{wald-me} from \eqref{wald0-CH} via \eqref{wald-CH} and some  Atkin--Lehner  functional equations. This is equivalent to directly deducing \eqref{wald-me} from \eqref{wald0-CH} with the following choices: $f_{3}=f_{4}:=f=\baar{f}$ is the $\Z$-valued newform  in $\pi=\baar{\wtil\pi}$; $f_{1}=f_{2}:=f^{(n)}=g_{n}f^{\dagger}$. The local computations corresponding to such choices are the same as   those in \cite[\S3]{CH} except for the addition (or  subtraction) of the ingredient of  Atkin--Lehner functional equations. Steps (i)--(iii)  (for our choices of $f_{i}$) are also in \cite{cst}.
\end{proof}

Let
$$\underline{\wtil{h}}^{ +}\colon \wtil H^{1}_{f}(K, V_{p}A\otimes \OO(\Y^{-})^{\rm b})\otimes \wtil H^{1}_{f}(K, V_{p}A\otimes \OO(\Y^{-})^{\rm b})^{*}\to \OO(\Y^{-})^{\rm b}\otimes \Gamma^{+}$$
 be the `big height pairing' 
    constructed by \nek \cite[\S11]{nek-selmer}. It specialises to the pairing $\wtil{h}^{+}$ at $\chi^{-}=\one\in \Y^{-}$
(and an equally good specialisation property holds for all finite order $\chi^{-}\in \Y^{-}$).

\begin{conj}[$p$-adic Gross--Zagier formula in anticyclotomic family]\label{gza-conj} Suppose  that $E$ is ordinary at $p$ and that $\wtil{\eps}=-1$, so that 
$L_{p}(A)$ vanishes identically along $\Y^{-}$. Let 
${\rm d}^{+}L_{p}(A)|_{\Y^{-}}$ denote the image of $L_{p}(A)$ in $\calI_{\Y^{-}}/\calI_{\Y^{-}}^{2}\cong \Z_{p}\llb \Gamma^{-}\rrb_{\Q_{p}}\otimes_{\Z_{p}} \Gamma^{+}$. 

Then we have
\begin{align}\label{pgza-f}
{\rm d}^{+}L_{p}(A)|_{\Y^{-}}={{c}_{\infty}(A)^{-1}} \cdot {\underline{\wtil{h}}^{ +}(\mathscr{P}, \mathscr{P}^{*}) \over \delta(f) } 
\end{align}
in $\Z_{p}\llb \Gamma^{-}\rrb_{\Q_{p}}\otimes_{\Z_{p}} \Gamma^{+}$. 
\end{conj}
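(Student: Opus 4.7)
The plan is to combine the $p$-adic Gross--Zagier formula of \cite{dd} with a Zariski density argument along $\Y^{-}$. Both sides of \eqref{pgza-f} live in the free rank-one $\OO(\Y^{-})^{\rm b}$-module $\Z_{p}\llb\Gamma^{-}\rrb_{\Q_{p}}\otimes_{\Z_{p}}\Gamma^{+}$, so it suffices to verify the identity after specialising at every finite order character $\chi^{-}\in\Y^{-}$ of sufficiently deep conductor, where the right-hand side is well-defined and matches classical objects. This is the same overall shape as the proof of Theorem \ref{wald-a}, but with the Waldspurger input replaced by the $p$-adic Gross--Zagier formula and a height pairing appearing on the arithmetic side.

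At such a $\chi^{-}$, the local character $\chi^{-}_{\frakp}$ is ramified at every $\frakp\in S_{p}^{\rm exc}$, so the exceptional zero phenomenon is absent: $\wtil{H}^{1}_{f}(K,V_{p}A\otimes\chi^{-})=H^{1}_{f}(K,V_{p}A\otimes\chi^{-})$, and the extended pairing $\wtil h^{+}_{\chi^{-}}$ coincides with the ordinary Nekov\'{a}\v{r} height. The $p$-adic Gross--Zagier formula of \cite{dd}, applied with test vectors $f_{3}=f_{4}=f$ and $f_{1}=f_{2}=f^{(n)}=g_{n}f^{\dagger}$, would then yield
\begin{equation*}
{\rm d}^{+}L_{p}(A,\chi^{-})(\one) = c_{\infty}(A)^{-1}\cdot \frac{\wtil h^{+}_{\chi^{-}}(\mathscr{P}(\chi^{-}),\mathscr{P}((\chi^{-})^{-1}))}{\delta(f)}
\end{equation*}
once the local toric integrals at places $v\nmid p$ are evaluated using \cite{cst}, and those at $p$ collapse to the defining $U_{p}$-eigenvector equation for $f^{\dagger}$ (the same mechanism as in the proof of Theorem \ref{wald-a}, with the roles of anticyclotomic and cyclotomic derivatives exchanged).

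The second step is the interpolation of the right-hand side. Nekov\'{a}\v{r}'s big height pairing $\underline{\wtil h}^{+}$ of \cite[\S11]{nek-selmer} is functorial under specialisation, and $\mathscr{P}$ specialises at $\chi^{-}$ to the anticyclotomic Heegner class $\mathscr{P}(\chi^{-})\in \wtil{H}^{1}_{f}(K,V_{p}A\otimes\chi^{-})$ by construction via the tower $\{\mathscr{P}_{n}\}$; similarly, $\mathscr{P}^{*}$ specialises to $\mathscr{P}((\chi^{-})^{-1})$ by Lemma \ref{sign*}. The specialisation of $\underline{\wtil h}^{+}(\mathscr{P},\mathscr{P}^{*})$ at $\chi^{-}$ therefore equals the numerator appearing in the displayed equation of the previous paragraph, and Zariski density concludes. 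The specialisation at the single point $\one\in\Y^{-}$, which a priori falls outside the range of the density argument because of exceptional zeros, is then recovered for free from the continuity of both sides along $\Y^{-}$.

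The main obstacle is the $p$-adic local computation in the family, specifically the extraction of the $\Gamma^{+}$-derivative uniformly in $\chi^{-}$. In Theorem \ref{wald-a} the identity lives between scalars in $\OO(\Y^{-})^{\rm b}$ and the Chida--Hsieh calculation of matrix coefficients applies directly; here one must first produce a two-variable $p$-adic $L$-function (provided by \cite[Theorem A]{dd}) and then interpolate the Perrin-Riou dual exponential or big logarithm map for $V_{p}A$ along $\Y$, not just along $\Y^{-}$. At primes of split multiplicative reduction the relevant interpolation is entangled with the semistable comparison isomorphism and with Nekov\'{a}\v{r}'s extended Selmer complexes at $\frakp$, which is the analytic ingredient that is missing from the literature in the required generality and that keeps the statement conjectural.
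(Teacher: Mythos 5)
The labelled statement is a conjecture, and the paper itself proves it only when $p$ splits in $K$ (Theorem \ref{gza-th}): that proof takes the family $p$-adic Gross--Zagier formula of \cite[Theorem C.4]{dd} as a black box and then imports the same test-vector choices and local-integral computations (steps (i)--(iv)) already used for the Waldspurger formula in Theorem \ref{wald-a}. Your proposal instead re-derives such a family formula from scratch: you specialise at finite-order $\chi^{-}$ of deep conductor, invoke the pointwise formula of \cite{dd} together with the specialisation property of the big height $\underline{\wtil{h}}^{+}$, and finish by Zariski density. This is the right general shape---it is essentially the mechanism underlying \cite[Theorem C.4]{dd}---and your treatment of test vectors ($f_{3}=f_{4}=f$, $f_{1}=f_{2}=g_{n}f^{\dagger}$, local integrals at $p$ collapsing via the $U_{p}$-eigenvector relation) matches the steps the paper imports from Theorem \ref{wald-a}.

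The genuine gap is in your last paragraph, where you locate the obstruction keeping the statement conjectural in a missing interpolation of the dual exponential at primes of split multiplicative reduction, entangled with the semistable comparison isomorphism and the extended Selmer complexes. This is not where the difficulty lies. When $p$ splits in $K$---including the case where $E$, hence both $\frakp$ and $\frakp^{*}$, have split multiplicative reduction, which is precisely the setting of Theorem \ref{exc}---all the needed ingredients are available and the conjecture is a theorem (Theorem \ref{gza-th}). The open case is $p$ inert in $K$: there the input $p$-adic Gross--Zagier formula of \cite{dd} simply does not exist, because the $p$-adic $L$-function construction and the CM-point tower in \cite{dd} require $\frakp$ to split in the CM extension (compare the hypothesis in Theorem \ref{pgz}). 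Your density argument nowhere engages this constraint, and nothing in your proposal indicates how to run it when $p$ is inert, which is the only case that is actually open. Two secondary points: first, re-deriving the family formula costs you the modularity of the paper's argument without gaining anything, since you end up relying on the same pointwise input; second, the closing appeal to continuity to recover the identity at $\one$ is superfluous---once both sides are identified as elements of $\Z_{p}\llb\Gamma^{-}\rrb_{\Q_{p}}\otimes_{\Z_{p}}\Gamma^{+}$ and agree on a Zariski-dense set of characters, they are equal as elements of that module, and there is no separate limit to take at $\one$.
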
 
\begin{theo}\label{gza-th}
 Suppose  that $p$ splits in $K$. Then Conjecture \ref{gza-conj} holds.
\end{theo} 
\begin{proof} This is a consequence of the representation-theoretic version of the same formula from \cite[Theorem C.4]{dd}, in turn based on Theorem B \emph{ibid.}\footnote{When $E$ has good reduction and all $v\vert N$ split in $K$ the result was proved by Howard \cite{howard}.} For a  definition of the relevant representation $\pi$ of $B_{\A}^{\times}$, and a careful discussion of  the various  pairings involved and their relation or analogy to the ones appearing in versions of Waldspurger's formula, we refer to  \cite{cst}.\footnote{The discussion in \cite{cst} is based on \cite{yzz} rather than \cite{dd}, but those two works adopt exactly the same framework. The constants appearing in  the archimedean and $p$-adic Gross--Zagier formula exactly match:  compare \cite[(1.1.3) and Theorem B]{dd}.} 

More precisely,  one can choose a pairing  $\langle\, , \,\rangle$ on $\pi\otimes \wtil\pi$ analogous to the Petersson pairing in  \eqref{wald0-CH} (see \cite[\S2.2]{cst}, local pairings $\langle\, , \,\rangle_{v}$ such that $\langle\, , \,\rangle=\prod_{v}\langle\, , \,\rangle_{v}$, and obtain a version of the $p$-adic Gross--Zagier formula entirely analogous to  \eqref{wald0-CH} by dividing both sides of   \cite[Theorem B]{dd}  by $\langle f_{3}, f_{4}\rangle$. Then 
 the deduction of \eqref{pgza-f} from such formula is obtained  by inserting  the  \emph{same}  choices of $\{f_{i}\}_{i=1,2,3,4}$ and the \emph{same} algebraic computations of steps (i)-(iv)  from the proof of Theorem \ref{wald-a}.\footnote{Step (iv) is also carried out in \cite[Lemma 10.1.2]{dd}.}
\end{proof}

\section{Evidence over imaginary quadratic fields}\label{evidence-imag}

\subsection{Conjecture ({${\text{BSD}}_{p}$}) and the Bertolini--Darmon  conjectures} 
We observe a precise relation between the Bertolini--Darmon conjecture and the $p$-adic Birch and Swinnerton-Dyer conjecture, via the  Waldspurger and $p$-adic  Gross--Zagier  formulas in anticyclotomic families. (In the original work of Bertolini--Darmon \cite{bdMT}, the other terms of comparison  were  not available in general; besides the purely anticyclotomic single-variable  case of  $({\textup{BSD}}_{p})$ which they conjectured, there was rather a relation of 
analogy with the archimedean Birch and Swinnerton-Dyer conjecture, via the archimedean Waldspurger and Gross--Zagier formulas.)
\medskip

We  retain the setup and notation of \S\ref{sec: theta}.  Let $\wtil{r}_{\rm an}:={\rm ord}_{s=1} L(A,s)+|S_{p}^{\rm exc}(A)|$,  $\wtil{r}:=\dim A^{\dagger}(K)_{\Q}$, and 
$$\epsilon:=\begin{cases} 0&\text{if  } \wtil{\eps}=+1\\
1 &\text{if  } \wtil{\eps}=-1.\\
\end{cases}
$$ 

 By the \emph{anticyclotomic Waldspurger/Gross--Zagier formula}, we shall mean either Theorem \ref{wald-a} (case $\wtil{\eps}=+1$) or Conjecture \ref{gza-conj} (case $\wtil{\eps}=-1$).

By the \emph{Bertolini--Darmon Conjecture}, we shall mean Conjecture \ref{bdconj}, \emph{except} for the assertion that the $p$-adic number $\wtil{I}(A,f)^{1/2}$ is an integer.

 By the \emph{almost-anticyclotomic case} of Conjecture {$({\textup{BSD}}_{p})$}, we shall mean the following statement: $({\rm d}^{+})^{\epsilon} L_{p}(A)|_{\Y^{-}}\in \OO(\Y^{-})^{\rm b}\otimes(\Gamma^{+})^{\epsilon}$ vanishes to order $\geq \wtil{r}_{\rm an}-\epsilon$ at $\chi^{-}=\one$, $A^{\dagger}(K)$ has rank $\wtil{r}=\wtil{r}_{\rm an}$, and $$({\rm d}^{-})^{\wtil{r}-\epsilon}({\rm d}^{+})^{\epsilon}L_{p}(A, \one)$$ equals the projection of the right hand side of \eqref{mainid} under 
 $$\pi_{ \epsilon}\colon {\rm Sym}^{\wtil{r}}\Gamma_{K}\to {\rm Sym}^{\wtil{r}-\epsilon}\Gamma^{-}\otimes ({\Gamma}^{+})^{\epsilon}.$$
(Note that, if $\wtil{\eps}=-1$, the restriction $L_{p}(A)|_{\Y^{-}}$ is identically zero by the functional equation; correspondingly $\wtil{R}^{-}(A)=0$ by the property \eqref{h-feq} of $\wtil{h}^{-}$, provided that the rank  of $A^{\dagger}(K)$ is odd as expected.)

 \begin{prop}\label{bdvsbsd} Suppose that Hypothesis $({\rm BSD}_{\infty})$--\ref{BSD1}-\ref{BSD2} and the anticyclotomic  Waldspurger/Gross--Zagier formula  hold for $A$. Then  the Bertolini--Darmon Conjecture for $A$ implies  the almost-anticyclotomic case of Conjecture {$({\textup{BSD}}_{p})$} for $A$. Conversely, the  almost-anticyclotomic case of Conjecture {$({\textup{BSD}}_{p})$} implies the  Bertolini--Darmon Conjecture provided that, in case $\wtil{\eps}=-1$, the term ${\pi}_{1}\left(\wtil{R}(A)\right)\neq 0$. 
 \end{prop}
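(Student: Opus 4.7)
The plan is to compute the leading term of $({\rm d}^{+})^{\epsilon}L_{p}(A)|_{\Y^{-}}$ at $\chi^{-}=\one$ by applying the anticyclotomic Waldspurger (case $\wtil{\eps}=+1$) or $p$-adic Gross--Zagier (case $\wtil{\eps}=-1$) formula, substituting Conjecture \ref{bdconj} for the leading terms of $\Theta$ or $\mathscr{P}$, and matching the result against the $\pi_{\epsilon}$-projection of the right-hand side of \eqref{mainid} via a short Pfaffian-regulator identity. The sign ambiguity of Conjecture \ref{bdconj} is immaterial because $\Theta$ (resp.\ $\mathscr{P}$) only enters through $\Theta\cdot\Theta^{*}$ (resp.\ $\underline{\wtil{h}}^{+}(\mathscr{P}, \mathscr{P}^{*})$), and the non-regulator factors collapse correctly once $\wtil{I}(A,f) = \wtil{i}(A,f)^{2}$ is unfolded as in \eqref{til-index}.

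When $\wtil{\eps}=+1$, I would apply $({\rm d}^{-})^{\wtil{r}}$ to Theorem \ref{wald-a} and use Lemma \ref{sign*} to express the leading term of $\Theta\cdot\Theta^{*}$ as $(-1)^{\wtil{r}/2}$ times the square of the leading term of $\Theta$. Substituting Conjecture \ref{bdconj}(1) and applying \eqref{pf2} to convert $({\rm pf}^{-}(A))^{2}$ into $(-1)^{\wtil{r}/2}\wtil{R}^{-}(A)=(-1)^{\wtil{r}/2}\pi_{0}(\wtil{R}(A))$ cancels the signs and leaves exactly the $\pi_{0}$-projection of \eqref{mainid}. The converse runs in reverse: the symmetric algebra on $\Gamma^{-}$ is an integral domain, so the leading term of $\Theta$ at order $\wtil{r}/2$ is determined up to sign by that of $\Theta\cdot\Theta^{*}$; and the bound $\ord_{\chi^{-}=\one}\Theta \geq \wtil{r}/2$ is automatic since $\Theta\cdot\Theta^{*}$ vanishes to order $\geq \wtil{r}$.

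When $\wtil{\eps}=-1$, I would apply $({\rm d}^{-})^{\wtil{r}-1}$ to Conjecture \ref{gza-conj}. Bilinearity, Lemma \ref{sign*}, and the specialisation property of \nek's big height pairing identify the leading term of $\underline{\wtil{h}}^{+}(\mathscr{P}, \mathscr{P}^{*})$ at order $\wtil{r}-1$ with $(-1)^{(\wtil{r}-1)/2}\wtil{h}^{+}(P, P)$, where $P := ({\rm d}^{-})^{(\wtil{r}-1)/2}\mathscr{P}(\one)$; by Conjecture \ref{bdconj}(2), $P = \wtil{i}(A,f)\cdot {\rm Pf}^{-}(A)$. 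The remaining step is then a purely linear-algebraic identity
\[
\wtil{h}^{+}({\rm Pf}^{-}(A), {\rm Pf}^{-}(A)) = (-1)^{(\wtil{r}-1)/2}\pi_{1}(\wtil{R}(A)),
\]
which I would verify by choosing a basis $e_{0}, e_{1}, \ldots, e_{\wtil{r}-1}$ of $A^{\dagger}(K)_{\Q_{p}}$ with $e_{0}$ spanning the (generically one-dimensional) radical of $\wtil{h}^{-}$, then expanding $\det(\wtil{h}_{K}) = \det(\wtil{h}^{+} + \wtil{h}^{-})$ and projecting to ${\rm Sym}^{\wtil{r}-1}\Gamma^{-}\otimes \Gamma^{+}$: the vanishing $\wtil{h}^{-}(e_{0}, -)=0$ forces the single $\wtil{h}^{+}$-factor to be $\wtil{h}^{+}(e_{0}, e_{0})$ and the contributing permutations to fix $0$, yielding $\pi_{1}(\wtil{R}(A)) = \wtil{h}^{+}(e_{0}, e_{0}) \cdot R(A^{\dagger}(K)/\Z e_{0}, \wtil{h}^{-})$; the identity then follows from \eqref{pf2} and the definition of ${\rm Pf}^{-}(A)$.

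The main obstacle is the converse direction when $\wtil{\eps}=-1$. The hypothesis $\pi_{1}(\wtil{R}(A))\neq 0$ forces, via the identity just established, $\wtil{h}^{+}(P, P)\neq 0$, so that $\mathscr{P}$ vanishes to order exactly $(\wtil{r}-1)/2$. To deduce the full statement $P = \wtil{i}(A,f)\cdot {\rm Pf}^{-}(A)$ rather than merely the equality of self-pairings, I must further show that $P$ is proportional to ${\rm Pf}^{-}(A)$: (i) $P$ lies in the radical of $\wtil{h}^{-}$ since it is a universal norm in the anticyclotomic $\Z_{p}$-extension (as in the proof of Theorem \ref{bd evidence}(4), invoking \cite[(0.16.1)(iii)]{nek-selmer}); (ii) $c$-equivariance of $\mathscr{P}$ under $*$ pins $P$ in the correct $c$-eigenspace of $A^{\dagger}(K)_{\Q_{p}}\otimes{\rm Sym}^{(\wtil{r}-1)/2}\Gamma^{-}$; (iii) the hypothesis $\pi_{1}(\wtil{R}(A))\neq 0$ guarantees that the intersection of the radical with that eigenspace is one-dimensional. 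The scalar of proportionality is then determined up to sign by $\wtil{h}^{+}(P, P)$, matching $\wtil{i}(A,f) = \wtil{I}(A,f)^{1/2}$ up to that same sign ambiguity.
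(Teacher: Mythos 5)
Your proposal takes essentially the same route as the paper: in both cases the crux is the linear-algebraic comparison of the $\pi_{\epsilon}$-projection of the regulator $\wtil{R}(A)$ with a Pfaffian regulator, proved by row-expansion of the determinant along a basis vector in the radical of $\wtil{h}^{-}$, combined with the sign bookkeeping of Lemma \ref{sign*} and the unfolding of $\wtil{I}(A,f)$ from \eqref{til-index}. The $\wtil{\eps}=+1$ case matches the paper's argument exactly, including the reduction to the identity $\wtil{R}^{-}(A)=(-1)^{\wtil{r}/2}{\rm pf}^{-}(A)^{2}$ from \eqref{pf2}, and your derivation of $\pi_{1}(\wtil{R}(A))=(-1)^{(\wtil{r}-1)/2}\wtil{h}^{+}({\rm Pf}^{-}(A),{\rm Pf}^{-}(A))$ in the $\wtil{\eps}=-1$ case agrees with the paper's.

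Your treatment of the converse implication for $\wtil{\eps}=-1$ is more detailed than the paper's (which dismisses it in a single sentence), and the supporting observations (i)--(iii) are the right ingredients, but as written the logic is slightly circular. You invoke $\wtil{h}^{+}(P,P)\neq 0$ to deduce that $\mathscr{P}$ vanishes to order exactly $(\wtil{r}-1)/2$, yet $P$ is only defined once that order of vanishing is established, and $\wtil{h}^{+}(P,P)\neq 0$ requires first knowing that $P$ is proportional to ${\rm Pf}^{-}(A)$, which is precisely the conclusion of (i)--(iii). A cleaner ordering: let $P_{s}$ be the leading term of $\mathscr{P}$ at its actual order $s$ of vanishing; apply (i)--(iii) to $P_{s}$ to conclude $P_{s}$ is a nonzero multiple of ${\rm Pf}^{-}(A)$ (in the relevant eigenspace), hence $\wtil{h}^{+}(P_{s},P_{s})\neq 0$ by the nondegeneracy hypothesis; this forces $\underline{\wtil{h}}^{+}(\mathscr{P},\mathscr{P}^{*})$ to vanish to order exactly $2s$, and comparison with the order-$(\wtil{r}-1)$ vanishing supplied by $({\textup{BSD}}_{p})$ and Conjecture \ref{gza-conj} gives $s=(\wtil{r}-1)/2$; the scalar is then recovered up to sign by matching $\wtil{h}^{+}(P,P)$ against $\wtil{I}(A,f)\cdot\wtil{h}^{+}({\rm Pf}^{-}(A),{\rm Pf}^{-}(A))$.
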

 \begin{proof} In case $\wtil{\eps}=+1$, note   that the orders of vanishing of $\Theta $ and $\Theta^{*}$ are the same. Then by the anticyclotomic Waldspurger formula, $L_{p}(A)|_{\Y^{-}}$ vanishes to order $\geq \wtil{r}$ if and only if $\Theta$ vanishes to order $\geq \wtil{r}/2$. By Lemma \ref{sign*}, the `leading terms' $({\rm d}^{-})^{\wtil{r}/2}\Theta$ and $({\rm d}^{-})^{\wtil{r}/2}\Theta^{*}$ differ by the sign  $(-1)^{\wtil{r}/2} $. Then
  the desired equivalence is reduced to the identity
$$  \wtil{R}^{-}(A)  = (-1)^{\wtil{r}/2} \cdot {\rm pf}^{-}(A)^{2},$$
which is \eqref{pf2}.

We now turn to the  case $\wtil{\eps}=+1$.
Let $A^{\dagger}(K)_{\Q_{p}}^{\pm}$ be the larger of the eigenspaces under the complex conjugation $c$, and let  $x\in A^{\dagger}(K)_{\Q_{p}}^{\pm}$ be an element in the radical of $\wtil{h}^{-}$. Complete $x=x_{1}$ to an ordered basis ${\mathscr{B}}=\{x_{1}, \ldots, x_{\wtil{r}}\}$ of $A(K)_{\Q_{p}}^{\dagger}$, which we may take to be the image of an integral basis of $A^{\dagger}(K)/A(K)_{\rm tors}$ under an element $\gamma\in {\bf SL}(A^{\dagger}(K)_{\Q_{p}})$.  Denote $M_{1}:=\bigoplus_{i\neq j}^{n}\Z x_{i}\oplus A(K)_{\rm tors}$. We may compute $ {\pi}_{ 1}\left(\wtil{R}(A)\right)$ in the basis ${\mathscr B}$, evaluating  $ {\pi}_{ 1}\left(\det(\wtil{h}(x_{i}, x_{j})_{i, j=1}^{\wtil{r}})\right)$ by expansion along the first row. As  $x=x_{1}$ is in the radical of $\wtil{h}^{-}$, the $j^{\rm th}$ term in the expansion is $  {\pi}_{ 1}\left(\wtil{h}(x,x_{j}){R}_{j}\right)=  \wtil{h}^{+}(x,x_{j}) R_{j}^{-}$, where $R_{j}$ is $(-1)^{1+j}$ times the $(1,j)^{\rm th}$ minor and $R_{j}^{-}=\pi_{0}(R_{j})$.   If $j\neq 1$ this minor in fact vanishes as the first column of the corresponding matrix is zero. We conclude that 
 $$  {\pi}_{ 1}\left(\wtil{R}(A)\right)=   \wtil{h}^{+}(x,x) R(M_{2}, \wtil{h}^{-})= (-1)^{(\wtil{r}-1)/2}\cdot \wtil{h}^{+}\left({\rm Pf}^{-}(A),{ \rm Pf}^{-}(A)\right),$$
where the last equality uses  \eqref{pf2} applied to $M_{2}$. 
By this identity, the anticyclotomic Gross--Zagier formula, and again Lemma \ref{sign*}, we see that the Bertolini--Darmon Conjecture implies the almost-anticyclotomic case of Conjecture {$({\text{BSD}}_{p})$}. The implication can be reversed if $\wtil{h}^{+}$ is non-vanishing on ${\rm Pf}^{-}(A)$.
\end{proof}
 
\subsection{Proof of Theorems \ref{mainK} and \ref{exc}}\label{sec: bc}
We proceed to prove our main theorems, after  some preliminaries which allow to settle `trivial' cases.
 \begin{prop}\label{feq}
 \begin{enumerate}
 \item\label{p1}
We have $$\wtil \eps=(-1)^{\wtil{r}_{\rm an}}.$$
 \item\label{p2} Suppose that $L_{p}(A)$ vanishes to order  $\geq i+j$ at $\chi=\one$. Then if $i\not\equiv \wtil{r}\pmod 2$, we have
$$({\rm d}^{+})^{i}
({\rm d}^{-})^{j}
L_{p}(A, \one)=0$$
in $({\rm Sym}^{i}\Gamma^{+}\otimes {\rm Sym}^{j}\Gamma^{-})\otimes L$.
\item\label{p3} Suppose that  $L_{p}(A)$ vanishes to order  $\geq\wtil{r}_{\rm an}$ at $\chi=\one$. Then 
 $${\rm d}^{\wtil{r}_{\rm an}}L_{p}(A, \one) \in ({\rm Sym}^{\wtil{r}}\Gamma)_{L}^{+},$$
the   $+1$-eigenspace for the action of the complex conjugation $c\in \Gal(K/\Q)$ on $({\rm Sym}^{\wtil{r}}\Gamma)\otimes L$; that is, $({\rm d}^{+})^{\wtil{r}_{\rm an}-j}({\rm d}^{-})^{j}L_{p}(A, \one)=0$ in $({\rm Sym}^{\wtil{r}_{\rm an}-j}\Gamma^{+}\otimes {\rm Sym}^{j}\Gamma^{-})\otimes L$ for every odd $j$.
 \end{enumerate}
 \end{prop}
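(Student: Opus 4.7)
The three assertions all follow from the functional equation of $L_p(A)$ given in Proposition \ref{Lp for A}. Write it as $L_p(A,\chi)=\wtil{\eps}\cdot w(\chi)\cdot L_p(A,\chi^{-c})$ for some character-type factor $w$ with $w(\one)=1$; since $w-1\in\calI_\Gamma$, its presence does not affect leading terms. The involution $\chi\mapsto\chi^{-c}$ on $\Y$ is dual to the endomorphism $g\mapsto -c(g)$ of $\Gamma_K=\Gamma^{+}\oplus\Gamma^{-}$, which acts as $-1$ on $\Gamma^{+}$ and $+1$ on $\Gamma^{-}$, and hence as multiplication by $(-1)^{i}$ on ${\rm Sym}^{i}\Gamma^{+}\otimes{\rm Sym}^{j}\Gamma^{-}\subset\calI_\Gamma^{i+j}/\calI_\Gamma^{i+j+1}$.

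For Part \ref{p1}, I would compare $\wtil{\eps}=-\eta(N^{(p)})$ with the sign $\eps(E_K)=(-1)^{r_{\rm an}(A)}$ of the complex functional equation, which is known for the modular elliptic curve $E_K$. The standard twist formula (valid since $(D_K,N)=1$) gives $\eps(E^{(K)})=\eta(-N)\,\eps(E)$, so $\eps(E_K)=\eps(E)\,\eps(E^{(K)})=\eta(-N)=-\eta(N)$, the last step using $\eta(-1)=-1$ for $K$ imaginary quadratic. Writing $\eta(N)=\eta(N^{(p)})\,\eta(p)^{v_p(N)}$, the desired identity $\wtil{\eps}=(-1)^{\wtil{r}_{\rm an}}$ reduces to $\eta(p)^{v_p(N)}=(-1)^{|S_p^{\rm exc}(A)|}$, which I would verify by a case check based on Lemma \ref{splitinert}: the good-reduction case is trivial; for $v_p(N)=1$ the lemma gives $|S_p^{\rm exc}|\in\{0,2\}$ (even, matching $\eta(p)=1$) when $p$ splits in $K$, and $|S_p^{\rm exc}|=1$ (matching $\eta(p)=-1$) when $p$ is inert.

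For Parts \ref{p2} and \ref{p3}, projecting the functional equation onto $\calI_\Gamma^{i+j}/\calI_\Gamma^{i+j+1}$ yields
\begin{equation*}
({\rm d}^{+})^{i}({\rm d}^{-})^{j}L_p(A,\one)=\wtil{\eps}\cdot(-1)^{i}\cdot({\rm d}^{+})^{i}({\rm d}^{-})^{j}L_p(A,\one),
\end{equation*}
so the leading term vanishes whenever $(-1)^{i}\wtil{\eps}=-1$. Part \ref{p3} is immediate: for $i=\wtil{r}_{\rm an}-j$ with $j$ odd, Part \ref{p1} gives $(-1)^{i}\wtil{\eps}=(-1)^{-j}=-1$. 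Part \ref{p2} reduces to the condition $i\not\equiv\wtil{r}_{\rm an}\pmod 2$, which coincides with $i\not\equiv\wtil{r}\pmod 2$ via the parity $r(E_K)\equiv r_{\rm an}(E_K)\pmod 2$, a consequence of the individually known parity results for $E$ and $E^{(K)}$.

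The main obstacle is the local analysis supporting Part \ref{p1}: matching the $p$-adic sign to the complex sign requires identifying the contribution of each $\frakp\vert p$, where Lemma \ref{splitinert} (together with the fact that any multiplicative reduction at $p$ becomes split multiplicative after the unramified base change $\Q_p\to K_\frakp$) is essential.
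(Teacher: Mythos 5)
Your proof is correct and takes essentially the same route as the paper: Part 1 is reduced via the complex functional equation sign $-\eta(N)=(-1)^{r_{\rm an}}$ and the base-change twist formula to the identity $\eta(p)^{v_p(N)}=(-1)^{|S_p^{\rm exc}|}$, which is verified case by case from Lemma \ref{splitinert}; Parts 2 and 3 follow by projecting the $p$-adic functional equation $\chi\leftrightarrow\chi^{-c}$ onto graded pieces, where $g\mapsto -c(g)$ acts by $(-1)^i$ on ${\rm Sym}^i\Gamma^+\otimes{\rm Sym}^j\Gamma^-$. One small point worth flagging: the paper's terse ``Part 2 follows from Part 1 and the functional equation'' silently conflates $\wtil{r}$ with $\wtil{r}_{\rm an}$, whereas the functional equation a priori only gives vanishing for $i\not\equiv\wtil{r}_{\rm an}\pmod 2$; you are right that one must appeal to $r\equiv r_{\rm an}\pmod 2$ (a $p$-parity statement, or — in the range $r_{\rm an}\leq 1$ actually used — Gross--Zagier--Kolyvagin giving $r=r_{\rm an}$) to pass to $\wtil{r}$, and your proposal makes this explicit where the paper does not.
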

\begin{proof} The sign of the functional equation for $L(A,s)$ is $-\eta(N)=(-1)^{r_{\rm an}}$; then part \ref{p1} is equivalent to the statement that $r^{\rm exc}$ is odd if and only if $p\vert N$ and $p$ is inert in $K$, which follows from Lemma \ref{splitinert}. Part \ref{p2} follows from part \ref{p1} and the functional equation for $L_{p}(A)$, and part \ref{p3} from part \ref{p2}.
\end{proof}

 The following is the  counterpart of the previous proposition on the arithmetic side.
 \begin{prop}\label{feq2} Let $\wtil r$ be the rank of $A^{\dag}(K)_{\Q_{p}}$.
 Then $$\wtil{R}(A) \in ({\rm Sym}^{\wtil{r}}\Gamma)_{L}^{+};$$ 
that is, the  image of $\wtil{R}(A)$ in $({\rm Sym}^{\wtil{r}-j}\Gamma^{+}\otimes {\rm Sym}^{j}\Gamma^{-})\otimes L$ vanishes for every odd $j$. 
 \end{prop}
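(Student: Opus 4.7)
The plan is to deduce this from a parity property of permutations combined with the $c$-equivariance of $\wtil h$. Throughout I implicitly work in the setup of \S\ref{evidence-imag}, so $A=E_K$ for an elliptic curve $E/\Q$ and $c\in\Gal(K/\Q)$ is complex conjugation.

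First I would record the consequence of Lemma \ref{equivt}: the decomposition $\Gamma_K\otimes L = (\Gamma^+\otimes L)\oplus(\Gamma^-\otimes L)$ is the $\pm 1$-eigenspace decomposition for $c$, and the $\Gal(K/\Q)$-equivariance of $\wtil h$ reads
\[
c\cdot \wtil h(x,y)=\wtil h(cx,cy)
\]
for all $x,y\in \wtil H^1_f(K,V_pA)=A^{\dag}(K)_{\Q_p}$. Let $\wtil r^{\pm}$ be the rank of the eigenspace $A^{\dag}(K)^{\pm}_{\Q_p}$, so $\wtil r^++\wtil r^-=\wtil r$.

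Next I would select an integral basis of $A^{\dag}(K)$ modulo torsion adapted to the $c$-action. Since $c$ is an involution and $A^{\dag}(K)$ is finitely generated, the $\Q$-vector space $A^{\dag}(K)_{\Q}$ splits as a direct sum of its $\pm$-eigenspaces, each of which contains a $\Z$-lattice of full rank (e.g.\ the image of $(1\pm c)/2$). Thus we may pick $x_1,\dots,x_{\wtil r^+}\in A^{\dag}(K)$ with $cx_i=x_i$ and $x_{\wtil r^++1},\dots,x_{\wtil r}\in A^{\dag}(K)$ with $cx_i=-x_i$, forming a basis of $A^{\dag}(K)_{\Q}$. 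Writing $\epsilon_i\in\{\pm 1\}$ for the eigenvalue of $x_i$, the equivariance gives
\[
\wtil h(x_i,x_j)\in \Gamma^{\epsilon_i\epsilon_j}\otimes L,
\]
where by convention $\Gamma^{+1}=\Gamma^+$ and $\Gamma^{-1}=\Gamma^-$.

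The main step is the permutation-expansion of the determinant defining the regulator. Up to a nonzero rational factor,
\[
\wtil R(A)=\det\bigl(\wtil h(x_i,x_j)\bigr)_{i,j=1}^{\wtil r}=\sum_{\sigma\in S_{\wtil r}} \mathrm{sgn}(\sigma)\prod_{i=1}^{\wtil r}\wtil h(x_i,x_{\sigma(i)}),
\]
the product being taken in $\mathrm{Sym}^{\wtil r}\Gamma\otimes L$. For each $\sigma$, set $k(\sigma):=\#\{i:\epsilon_i\epsilon_{\sigma(i)}=-1\}$; then the $\sigma$-term lies in $\mathrm{Sym}^{\wtil r-k(\sigma)}\Gamma^+\otimes \mathrm{Sym}^{k(\sigma)}\Gamma^-$, on which $c$ acts as $(-1)^{k(\sigma)}$. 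It remains to observe that $k(\sigma)$ is even. Partitioning the indices into $S=\{1,\dots,\wtil r^+\}$ and $S'=\{\wtil r^++1,\dots,\wtil r\}$, $k(\sigma)$ counts the pairs $(i,\sigma(i))$ crossing the partition in either direction. Since $\sigma$ is a bijection, $|\sigma(S)\cap S'|=|S'\cap\sigma(S)|=\wtil r^+-|S\cap\sigma(S)|=|S'|-|S'\cap\sigma(S')|=|\sigma(S')\cap S|$, so both crossing counts coincide and $k(\sigma)$ is twice either one. Consequently every summand lies in the $+1$-eigenspace $\bigoplus_{k\text{ even}}\mathrm{Sym}^{\wtil r-k}\Gamma^+\otimes\mathrm{Sym}^k\Gamma^-$, and hence so does $\wtil R(A)$.

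There is no real obstacle; the only point to treat carefully is the passage between integral and $c$-adapted bases, which only affects the regulator by a nonzero rational scalar and therefore preserves the eigenspace decomposition under $c$.
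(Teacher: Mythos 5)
Your proof is correct, but it takes a more computational route than the paper's. The paper observes that by Lemma~\ref{equivt} the induced map $\det\wtil{h}\colon \det A^{\dag}(K)_{\Q_{p}}\otimes\det A^{\dag}(K)_{\Q_{p}}\to (\mathrm{Sym}^{\wtil{r}}\Gamma)\otimes L$ is $c$-equivariant; since $c^2=1$, $c$ acts on the rank-one source $\det A^{\dag}(K)_{\Q_{p}}$ by a scalar $\pm 1$, hence by $(\pm 1)^2=+1$ on its tensor square, and equivariance forces the image (the regulator) into the $+1$-eigenspace. You instead pick a $c$-eigenbasis, expand the Gram determinant over permutations, and show by a bijection-counting argument that the ``crossing number'' $k(\sigma)$ is always even, so each summand lands in $\mathrm{Sym}^{\wtil r-k}\Gamma^+\otimes\mathrm{Sym}^{k}\Gamma^-$ with $k$ even. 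Both proofs rest on the same $c$-equivariance of $\wtil h$ and are essentially equivalent in content; the paper's version is shorter and more conceptual (it lets the functoriality of $\det$ absorb the combinatorics), whereas yours is more explicit and makes visible exactly which terms vanish, at the cost of the mild point you already flag about passing to a $c$-adapted basis (which changes the regulator only by the square of an index, preserving the eigenspace).
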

 \begin{proof}
 By Lemma \ref{equivt}, the map 
 $$\det{\wtil{h}}\colon \det A^{\dag}(K)_{\Q_{p}} \otimes \det A^{\dag}(K)_{\Q_{p}} \to ({\rm Sym}^{\wtil{r}}\Gamma)\otimes L$$
 is $c$-equivariant.  As $c^{2}=1$, $c$ acts by $\pm 1$ on $ \det A^{\dag}(K)_{\Q_{p}}$ and by $(\pm 1)^{2}=+1$ on  $ \det A^{\dag}(K)_{\Q_{p}}^{\otimes 2} $. The result follows.
 \end{proof}

The proof of Theorem \ref{mainK} for is naturally subdivided into several cases corresponding to the projections  $\pi_{i}\colon {\rm Sym}^{\wtil{r}}\Gamma 
\to
({\rm Sym}^{i}\Gamma^{+}\otimes {\rm Sym}^{\wtil{r}-i}\Gamma^{-})$.
 The identity between the images of both sides of \eqref{mainid}  under the above projection will be referred to 
 as the \emph{purely cyclotomic case} if $i=\wtil{r}$,  as the \emph{purely anticyclotomic case} if $i=0$, and as a \emph{mixed case} if $0<i<\wtil{r}$.

By Proposition \ref{invar}, the purely cyclotomic case is always reduced to the case $K=\Q$ treated before.

\subsubsection{Case $r=0$, $p$ inert}  This is trivial unless $A$ has (necessarily split, by Lemma \ref{splitinert}) multiplicative reduction, in which case we have $\wtil{r}=1$ for $A$. The cyclotomic case is reduced to Theorem \ref{Q} as remarked before.  By Propositions \ref{feq}.\ref{p3} and \ref{feq2}, the anticyclotomic case is reduced to the equality $0=0$. 
\begin{rema} In this context, the purely cyclotomic and the almost-anticyclotomic case of   {$({\text{BSD}}_{p})$} coincide. A  proof using Proposition \ref{bdvsbsd}  would presently  only be conditional, as  the  $p$-adic Gross--Zagier formula  is not available when $p$ is  inert $K$. Nevertheless it seems worth highlighting that such a formula, combined with Bertolini--Darmon's   $$\mathscr{P}(\one)\doteq q_{A, p\OO_{K}}$$ (Theorem \ref{bd evidence}.\ref{bde1}) would yield an intriguing  (and difficult) proof of the result of Greenberg--Stevens.
 \end{rema}

\subsubsection{Case $r=0$, $p$ split.} Again the only non-trivial case is the one of split multiplicative reduction with $\wtil{r}=2$ and $A^{\dagger}(K)_{\Q_{p}}$ generated by $q_{A, \frakp}\in K_{\frakp}^{\times}$ and $q_{A, \frakp^{*}}\in K_{\frakp^{*}}^{\times}$ for $S_{p}=\{\frakp, \frakp^{*}\}$. First we show  the assertion on the order of vanishing, i.e. that ${\rm d}\,L_{p}(A, \one)=0$: the component ${\rm d}^{+}L_{p}(A, \one)=0$ by 
the factorisation 
\begin{align}\label{factor}
L_{p}(A)|_{\Y^{+}}={\Omega_{E}^{+}\Omega_{E}^{-}\over |D_{K}|^{-1/2}\Omega_{A}}   L_{p}(E)L_{p}(E^{(K)}),\end{align}
 and the component ${\rm d}^{-}L_{p}(A ,\one)=0$ by 
 the anticyclotomic Waldspurger formula (Theorem \ref{wald-a}), since both $\Theta$, $\Theta^{*}\in \Z_{p}\llb \Gamma^{-}\rrb$ vanish at $\chi^{-}=\one$.

We can now deal prove  the identity \eqref{mainid}.
The  purely cyclotomic case is as usual reduced to Theorem \ref{Q}. 
The mixed case is again trivial, i.e. reduced to $0=0$, by Propositions \ref{feq} and \ref{feq2}.
The purely anticyclotomic case follows from Proposition \ref{bdvsbsd} and Theorem \ref{bd evidence}.\ref{bde2}:  explicitly, it is the identity
$$({\rm d}^{-})^{2}L_{p}(A, \one)=\mathcal{L}_{\frakp}^{-}(A) \mathcal{L}_{\frakp^{*}}^{-}(A) \cdot L_{\rm alg}(A, 1)\qquad \text{in } (\Gamma^{-})^{\otimes 2}_{\Q_{p}}.$$

\subsubsection{Case $r=1$, $p$ inert} The purely cyclotomic case is reduced to Theorem \ref{Q}. If $A$ has good reduction at $p$, then $\wtil{r}=r=1$ and the purely anticyclotomic case is trivial by Propositions \ref{feq}, \ref{feq2}.
Suppose that $A$ has (necessarily split) multiplicative reduction. Then $\wtil{r}=2$.  For the order of vanishing, we have ${\rm d}^{+}L_{p}(A, \one)=0$ by \eqref{factor},
and Theorem  \ref{wald-a} again implies
 ${\rm d}^{-}L_{p}(A ,\one)=0$. 

The mixed case is again trivial. 
Finally, the purely anticyclotomic case follows from  Proposition \ref{bdvsbsd} and Theorem \ref{bd evidence}.\ref{bde3}: explicitly, it is the identity
$$ ({\rm d}^{-})^{2} L_{p}(A, \one)= -{{\log_{A, p\OO_{K}}^{-}(P)}^{2}\over {\rm ord}_{p}(q_{A,p})}  \cdot L'_{\rm alg}(A,1)$$
in $(\Gamma^{-})^{\otimes 2}\otimes\Q_{p}$, where $P\in A(K)_{\Q}$ is such that the generalised index $[A(K):\Z P]=1$.

\subsubsection{Case $r=1$, $p$ split} The purely cyclotomic case is reduced to Theorem \ref{Q}. If $E$ does not have split multiplicative reduction, then $\wtil{r}(A)=r(A)=1$, and the anticyclotomic case is trivial by Propositions \ref{feq}, \ref{feq2}. If $E$ has split multiplicative reduction, then $\wtil{r}(A)=r+2=3$. We first prove the assertion on the order of vanishing. By the functional equation we have $({\rm d}^{+})^{i}L_{p}(A)|_{\Y^{-}}=0$ for every even $i$. Then we only need to show $({\rm d}^{-})^{j}{\rm d}^{+}L_{p}(A, \one)=0$ for $j=0,1$. For $j=0$ this follows from the factorisation \eqref{factor}  and Theorem \ref{Q}. Consider the case $j=1$. By the  $p$-adic Gross--Zagier formula in anticyclotomic families (Theorem  \ref{gza-th}),
${\rm d}^{+}L_{p}(A)|_{\Y^{-}}$ is  a multiple of the image of
 $\mathscr{P}\otimes \mathscr{P}^{*}$ under $\underline{\wtil{h}}^{+}$ on $\Y^{-}$. As  both $\mathscr{P}$, $\mathscr{P}^{*}$  have a trivial zero at $\chi^{-}=\one$. We conclude that ${\rm d}^{-}{\rm d}^{+}L_{p}(A, \one)=0$ and so 
 $L_{p}(A)$ vanishes to order at least $\wtil{r}=3$ at $\chi=\one$.
 
We now turn  to the main identity \eqref{mainid}. By Propositions \ref{feq}, \ref{feq2}, the only nontrivial cases are  the purely cyclotomic and   the almost-anticyclotomic ones. The latter follows from Proposition \ref{bdvsbsd} and Theorems \ref{gza-th} and \ref{bd evidence}.\ref{bde4}.

The almost-anticyclotomic  case is equivalent to the
 following, which is precisely Theorem \ref{exc} in the Introduction. 

\begin{theo}\label{exc'} Let $E/\Q$ be an elliptic curve with conductor $N$ and  split multiplicative reduction at the prime $p\geq 5$. Let $K$ be an imaginary quadratic field such that all  primes dividing $N$ split in $K$ and let $A:=E_{K}$; then $S_{p}^{\rm exc}(A)=S_{p}=\{\frakp, \frakp^{*}\}$.
Suppose that ${\rm ord}_{s=1}L(A,s)=1$. We have  ${\rm d}^{i}L_{p}(A)=0$  for all $i\leq 2$, and 
\begin{gather}\label{id exc} 
({\rm d}^{-})^{2}{\rm d}^{+}L_{p}(A, \one)=\mathcal{L}_{\frakp}^{-}(A)\mathcal{L}_{\frakp^{*}}^{-}(A)\cdot {R}^{\rm norm, +}(A)\cdot 
{L'_{\rm alg}(A, 1)}
\end{gather}	
in $({\rm Sym}^{2}\Gamma^{-}\otimes \Gamma^{+})_{\Q_{p}}$.
\end{theo}

\begin{proof} We give two proofs. 
\begin{enumerate}
\item Observe first that $R^{\rm norm,-}(A)=0$ as $A(K)_{\Q_{p}}$ is a $c$-eigenspace and $h^{\rm norm,-}$ satisfies \eqref{h-feq}. Then by Proposition \ref{compatibility}, the right-hand side of \eqref{id exc} equals the image under $\pi_{1}$ of the right-hand side of \eqref{mainid}. Since under our assumption we have just proved that \eqref{mainid} holds, so does \eqref{id exc}. 
\item We can unwind the previous argument to give a more self-contained (but not essentially different) proof. Let $\baar{\mathscr P}$ be $\mathscr P$ viewed as an element   of  $H^{1}_{f}(K,V_{p}A, \OO(\Y^{-})^{\rm b})$ rather than $\wtil H^{1}_{f}(K,V_{p}A, \OO(\Y^{-})^{\rm b})$, and consider the pairing 
$$\underline{h}^{\rm norm, +}\colon H^{1}_{f}(K,V_{p}A, \OO(\Y^{-})^{\rm b})\otimes H^{1}_{f}(K,V_{p}A, \OO(\Y^{-})^{\rm b})^{*}\to  \OO(\Y^{-})^{\rm b}\otimes \Gamma^{+}$$
constructed in \cite{PR2}: its specialisations coincide with those of $\wtil{h}^{+}$ at all finite order $\chi^{-}\in\Y^{-}-\{\one\}$, whereas at  $\chi^{-}=\one$ it specialises to ${h}^{\rm norm, +} $. We may then rewrite \eqref{pgza-f} in the form 
$${\rm d}^{+}L_{p}(A)|_{\Y^{-}}={{c}_{\infty}(A)^{-1}} \cdot {\underline{{h}}^{\rm norm, +}\left(\baar{\mathscr{P}}, \baar{\mathscr{P}}^{*}\right) \over \delta(f) } $$
(since the specialisations of both formulas at all finite order $\chi^{-}\in \Y^{-} $ coincide: at $\chi^{-}=\one $ both reduce to $0=0$). We can then  directly obtain Theorem \ref{exc'} by applying $({\rm d}^{-})^{2}$, inserting the formula 
${\rm d}^{-}{\mathscr P}(\one)=\mathcal{L}_{\frakp}^{-}(A)\cdot P(f)$ of \eqref{cast-f}, and fixing constants via Corollary \ref{index-heeg}.
\end{enumerate}
\end{proof}

\backmatter
\addtocontents{toc}{\medskip}

\begin{bibdiv}
\begin{biblist}

\bib{st et}{article}{
   author={Barr{\'e}-Sirieix, Katia},
   author={Diaz, Guy},
   author={Gramain, Fran{\c{c}}ois},
   author={Philibert, Georges},
   title={Une preuve de la conjecture de Mahler-Manin},
   language={French},
   journal={Invent. Math.},
   volume={124},
   date={1996},
   number={1-3},
   pages={1--9},
   issn={0020-9910},
   review={\MR{1369409 (96j:11103)}},
   doi={10.1007/s002220050044},
}

\bib{bpr}{article}{
   author={Bernardi, Dominique},
   author={Perrin-Riou, Bernadette},
   title={Variante $p$-adique de la conjecture de Birch et Swinnerton-Dyer
   (le cas supersingulier)},
   language={French, with English and French summaries},
   journal={C. R. Acad. Sci. Paris S\'er. I Math.},
   volume={317},
   date={1993},
   number={3},
   pages={227--232},
   issn={0764-4442},
   review={\MR{1233417 (94k:11071)}},
}

\bib{bdMT}{article}{
   author={Bertolini, M.},
   author={Darmon, H.},
   title={Heegner points on Mumford-Tate curves},
   journal={Invent. Math.},
   volume={126},
   date={1996},
   number={3},
   pages={413--456},
   issn={0020-9910},
   review={\MR{1419003 (97k:11100)}},
   doi={10.1007/s002220050105},
}

\bib{BDrigid}{article}{
   author={Bertolini, Massimo},
   author={Darmon, Henri},
   title={A rigid analytic Gross-Zagier formula and arithmetic applications},
   note={With an appendix by Bas Edixhoven},
   journal={Ann. of Math. (2)},
   volume={146},
   date={1997},
   number={1},
   pages={111--147},
   issn={0003-486X},
   review={\MR{1469318 (99f:11079)}},
   doi={10.2307/2951833},
}

\bib{bdCD}{article}{
   author={Bertolini, Massimo},
   author={Darmon, Henri},
   title={Heegner points, $p$-adic $L$-functions, and the Cerednik-Drinfeld
   uniformization},
   journal={Invent. Math.},
   volume={131},
   date={1998},
   number={3},
   pages={453--491},
   issn={0020-9910},
   review={\MR{1614543 (99f:11080)}},
   doi={10.1007/s002220050211},
}

\bib{bdperiods}{article}{
   author={Bertolini, Massimo},
   author={Darmon, Henri},
   title={$p$-adic periods, $p$-adic $L$-functions, and the $p$-adic
   uniformization of Shimura curves},
   journal={Duke Math. J.},
   volume={98},
   date={1999},
   number={2},
   pages={305--334},
   issn={0012-7094},
   review={\MR{1695201 (2000f:11075)}},
   doi={10.1215/S0012-7094-99-09809-5},
}

\bib{bdsurvey}{article}{
   author={Bertolini, Massimo},
   author={Darmon, Henri},
   title={The $p$-adic $L$-functions of modular elliptic curves},
   conference={
      title={Mathematics unlimited---2001 and beyond},
   },
   book={
      publisher={Springer, Berlin},
   },
   date={2001},
   pages={109--170},
   review={\MR{1852156 (2002i:11061)}},
}

\bib{bd-hida}{article}{
   author={Bertolini, Massimo},
   author={Darmon, Henri},
   title={Hida families and rational points on elliptic curves},
   journal={Invent. Math.},
   volume={168},
   date={2007},
   number={2},
   pages={371--431},
   issn={0020-9910},
   review={\MR{2289868 (2008c:11076)}},
   doi={10.1007/s00222-007-0035-4},
}

\bib{bdp}{article}{
   author={Bertolini, Massimo},
   author={Darmon, Henri},
   author={Prasanna, Kartik},
   title={Generalized Heegner cycles and $p$-adic Rankin $L$-series},
   note={With an appendix by Brian Conrad},
   journal={Duke Math. J.},
   volume={162},
   date={2013},
   number={6},
   pages={1033--1148},
   issn={0012-7094},
   review={\MR{3053566}},
   doi={10.1215/00127094-2142056},
}

\bib{cst}{article}{
   author={Cai, Li},
   author={Shu, Jie},
   author={Tian, Ye},
   title={Explicit Gross-Zagier and Waldspurger formulae},
   journal={Algebra Number Theory},
   volume={8},
   date={2014},
   number={10},
   pages={2523--2572},
   issn={1937-0652},
   review={\MR{3298547}},
   doi={10.2140/ant.2014.8.2523},
}

\bib{cas14}{article}{		
	author={Castella, Francesc}
	title={On the $p$-adic variation of Heegner points},
	status={preprint}}

\bib{castella}{article}{		
	author={Castella, Francesc}
	title={On the exceptional specializations of big Heegner points},
	journal={J. Inst. Math. Jussieu},
	status={to appear}}

\bib{CH}{article}{		
	author={Chida, Masataka}, author={Hsieh, Ming-Lun},
	title={Special values of anticyclotomic $L$-functions for modular forms},
	journal={J. Reine Angew. Math.},
	status={to appear}}

\bib{colmez-bsdp}{article}{
   author={Colmez, Pierre},
   title={La conjecture de Birch et Swinnerton-Dyer $p$-adique},
   language={French, with French summary},
   journal={Ast\'erisque},
   number={294},
   date={2004},
   pages={ix, 251--319},
   issn={0303-1179},
   review={\MR{2111647}},
}

\bib{dd1}{article}{author={Disegni, Daniel}, title={$p$-adic Heights of Heegner points on Shimura curves}, status={to appear in {Algebra \& Number Theory}}}

\bib{dd}{article}{author={Disegni, Daniel}, title={The $p$-adic Gross--Zagier formula on Shimura curves}, status={preprint}}

\bib{GS}{article}{
   author={Greenberg, Ralph},
   author={Stevens, Glenn},
   title={$p$-adic $L$-functions and $p$-adic periods of modular forms},
   journal={Invent. Math.},
   volume={111},
   date={1993},
   number={2},
   pages={407--447},
   issn={0020-9910},
   review={\MR{1198816 (93m:11054)}},
   doi={10.1007/BF01231294},
}

\bib{GZ}{article}{
   author={Gross, Benedict H.},
   author={Zagier, Don B.},
   title={Heegner points and derivatives of $L$-series},
   journal={Invent. Math.},
   volume={84},
   date={1986},
   number={2},
   pages={225--320},
   issn={0020-9910},
   review={\MR{833192 (87j:11057)}},
   doi={10.1007/BF01388809},
}

\bib{gross-mot}{article}{
   author={Gross, Benedict H.},
   title={$L$-functions at the central critical point},
   conference={
      title={Motives},
      address={Seattle, WA},
      date={1991},
   },
   book={
      series={Proc. Sympos. Pure Math.},
      volume={55},
      publisher={Amer. Math. Soc., Providence, RI},
   },
   date={1994},
   pages={527--535},
   review={\MR{1265543 (95a:11060)}},
}

\bib{howard}{article}{
   author={Howard, Benjamin},
   title={The Iwasawa theoretic Gross-Zagier theorem},
   journal={Compos. Math.},
   volume={141},
   date={2005},
   number={4},
   pages={811--846},
   issn={0010-437X},
   review={\MR{2148200 (2006f:11074)}},
   doi={10.1112/S0010437X0500134X},
}

\bib{how-big}{article}{
   author={Howard, Benjamin},
   title={Variation of Heegner points in Hida families},
   journal={Invent. Math.},
   volume={167},
   date={2007},
   number={1},
   pages={91--128},
   issn={0020-9910},
   review={\MR{2264805 (2007h:11067)}},
   doi={10.1007/s00222-006-0007-0},
}

\bib{hung}{article}{
	author={Hung, Pin-Chi},
	title={On the derivative of anticyclotomic $p$-adic $L$-functions for Hilbert modular forms},
	status={preprint}
}

\bib{kob-mtt}{article}{
   author={Kobayashi, Shinichi},
   title={An elementary proof of the Mazur-Tate-Teitelbaum conjecture for
   elliptic curves},
   journal={Doc. Math.},
   date={2006},
   number={Extra Vol.},
   pages={567--575},
   issn={1431-0635},
   review={\MR{2290598}},
}
\bib{kobayashi}{article}{
   author={Kobayashi, Shinichi},
   title={The $p$-adic Gross-Zagier formula for elliptic curves at
   supersingular primes},
   journal={Invent. Math.},
   volume={191},
   date={2013},
   number={3},
   pages={527--629},
   issn={0020-9910},
   review={\MR{3020170}},
   doi={10.1007/s00222-012-0400-9},
}

\bib{koly}{article}{
   author={Kolyvagin, V. A.},
   title={Finiteness of $E({\bf Q})$ and SH$(E,{\bf Q})$ for a subclass of
   Weil curves},
   language={Russian},
   journal={Izv. Akad. Nauk SSSR Ser. Mat.},
   volume={52},
   date={1988},
   number={3},
   pages={522--540, 670--671},
   issn={0373-2436},
   translation={
      journal={Math. USSR-Izv.},
      volume={32},
      date={1989},
      number={3},
      pages={523--541},
      issn={0025-5726},
   },
   review={\MR{954295 (89m:11056)}},
}

\bib{lzz}{article}{
	author={Liu, Yifeng},
	author={Zhang, Shou-wu},
	author={Zhang, Wei},
	title={On $p$-adic Waldspurger formula},
	status={preprint}}

\bib{maz-SD}{article}{
   author={Mazur, B.},
   author={Swinnerton-Dyer, P.},
   title={Arithmetic of Weil curves},
   journal={Invent. Math.},
   volume={25},
   date={1974},
   pages={1--61},
   issn={0020-9910},
   review={\MR{0354674}},
}

\bib{mtt}{article}{
   author={Mazur, B.},
   author={Tate, J.},
   author={Teitelbaum, J.},
   title={On $p$-adic analogues of the conjectures of Birch and
   Swinnerton-Dyer},
   journal={Invent. Math.},
   volume={84},
   date={1986},
   number={1},
   pages={1--48},
   issn={0020-9910},
   review={\MR{830037 (87e:11076)}},
   doi={10.1007/BF01388731},
}

\bib{mok}{article}{
   author={Mok, Chung Pang},
   title={The exceptional zero conjecture for Hilbert modular forms},
   journal={Compos. Math.},
   volume={145},
   date={2009},
   number={1},
   pages={1--55},
   issn={0010-437X},
   review={\MR{2480494}},
   doi={10.1112/S0010437X08003813},
}

\bib{MB}{article}{
author={Molina Blanco, Santiago}, 
title={Anticyclotomic p-adic L-functions and the exceptional zero phenomenon},
status={preprint}
}

\bib{murty 2}{book}{
   author={Murty, M. Ram},
   author={Murty, V. Kumar},
   title={Non-vanishing of $L$-functions and applications},
   series={Progress in Mathematics},
   volume={157},
   publisher={Birkh\"auser Verlag, Basel},
   date={1997},
   pages={xii+196},
   isbn={3-7643-5801-7},
   review={\MR{1482805 (98h:11106)}},
   doi={10.1007/978-3-0348-0274-1},
}

\bib{nekheights}{article}{
   author={Nekov{\'a}{\v{r}}, Jan},
   title={On $p$-adic height pairings},
   conference={
      title={S\'eminaire de Th\'eorie des Nombres, Paris, 1990--91},
   },
   book={
      series={Progr. Math.},
      volume={108},
      publisher={Birkh\"auser Boston},
      place={Boston, MA},
   },
   date={1993},
   pages={127--202},
   review={\MR{1263527 (95j:11050)}},
}

\bib{nek-selmer}{article}{
   author={Nekov{\'a}{\v{r}}, Jan},
   title={Selmer complexes},
   language={English, with English and French summaries},
   journal={Ast\'erisque},
   number={310},
   date={2006},
   pages={viii+559},
   issn={0303-1179},
   isbn={978-2-85629-226-6},
   review={\MR{2333680 (2009c:11176)}},
}

\bib{plectic}{article}{
   author={Nekov{\'a}{\v{r}}, Jan},
   author={Scholl, Anthony},
   title={Introduction to plectic cohomology},
	status={preprint}}

\bib{ota}{article}{
   author={Ota, Kazuto},
   title={A generalization of the theory of Coleman power series},
   journal={Tohoku Math. J. (2)},
   volume={66},
   date={2014},
   number={3},
   pages={309--320},
   issn={0040-8735},
   review={\MR{3266735}},
   doi={10.2748/tmj/1412783201},
}

\bib{PR}{article}{
   author={Perrin-Riou, Bernadette},
   title={Points de Heegner et d\'eriv\'ees de fonctions $L$ $p$-adiques},
   language={French},
   journal={Invent. Math.},
   volume={89},
   date={1987},
   number={3},
   pages={455--510},
   issn={0020-9910},
   review={\MR{903381 (89d:11034)}},
   doi={10.1007/BF01388982},
}

\bib{PRL}{article}{
   author={Perrin-Riou, Bernadette},
   title={Fonctions $L$ $p$-adiques associ\'ees \`a une forme modulaire et
   \`a un corps quadratique imaginaire},
   language={French},
   journal={J. London Math. Soc. (2)},
   volume={38},
   date={1988},
   number={1},
   pages={1--32},
   issn={0024-6107},
   review={\MR{949078 (89m:11043)}},
   doi={10.1112/jlms/s2-38.1.1},
}
\bib{PR2}{article}{
   author={Perrin-Riou, Bernadette},
   title={Fonctions $L$ $p$-adiques, th\'eorie d'Iwasawa et points de
   Heegner},
   language={French, with English summary},
   journal={Bull. Soc. Math. France},
   volume={115},
   date={1987},
   number={4},
   pages={399--456},
   issn={0037-9484},
   review={\MR{928018 (89d:11094)}},
}

\bib{PRbei}{book}{
   author={Perrin-Riou, Bernadette},
   title={Fonctions $L$ $p$-adiques des repr\'esentations $p$-adiques},
   language={French, with English and French summaries},
   journal={Ast\'erisque},
   number={229},
   date={1995},
   pages={198},
   issn={0303-1179},
   review={\MR{1327803 (96e:11062)}},
}

\bib{rib-tak}{article}{
   author={Ribet, Kenneth A.},
   author={Takahashi, Shuzo},
   title={Parametrizations of elliptic curves by Shimura curves and by
   classical modular curves},
   note={Elliptic curves and modular forms (Washington, DC, 1996)},
   journal={Proc. Nat. Acad. Sci. U.S.A.},
   volume={94},
   date={1997},
   number={21},
   pages={11110--11114},
   issn={0027-8424},
   review={\MR{1491967 (99e:11080)}},
   doi={10.1073/pnas.94.21.11110},
}

\bib{schneider}{article}{
   author={Schneider, Peter},
   title={$p$-adic height pairings. I},
   journal={Invent. Math.},
   volume={69},
   date={1982},
   number={3},
   pages={401--409},
   issn={0020-9910},
   review={\MR{679765 (84e:14034)}},
   doi={10.1007/BF01389362},
}

\bib{SU}{article}{
   author={Skinner, Christopher},
   author={Urban, Eric},
   title={The Iwasawa Main Conjectures for $GL_2$},
   journal={Invent. Math.},
   volume={195},
   date={2014},
   number={1},
   pages={1--277},
   issn={0020-9910},
   review={\MR{3148103}},
   doi={10.1007/s00222-013-0448-1},
}

\bib{spiess}{article}{
   author={Spie{\ss}, Michael},
   title={On special zeros of $p$-adic $L$-functions of Hilbert modular
   forms},
   journal={Invent. Math.},
   volume={196},
   date={2014},
   number={1},
   pages={69--138},
   issn={0020-9910},
   review={\MR{3179573}},
   doi={10.1007/s00222-013-0465-0},
}

\bib{sprung}{article}{
   author={Sprung, Florian},
   title={A formulation for $p$-adic versions of the Birch and Swinnerton-Dyer conjectures in the supersingular case},
status={preprint}
}

\bib{takahashi}{article}{
   author={Takahashi, Shuzo},
   title={Degrees of parametrizations of elliptic curves by Shimura curves},
   journal={J. Number Theory},
   volume={90},
   date={2001},
   number={1},
   pages={74--88},
   issn={0022-314X},
   review={\MR{1850874 (2002h:11052)}},
   doi={10.1006/jnth.2000.2614},
}
\bib{tate-bsd}{article}{
   author={Tate, John},
   title={On the conjectures of Birch and Swinnerton-Dyer and a geometric
   analog},
   conference={
      title={S\'eminaire Bourbaki, Vol.\ 9},
   },
   book={
      publisher={Soc. Math. France},
      place={Paris},
   },
   date={1995},
   pages={Exp.\ No.\ 306, 415--440},
   review={\MR{1610977}},
}

\bib{VO}{article}{
   author={Van Order, Jeanine},
   title={On the quaternionic $p$-adic $L$-functions associated to Hilbert
   modular eigenforms},
   journal={Int. J. Number Theory},
   volume={8},
   date={2012},
   number={4},
   pages={1005--1039},
   issn={1793-0421},
   review={\MR{2926558}},
   doi={10.1142/S1793042112500601},
}
\bib{venerucci}{article}{
	author={Venerucci, Rodolfo},
	title={Exceptional zero formulae and a conjecture of Perrin-Riou},
	journal={Invent. Math.}, 
	status={to appear}}

\bib{wald}{article}{
   author={Waldspurger, J.-L.},
   title={Sur les valeurs de certaines fonctions $L$ automorphes en leur
   centre de sym\'etrie},
   language={French},
   journal={Compositio Math.},
   volume={54},
   date={1985},
   number={2},
   pages={173--242},
   issn={0010-437X},
   review={\MR{783511 (87g:11061b)}},
}
\bib{xin-howard}{article}{
	author={Wan, Xin}, 
	title={Heegner Point Kolyvagin System and Iwasawa Main Conjecture},
	status={preprint}
	}

\bib{yzz}{book}{
     title = {The Gross-Zagier Formula on Shimura Curves},  
     subtitle = {},     
     edition = {},       
     author = {Xinyi Yuan},author = { Shou-Wu Zhang},author = { Wei Zhang},
     editor = {},     
     volume = {184},     
     series = {Annals of Mathematics Studies},  
     pages = {272},         
     place={Princeton, NJ},
     date = {2012},      
     publisher = {Princeton University Press},         
     }

\end{biblist}
\end{bibdiv}

\end{document}